\newtheorem{theorem}{Theorem}[section]
\newtheorem{corollary}[theorem]{Corollary}
\newtheorem{lemma}[theorem]{Lemma}
\newtheorem{proposition}[theorem]{Proposition}
\newtheorem{letterthm}{Theorem}
\theoremstyle{definition}
\newtheorem{definition}[theorem]{Definition}
\newtheorem{construction}[theorem]{Construction}
\theoremstyle{remark}
\newtheorem{remark}[theorem]{Remark}
\newcommand{\cQ}{\mathcal{Q}}
\newcommand{\cN}{\mathcal{N}}
\newcommand{\cK}{\mathcal{K}}
\newcommand{\cJ}{\mathcal{J}}
\newcommand{\cB}{\mathcal{B}}
\newcommand{\cE}{\mathcal{E}}
\newcommand{\cG}{\mathcal{G}}
\newcommand{\cH}{\mathcal{H}}
\newcommand{\cR}{\mathcal{R}}
\newcommand{\cI}{\mathcal{I}}
\newcommand{\cS}{\mathcal{S}}
\newcommand{\cU}{\mathcal{U}}
\newcommand{\cZ}{\mathcal{Z}}
\newcommand{\cP}{\mathcal{P}}
\newcommand{\cV}{\mathcal{V}}
\newcommand{\C}{\mathbb{C}}
\newcommand{\actson}{\curvearrowright}
\newcommand{\innerproduct}[2]{\langle #1, #2 \rangle}
\DeclareMathOperator{\GI}{\mathcal{G}_{\mathcal{I}}}
\DeclareMathOperator{\Idem}{Idem}
\DeclareMathOperator{\phitilde}{\widetilde{\phi}}
\DeclareMathOperator{\dI}{d_{\mathcal{I}}}
\DeclareMathOperator{\id}{id}
\DeclareMathOperator{\Ad}{Ad}
\DeclareMathOperator{\Aut}{Aut}
\DeclareMathOperator{\Out}{Out}
\DeclareMathOperator{\Iso}{Iso}
\DeclareMathOperator{\Isot}{Isot}
\def\thanks#1{\protected@xdef\@thanks{\@thanks
        \protect\footnotetext{#1}}}
\begin{document}

\title{Measured inverse semigroups and their actions on von Neumann algebras and equivalence relations}
\author{
Soham Chakraborty \thanks{
\hspace{-2 em} \faMapMarker\hspace{0.18 em}: École Normale Supérieure, Paris, France \\ 
\Letter : \texttt{soham.chakraborty@ens.psl.eu}
}
} 
\date{\today}

\setlength{\parindent}{0em}

\maketitle 

\begin{abstract}\noindent
 It is known to experts that certain regular inclusions of von Neumann algebras arise as crossed products with cocycle actions of the canonical quotient groupoids associated with the inclusions. Similarly, `strongly normal' inclusions of standard equivalence relations arise as semi-direct products with cocycle actions of the quotient groupoids. However, to the author's knowledge, rigorous proofs of these results in full generality are absent in the literature. In this article, we exploit the usual correspondence between inverse semigroups and groupoids, and give a unified approach to proving these `folklore' results and fill this gap in the literature. 
\end{abstract}

\section{Introduction}

Suppose that $G$ is a discrete countable group, $H \mathrel{\unlhd} G$ is a normal subgroup, and let $Q \coloneqq G/H$. Then there are maps $\phi: Q \rightarrow \Aut(H)$ and $c: Q \times Q \rightarrow H$ that satisfy two conditions: 
\begin{enumerate}
    \item $\phi_{q} \phi_{r} = \Ad(c(q,r)) \circ \phi_{qr}$
    \item $c(q,r)c(qr, s)  = \phi_{q}(c(r,s)) c(q,rs)$.
\end{enumerate}
such that there is an isomorphism between the `twisted' semi-direct product $H \rtimes_{(\phi,c)} Q$ and $G$, preserving the subgroup $H$.  We call such a pair $(\phi,c)$ a cocycle action of $Q$ on $H$. If the centralizer $C_G(H)$ is contained in $H$ (and is thus equal to the center $\cZ(H))$, the induced map $Q \rightarrow \Out(H)$ is injective (the cocycle action is outer). One expects the same principle to work in an appropriate sense for `sufficiently normal' inclusions $B \subset M$ of von Neumann algebras. 

A regular subalgebra $B$ of a von Neumann algebra $M$ (with a separable predual) is one whose normalizer generates $M$. Suppose that $B' \cap M = \cZ(B)$ and there is a faithful normal conditional expectation $E: M \rightarrow B$. Then it turns out that there is, in fact such a  `quotient' object, which is a discrete measured groupoid and a 2-cocycle with values in the unitary group $\cU(B)$, such that the corresponding `free' cocycle action on $B$ yields the inclusion $B \subset M$ as the crossed product. This principle has already been used in the literature, for example in \cite{Popa-Shlyakhtenko-Vaes20} and \cite{Chakraborty24a} for classifying regular subalgebras of injective factors.    

The first rigorous treatment of this appears in \cite{Feldman-Moore-1} and \cite{Feldman-Moore-2} where the case where $B$ is abelian is treated. Such subalgebras are called Cartan subalgebras, and in such a situation the quotient groupoid is a countable measured equivalence relation $\cR$. In this case, the 2-cocycle $c$ takes values in the circle and the inclusion $B \subset M$ is isomorphic to the inclusion of $B$ into the twisted equivalence relation von Neumann algebra $L(\cR,c)$. On the other extreme, when $B$ is a factor, then in \cite{Choda79} it was shown that the quotient groupoid is simply the group $G = \cN_M (B)/ \cU(B)$ and  $B \subset M$ arises as the crossed product of a twisted outer $G$-action on $B$. Both Feldman-Moore and Choda's results are of immense importance to the field and have found many applications to date.

The purpose of this article is to provide a rigorous proof for the general case where $B \subset M$ is any regular inclusion with expectation and $B'\cap M = \cZ(B)$. To do this, we use ideas from \cite{DonsigEtAl21} and exploit an intermediate `quotient' object that we call a `measured inverse semigroup'. The set of partial isometries $v \in M$ that have source and range in $\cZ(B)$, normalize $B$ (i.e., $vBv* \subset B $ and $ v*Bv \subset B)$ and satisfy $vBv^* = Bvv^*$ form an inverse semigroup $\cP$. By identifying two such partial isometries with the same source and range, that differ by a partial isometry in $B$, we find that the quotient inverse semigroup $\cI$ has some natural completeness and separability properties (see Definitions \ref{Def: complete semigroups} and \ref{Def: separable inverse semigroups}). Moreover, the set of idempotents of this inverse semigroup forms a lattice that is isomorphic to the projection lattice in $\cZ(B)$. We axiomatize this and introduce the notion of `complete separable measured (csm)' inverse semigroups. In the special case of Cartan triples investigated in \cite{DonsigEtAl21}, these csm inverse semigroups were called `Clifford inverse monoids'.  

In Proposition \ref{Prop: full pseudogroup is an inverse semigroup}, we note that the `full pseudogroup' $[[\cG]]$ of a discrete measured groupoid $\cG$ (the inverse semigroup of partial Borel bisections up to measure zero) is a csm inverse semigroup. Our first main result is the converse to this, which lets us construct a canonical quotient groupoid from a regular inclusion. In Section \ref{Sec: The inverse semigroup - groupoid correspondence} we prove the following result: 
\medskip
\begin{letterthm}
\label{Theorem A}
     Let $\mathfrak{C}_{\text{groupoids}}$ be the category of discrete measured groupoids with groupoid homomorphisms as morphisms and let $\mathfrak{C}_{\text{csm}}$ be the category of csm inverse semigroups with lattice homomorphisms (Definition \ref{Def: Isomorphism of inverse semigroups}) as morphisms. Then the map $\mathfrak{F}: \mathfrak{C}_{\text{groupoids}} \rightarrow \mathfrak{C}_{\text{csm}}$ given by $\mathfrak{F}(\cG) = [[\cG]]$ is a functor that gives an equivalence of categories. 
\end{letterthm}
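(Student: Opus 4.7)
The plan is to prove the equivalence by constructing an explicit quasi-inverse functor $\mathfrak{G}: \mathfrak{C}_{\text{csm}} \to \mathfrak{C}_{\text{groupoids}}$ via a germ construction, and then exhibiting natural isomorphisms $\mathfrak{F} \circ \mathfrak{G} \cong \id$ and $\mathfrak{G} \circ \mathfrak{F} \cong \id$. This follows the Paterson-style universal groupoid strategy, but carried out in the measured category.

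First, on objects: given a csm inverse semigroup $\cI$ with idempotent lattice $E(\cI)$, the completeness and separability axioms should imply—via a Loomis--Sikorski / Maharam-style duality for measure algebras—that $E(\cI)$ is isomorphic to the measurable projection lattice of some standard probability space $(X, \mu)$. Each $s \in \cI$ acts on $E(\cI)$ by $e \mapsto ses^*$, so under this duality $s$ yields a partial Borel isomorphism $\phi_s$ from the measurable set corresponding to $s^*s$ to that corresponding to $ss^*$. I would then define $\cG(\cI)$ as the quotient of $\{(s,x) : s \in \cI,\ x \in \operatorname{dom}(\phi_s)\}$ by the germ equivalence $(s,x) \sim (t,x)$ iff there exists $e \le s^*s \wedge t^*t$ with $x$ in the set corresponding to $e$ and $se = te$; I set $\s([s,x]) = x$ and $r([s,x]) = \phi_s(x)$, with composition $[t, \phi_s(x)] \cdot [s,x] = [ts, x]$. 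Separability provides a countable generating family $\{s_n\} \subset \cI$ whose bisections cover $\cG(\cI)$ and equip it with a standard Borel structure, and the measure class is obtained by pushing forward $\mu$ along the source. On morphisms, a lattice homomorphism $\Phi:\cI \to \cJ$ induces a Borel map of unit spaces via Stone duality on idempotents and sends germs to germs functorially.

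Next I would verify the two natural isomorphisms. For $\mathfrak{F} \circ \mathfrak{G} \cong \id$, each $s \in \cI$ yields a canonical bisection $\{[s,x] : x \in \operatorname{dom}(\phi_s)\}$ of $\cG(\cI)$, and the germ construction is designed so that the resulting map $\cI \to [[\cG(\cI)]]$ is a lattice-preserving isomorphism of csm inverse semigroups (with the target being a csm inverse semigroup by Proposition \ref{Prop: full pseudogroup is an inverse semigroup}). For $\mathfrak{G} \circ \mathfrak{F} \cong \id$, given $\cG$, I would invoke a Lusin--Novikov-type uniformization to cover $\cG$ by countably many Borel bisections, so that every arrow $g \in \cG$ lies in some bisection $A$; the assignment $g \mapsto [[A], \s(g)]$ is independent of the choice of $A$ precisely because of the germ equivalence, and one checks it is a Borel isomorphism of measured groupoids.

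The main obstacle will be the measure-theoretic bookkeeping, not the algebraic structure. The two delicate points are: first, constructing a standard Borel groupoid structure on $\cG(\cI)$ from the purely abstract data of $\cI$ while showing it is independent (up to null sets) of the choice of countable generating family; second, tracking that every equality in a csm inverse semigroup is really a $\mu$-a.e.\ equality, so that the germ equivalence and the verification that $[[\cG]]$ is csm must be executed entirely in the measured category, invoking the completeness axiom (Definition \ref{Def: complete semigroups}) to glue compatible families of germs and the separability axiom (Definition \ref{Def: separable inverse semigroups}) at the point where countable generation is needed. Once this measure-theoretic skeleton is set up, the remaining checks reduce to the standard inverse-semigroup-to-étale-groupoid correspondence.
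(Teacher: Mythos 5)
Your proposal is correct and follows essentially the same route as the paper: Proposition \ref{Prop: full pseudogroup is an inverse semigroup} is your verification that $[[\cG]]$ is csm, the germ groupoid $\cG(\cI)$ built from a countable generating family is exactly the construction in Proposition \ref{Prop: from inverse semigroup to a discrete measured groupoid} (where the disjointification $w_n = v_n e^c$ plays the role of passing to germs, and the isomorphism $\gamma:\cI\to[[\GI]]$ is your unit $\mathfrak{F}\circ\mathfrak{G}\cong\id$), and your bisection-covering argument for $\mathfrak{G}\circ\mathfrak{F}\cong\id$ corresponds to Proposition \ref{Prop: Groupoid to inverse semigroup is injective}. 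One small correction: the identification of $\Idem(\cI)$ with $\cP(L^{\infty}(X,\mu))$ is an axiom of Definition \ref{Def: measured semigroups} (part of being ``measured''), not something to be derived from completeness and separability via Loomis--Sikorski duality.
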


For topological groupoids, such a correspondence with inverse semigroups is well known, and appears for example in \cite[Chapter 4]{Paterson1998GroupoidsIS}. The discrete measured groupoid associated to the csm inverse semigroup $\cI$ as above is called the `quotient groupoid' associated the inclusion $B \subset M$. In Section \ref{Sec: Cocycle actions and regular subalgebras} we formally define the notion of actions of csm inverse semigroups on von Neumann algebras. The natural action of $\cP$ on $B$ does not lift to an action of $\cI$ and one gets a 2-cocycle and a twisted action of $\cI$ on $B$. Now using Theorem $\ref{Theorem A}$, we can lift this to a cocycle action of the quotient groupoid. The main result of Section \ref{Sec: Cocycle actions and regular subalgebras} is the following generalization of \cite[Theorem 4]{Choda79}.
\newpage
\begin{letterthm}
\label{Theorem B}
    Let $M$ be a von Neumann algebra and $B \subset M$ be a subalgebra with a faithful normal conditional expectation $E: M \rightarrow B$. Then there is a quotient discrete measured groupoid $\cG$ and a free cocycle action $(\alpha,c)$ of $\cG$ on $B$ such that the inclusions $B \subset M$ and $B \subset B \rtimes_{(\alpha,c)} \cG$ are isomorphic and the isomorphism intertwines the conditional expectations if and only if $B$ is regular in $M$ and $B'\cap M = \cZ(B)$.    
\end{letterthm}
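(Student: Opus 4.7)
The plan is to prove the two implications separately. The forward direction (regular inclusion with $B'\cap M = \cZ(B)$ $\Rightarrow$ crossed product structure) is the substantive content and proceeds by distilling the quotient inverse semigroup from the inclusion, invoking Theorem~\ref{Theorem A} to pass to a groupoid, and then constructing the cocycle action; the converse direction is essentially a verification.

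For the forward direction, I would first check that the set $\cP \subset M$ of partial isometries $v$ satisfying $v^*v, vv^* \in \cZ(B)$, $vBv^* \subset B$, $v^*Bv \subset B$, and $vBv^* = Bvv^*$ forms an inverse semigroup under the multiplication of $M$, with source and range projections living in the projection lattice of $\cZ(B)$. Introducing the equivalence $v \sim w$ iff $v^*v = w^*w$, $vv^* = ww^*$, and $vw^* \in B$, I would then verify that $\cI := \cP/\sim$ is a complete separable measured inverse semigroup whose idempotent lattice is isomorphic to the projection lattice of $\cZ(B)$. Completeness uses normality of $E$ together with standard bounded-supremum arguments inside $M$, while separability follows from separability of the predual of $M$. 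Theorem~\ref{Theorem A} then yields a discrete measured groupoid $\cG$ with $[[\cG]] \cong \cI$, and this is the sought quotient groupoid.

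With $\cG$ in hand, I would produce the cocycle action by choosing a Borel cross-section $\sigma : \cI \to \cP$, available by measurable selection on the standard Borel structure underlying $\cI$. The assignment $\alpha_{[v]}(b) := \sigma([v])\, b\, \sigma([v])^*$ defines an isomorphism between appropriate corners of $B$, and the failure of $\sigma$ to be multiplicative determines a 2-cocycle $c$ with values in the unitary group of $B$ via $\sigma([v])\sigma([w]) = c([v],[w])\, \sigma([vw])$. Associativity of the product in $\cP$ gives the 2-cocycle identity, and transferring along the equivalence of Theorem~\ref{Theorem A} produces a cocycle action of $\cG$ on $B$. Freeness of this action is equivalent to the hypothesis $B' \cap M = \cZ(B)$: a nontrivial $\alpha_{[v]}$-fixed element for non-unit $[v]$ would, together with $\sigma([v])$, produce an element of $B' \cap M \setminus \cZ(B)$.

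The main obstacle will be showing that the natural $B$-bilinear map $\Phi : B \rtimes_{(\alpha,c)} \cG \to M$ sending canonical generators to their chosen representatives $\sigma([v])$ is an isomorphism intertwining the conditional expectations. Surjectivity uses regularity of $B \subset M$ decisively: following the Fourier-type arguments of Feldman--Moore and Choda, every $x \in M$ must be recovered from its generalized Fourier coefficients $E(x\, \sigma([v])^*)$ supported on a countable family of representatives, and the required convergence on the crossed product side must be established carefully via normality of $E$. Injectivity follows from faithfulness of $E$ together with uniqueness of Fourier coefficients in the crossed product, which in turn uses freeness of the action. Compatibility with the conditional expectations is built into the construction, as both are determined by their behavior on the unit slice. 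For the converse direction, one verifies directly that in any crossed product by a free cocycle action of a discrete measured groupoid, the canonical generators are normalizing partial isometries with source and range in $\cZ(B)$ which together with $B$ generate $M$, and freeness forces $B' \cap M \subset \cZ(B)$ by a direct Fourier computation.
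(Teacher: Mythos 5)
Your proposal is correct and follows essentially the same route as the paper: form the inverse semigroup $\cP$ of normalizing partial isometries with central source and range, quotient by the unitary-in-$B$ equivalence to get a csm inverse semigroup, apply Theorem~\ref{Theorem A} to obtain the quotient groupoid, lift a (non-multiplicative) cross-section to a $2$-cocycle action whose freeness comes from $B'\cap M=\cZ(B)$, and identify $M$ with the crossed product via Fourier/GNS arguments, with the converse being the verification that crossed products by free cocycle actions are regular with the right relative commutant. The only cosmetic differences are that the paper implements the final isomorphism by a unitary between GNS spaces rather than a $B$-bilinear map on generators, and obtains separability of the quotient semigroup from a maximal family orthogonal under $E$ in the GNS space, which is the precise form of your appeal to separability of the predual.
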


Similarly for measured equivalence relations, an analogous notion of `strongly normal' subequivalence relations was introduced in \cite{feld-suth-zimm-88}. Indeed a subequivalence relation $\cS$ of $\cR$ turns out to be strongly normal if and only if $L(\cS)$ is a regular subalgebra of $L(\cR)$. In this setting as well, there is a quotient groupoid $\cG$ acting with a 2-cocycle on $\cS$ such that semi-direct product is $\cR$. In fact the quotient groupoid of the inclusion $\cS \subset \cR$ is isomorphic to the quotient groupoid of $L(\cS) \subset L(\cR)$. Essentially using the same approach as von Neumann algebras and appealing once again to Theorem \ref{Theorem A}, we prove the following result in Section \ref{Sec: quotients of equivalence relations}. 
\medskip
\begin{letterthm}
\label{Theorem C} \textup{(see also \cite[Theorem 2.2]{feld-suth-zimm-89})}
    Let $\cR$ be an ergodic equivalence relation on $(X,\mu)$ and $\cS \subset \cR$ be a subequivalence relation. Then there is a quotient groupoid $\cG$ and a free cocycle action $(\alpha, \Phi)$ of $\cG$ on $\cS$ such that the inclusion $\cS \subset \cR$ is isomorphic to $\cR \subset \cS \rtimes_{(\alpha,\Phi)} \cG$ where $\cS \rtimes_{(\alpha,\Phi)} \cG$ is the semi-direct product equivalence relation if and only if $\cS$ is strongly normal in $\cR$ (see Section \ref{Sec: quotients of equivalence relations} for the relevant definitions).   
\end{letterthm}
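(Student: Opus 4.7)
The plan is to mimic the proof of Theorem~\ref{Theorem B} at the level of equivalence relations, using Theorem~\ref{Theorem A} as the bridge between inverse semigroups and groupoids. The role played by the normalizer $\cN_M(B)$ in the von Neumann algebra setting will now be played by the inverse semigroup of partial isomorphisms of $\cR$ that normalize $\cS$, and the strong normality hypothesis is exactly what will give separability of the resulting quotient.

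For the ``if'' direction, I would first let $\cP$ be the inverse semigroup of partial Borel isomorphisms $\phi \colon A \to B$ whose graph lies in $\cR$, such that $A, B$ are $\cS$-saturated and $\phi \cdot \cS|_A \cdot \phi^{-1} = \cS|_B$; composition makes $\cP$ an inverse semigroup whose idempotent semilattice is the lattice of $\cS$-saturated Borel subsets of $X$ modulo null sets. Passing to the quotient $\cI := \cP/{\sim}$, where $\phi \sim \psi$ iff they share source and range and $\phi\psi^{-1} \in [[\cS]]$, I would verify that $\cI$ satisfies Definitions~\ref{Def: complete semigroups} and \ref{Def: separable inverse semigroups}, with separability being precisely where strong normality enters (via a countable family in $\cP$ whose graphs cover $\cR$). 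Theorem~\ref{Theorem A} then yields a canonical discrete measured groupoid $\cG$ with $[[\cG]] \cong \cI$, which will be the quotient groupoid.

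To build the cocycle action $(\alpha, \Phi)$, I fix a measurable section $g \mapsto \tilde g$ of $\cP \to \cI \cong [[\cG]]$ and define $\alpha_g \colon \cS|_{s(g)} \to \cS|_{r(g)}$ by conjugation by $\tilde g$; this descends to a well-defined action on $\cS$ because two $\sim$-equivalent lifts differ by an element of $[[\cS]]$ and hence induce the same map modulo the inner action of $\cS$. Setting $\Phi(g,h) := \widetilde{gh}^{-1}\tilde g \tilde h \in [[\cS]]$ gives the 2-cocycle, with the cocycle identity being associativity in $\cP$. Freeness of $(\alpha, \Phi)$ reflects the fact that any nontrivial element of $\cI$ represents a partial isomorphism of $\cR$ that is not itself in $[[\cS]]$. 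The map $(\sigma, g) \mapsto \tilde g \circ \sigma$ is then the desired isomorphism $\cS \rtimes_{(\alpha, \Phi)} \cG \to \cR$, restricting to the identity on $\cS$.

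For the converse, the semi-direct product description immediately provides partial isomorphisms $\tilde g$ of $\cR$ normalizing $\cS$ with $\cS$-saturated source and range, and since $\cG$ is a discrete measured groupoid a countable family of such elements already covers $\cR$, proving strong normality. The main technical hurdle I anticipate is the measurable selection $g \mapsto \tilde g$ together with the null-set bookkeeping needed to upgrade almost-everywhere identities into honest groupoid-level cocycle relations; these can be handled by standard Borel selection theorems, as in Theorem~\ref{Theorem B}. The conceptual content of the theorem is entirely absorbed into Theorem~\ref{Theorem A}, and Theorem~\ref{Theorem C} becomes a matter of setting up the correct inverse semigroup and checking the compatibilities.
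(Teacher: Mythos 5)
Your construction of the quotient object is exactly the paper's: Construction \ref{Construction: quotient inverse semigroup and groupoid} builds the same inverse semigroup $\cP$ of partial automorphisms of $\cS$ with graph in $\cR$ and saturated domain/range, quotients by $[[\cS]]$, uses strong normality precisely for separability, and feeds the resulting csm inverse semigroup into Theorem \ref{Theorem A}; the cocycle action via a lift $\cI \to \cP$ with $\Phi$ measuring the failure of multiplicativity is also the paper's argument in Theorem \ref{Thm: strong normailty correspondence with cocycle actions}. Where you genuinely diverge is the endgame. You propose to verify directly that the semi-direct product relation coincides with $\cR$ (containment one way because lifts have graphs in $\cR$, the other way by strong normality), and to get the converse by observing that the bisections of $\cG$ supply a countable family normalizing $\cS$ whose graphs cover $\cR$. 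The paper instead routes both steps through the von Neumann algebra picture: Proposition \ref{Prop: quotient semigroup same as regular inclusion semigroup} identifies $\cI_{\cS\subset\cR}$ with $\cI_{L(\cS)\subset L(\cR)}$, Proposition \ref{Prop: semi-direct product of equivalence relation is crossed product of vNa's} and Theorem \ref{Thm: groupoid from regular inclusion} give an isomorphism of Cartan triples, and the uniqueness part of Feldman--Moore descends this to the relations; the converse is Corollary \ref{corollary:strong normality in semi-direct product} via Proposition \ref{Prop: Strongly normal subequivalence relation iff regular subalgbra}. Your route is more elementary and avoids Feldman--Moore, at the cost of not simultaneously establishing the compatibility $\cG_{\cS\subset\cR}\cong\cG_{L(\cS)\subset L(\cR)}$, which is part of what the paper wants. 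Two points in your sketch need more care. First, freeness is not just the statement that a nontrivial class in $\cI$ is not in $[[\cS]]$: if $g$ is an isotropy element and $E=\{y : (y,\tilde g(y))\in\cS\}$ has positive measure, you must show $E$ is $\cS$-saturated and $\tilde g$-invariant (which follows since $\tilde g$ normalizes $\cS$), so that $\tilde g$ restricted to $E$ lies in $[[\cS]]$ and hence $ge$ is an idempotent in $\cI$, contradicting nontriviality. Second, Definition \ref{Def: strong normality} asks for \emph{global} elements of $\Aut_\cR(\cS)$, whereas bisections of $\cG$ only give partial isomorphisms; you need ergodicity of $\cR$ to extend these to global normalizing automorphisms (or to invoke the equivalent reformulation in Remark \ref{Remark: strong normality is enough on a countable collection}). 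Neither gap is fatal, but both should be filled.
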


A `similar' construction of a quotient groupoid when the inclusion is normal (and not necessarily strongly normal) already appears in \cite[Theorem 2.2]{feld-suth-zimm-89}. When $\cR$ is ergodic, normality and strong normality coincides but in general strong normality is strictly stronger. 

In the proofs of Theorem \ref{Theorem B} and Theorem \ref{Theorem C}, one has to make measurable choices (of unitaries, isomorphisms, partial isometries, etc) many times. Although morally, the Jankov-von Neumann measurable selection theorem always allows one to do this, in practice it is often tedious to achieve this rigorously. Recently, a unified way to look at measured fields of separable structures (Polish groups, Polish spaces, Hilbert spaces, von Neumann algebras, etc) was developed in \cite{WoutersVaes24}. Their results and methods allow us to make a lot of our definitions (cocycle actions, cocycle conjugacy, etc) and proofs in the context of actions of measured groupoids completely rigorous. The results of this article appear in the author's PhD thesis \cite{thesis}. As mentioned in the abstract, the existence of the quotient groupoid from inclusions of von Neumann algebras or equivalence relations was known to experts and has been used in the literature. Therefore, parts of this article can be considered expository. 

\textbf{Acknowledgement:} The results of this article were obtained when the author was a PhD student at KU Leuven. He was supported by the FWO research project G090420N of the Research Foundation Flanders during his PhD. The author would like to thank his PhD supervisor Stefaan Vaes for introducing him to these topics and for many helpful discussions. The author is also indebted to the members of his PhD examination committee: Anna Duwenig, Amine Marrkachi, Gábor Szabó and Marco Zambon for their comments and feedback on his PhD thesis. The author is currently supported by the ERC advanced grant 101141693.

\section{Preliminaries}
\subsection{Von Neumann algebras}

Recall that a von Neumann algebra $M$ is a strong operator topology (SOT)-closed *-subalgebra of $\cB(\cH)$ for a Hilbert space $\cH$. For our purposes, $M$ will always have a separable predual, or equivalently, we will always assume $\cH$ to be separable. A von Neumann algebra $M$ is called \textit{abelian} if any two elements commute, and on the other extreme, is called a factor if $\cZ(M) = \C \cdot 1$. The following is the main class of von Neumann subalgebras we will investigate in this article: 

\begin{definition}
    Let $M$ be a von Neumann algebra and $B \subset M$ be a *-subalgebra. Then $B$ is called a \textit{regular subalgebra} if the normalizer $\cN_M(B)$ generates $M$ as a von Neumann algebra. Such a pair $B \subset M$ is called a regular inclusion. A *-subalgebra $B \subset M$ is said to satisfy the \textit{relative commutant condition} if $B'\cap M = \cZ(B)$.  
\end{definition}

Notice that when $M$ is non-trivial, $\C$ is a regular subalgebra but does not satisfy the relative commutant condition. If $M$ and $N$ are isomorphic von Neumann algebras, we shall denote by $\Iso(M,N)$ the Polish space of all isomorphisms between $M$ and $N$ with respect to the topology of pointwise ultraweak convergence. The group of unitaries of $M$ will be denoted by $\cU(M)$ and forms a Polish group with the SOT topology. 

We shall often deal with Borel and measured fields of von Neumann algebras and other separable structures (Polish groups, Polish spaces, Hilbert spaces, etc) in this article. Since the definitions are standard, we do not include them here. For a unified framework and rigorous definitions we refer the reader to \cite{WoutersVaes24}. In general as in \cite{WoutersVaes24} we call a family of some separable structures $(B_x)_{x \in X}$ over a standard measure space $(X,\mu)$ a \textit{measured field of separable structures} if there is a co-null subset $X_0$ such that $(B_x)_{x \in X_0}$ is a Borel field of the separable structures. The conull subset $X_0$ will sometimes be called a \textit{Borel domain}. We shall often pick measurable sections of such measured fields by applying the Jankov-von Neumann measurable selection theorem (see \cite[Theorem 18.1]{Kec95}).   

Factors are building blocks of von Neumann algebras. A von Neumann algebra $B$ always admits an \textit{ergodic decomposition}: if $(Z,\eta)$ is a standard measure space such that $\cZ(B) = L^\infty(Z,\eta)$, then there is a measured field of factors $(B_z)_{z \in Z}$ such that the direct integral is isomorphic to $B$. We refer the reader to \cite[Chapter X]{TakesakiVolume2} for more details.   
\medskip
\begin{definition}
    A von Neumann subalgebra $A \subset M$ is called a Cartan subalgebra if $A$ if $A$ is a maximal abelian regular subalgebra of $M$ and if there exists a faithful normal conditional expectation $E: M \rightarrow A$.  
\end{definition}

In the next subsection we discuss how Cartan subalgebras always arise from Borel equivalence relations. A von Neumann algebra $M \subset \cB(\cH)$ is called \textit{injective} if there is a faithful conditional expectation $E: \cB(\cH) \rightarrow M$. By the classification theory of Connes and Haagerup (\cite{Connes76}, \cite{Haagerup87}), a von Neumann algebra is injective if and only if it is \textit{approximately finite dimensional (AFD)}, i.e., it is an increasing union of finite dimensional subalgebras. 

\subsection{Measured groupoids and equivalence relations}

A \textit{discrete Borel groupoid} is an algebraic groupoid which is a standard Borel space $\cG$ such that the unit space $\cG^{(0)}$ is a Borel subset and the source map $s: \cG \rightarrow \cG^{(0)}$, the target map $t: \cG \rightarrow \cG^{(0)}$, the inverse map $i: \cG \rightarrow \cG$ and the composition map $\circ: \cG^{(2)} \rightarrow \cG$ are all countable-to-one Borel maps. Here $\cG^{(2)}$ denotes the Borel subset of all composable pairs in $\cG \times \cG$. We shall denote by $\cG^x$ the countable set $\{g \in \cG \; | \; t(g) = x\}$, by $\cG_x$ the countable set $\{g \in \cG \; | \; s(g) = x\}$ and by $\cG^x_y$ the countable set $\cG^x \cap \cG_y$. For all $x$, the countable group $\cG^x_x$ is called the isotropy group at $x$, and $(\cG^x_x)_{x \in X}$ is a measured field of groups, which is called the \textit{isotropy subgroupoid} and will be denoted by $\Isot(\cG)$. If $\Isot(\cG) = \cG^{(0)}$ then $\cG$ is a countable Borel equivalence relation. 

Given a Borel probability measure $\mu$ on $\cG^{(0)}$, one can define the \textit{source} and \textit{target} measures on $\cG$ by: 
\begin{align*}
    \mu_s(E) = \int_{\cG^{(0)}} \#\{s^{-1}(x) \cap E\} \; d\mu(x) \text{ and } \mu_t(E) = \int_{\cG^{(0)}} \#\{t^{-1}(x) \cap E\} \; d\mu(x)
\end{align*}
A discrete Borel groupoid is called \textit{discrete measured} if $\mu_s$ and $\mu_t$ are equivalent measures (have the same null sets). If $\mu_s = \mu_t$, then the groupoid is called \textit{probability measure preserving (pmp)}. Again if $\Isot(\cG)= \cG^{(0)}$, then $\cG$ is a countable measured equivalence relation on the standard probability space $(\cG^{(0)},\mu)$. A Borel subset $B \subset \cG$ is called a \textit{bisection} if $s$ and $t$ are injective on $B$. A bisection is called a \textit{global bisection} if $s$ and $t$ are also surjective onto $\cG^{(0)}$. The group of all global bisections (identified up to measure zero) is called the \textit{full group} of $\cG$ and denoted by $[\cG]$. It forms a Polish group with the metric given by $d(U,V) = \mu_s(U \Delta V)$. The inverse semigroup of all bisections is called the \textit{full pseudogroup} of $\cG$ and denoted by $[[\cG]]$. 

Now suppose $\cR$ is a countable measured equivalence relation on a standard probability space $(X,\mu)$ (this really means that $X$ is the unit space $\cR^{(0)}$ of the groupoid $\cR$). We denote by $\Aut(\cR)$ the Polish group of automorphisms of $\cR$, i,e., all nonsingular automorphisms $\theta$ of $(X,\mu)$ such that $(x,y) \in \cR$ if and only if $(\theta(x),\theta(y)) \in \cR$. 

The theory of Feldman and Moore (\cite{Feldman-Moore-1}, \cite{Feldman-Moore-2}) associates to every such countable measured equivalence relation $\cR$, a von Neumann algebra $L(\cR)$ such that the unit space $L^{\infty}(X,\mu)$ is a Cartan subalgebra. In fact every Cartan pair $A \subset M$ arises as the von Neumann algebra of an equivalence relation with a cocycle twist. We do not go into details here and refer the reader to \cite{Feldman-Moore-2}. This association $\cR \mapsto L(\cR)$ also gives canonical inclusions $\Aut(\cR) \hookrightarrow \Aut(L(\cR))$ and $[\cR] \hookrightarrow \cU(L(\cR))$. An equivalence relation $\cR$ is called \textit{hyperfinite} if $\cR$ is an increasing union of equivalence relations with finite orbits. An equivalence relation $\cR$ is called ergodic if every $\cR$-invariant Borel subset of $X$ is either null or conull. As one would expect, $\cR$ is hyperfinite if and only if $L(\cR)$ is injective, and $\cR$ is ergodic if and only if $L(\cR)$ is a factor. 

Every discrete measured groupoid $\cG$ has an associated countable measured equivalence relation $\cR$ given by $\{(t(g),s(g)) \; | \; g \in \cG\}$ on $(\cG^{(0)}, \mu)$. The groupoid $\cG$ is called ergodic if the associated equivalence relation $\cR$ is ergodic. Just like the equivalence relation case, one can define the von Neumann algebra $L(\cG)$ associated to a discrete measured groupoid $\cG$, which was first constructed by Hahn in \cite{Hahn78}. The groupoid von Neumann algebra $L(\cG)$ is a factor if and only if $\cG$ is ergodic and satisfies the \textit{infinite conjugacy class (icc)} condition (see \cite{ber-chak-don-kim}).  

We also briefly recall the notion of a basis of a groupoid from \cite{ber-chak-don-kim}. A countable subset $\cB = \{B_0,B_1,B_2,...\}$ of $[[\cG]]$ for a discrete measured groupoid $\cG$ is called a \textit{basis} for $\cG$ if the elements of $\cB$ are mutually disjoint up to measure zero and $\bigsqcup_{n} B_n = \cG$ up to measure zero. As a matter of convention we always assume $B_0 = \cG^{(0)}$. A basis is called \textit{symmetric} if $B \in \cB$ if and only if $B^{-1} \in \cB$. As one would expect, the characteristic functions $\{1_B \; | \; B \in \cB\}$ forms a basis for the Hilbert space $L^2(\cG,\mu_s)$, thus justifying the terminology. We refer the reader to \cite{ber-chak-don-kim} for more details.

\subsection{Measured inverse semigroups}
In this section we introduce the notion of measured inverse semigroups. A partially ordered set $X$ with a partial order is called a \textit{meet-semilattice} if it has a meet (or infimum) for any non-empty finite subset. Similarly, such a set is called a \textit{join-semilattice} if it has a join (or supremum) for any non-empty finite subset. Such a set is called a lattice, if it is both a meet-semilattice and a join-semilattice with the same partial order. 

Now let $(X, \mu)$ be a standard probability space and consider the abelian von Neumann algebra $A = L^{\infty}(X,\mu)$. Let $\mathcal{P}(A)$ be the set of all projections in $A$. Notice that $\mathcal{P}(A)$ has the structure of a lattice. Indeed for a countable set of projections $1_{E_{n}}$ with Borel subsets $E_{n}$, we can define their supremum by $1_{E}$ where $E = \bigcup_{n} E_{n}$. Similarly for finitely many projections $1_{E_{1}},...,1_{E_{k}}$, we can define their infimum as $1_{E}$ where $E = \bigcap_{i=1}^{k}E_{i}$. Of course, the intersection can be a null set in which case the infimum is 0. We shall now define the abstract notion of inverse semigroups, where the set of idempotents will resemble the space of projections of an abelian von Neumann algebra. 
\medskip
\begin{definition}
    \label{Def: inverse semigroup}
    An \textit{inverse semigroup} $\mathcal{I}$ is a set together with an associative binary operation, such that every non-zero element $x \in \mathcal{I}$ has a unique inverse $y \in \mathcal{I}$ in the sense that $x = xyx$ and $y = yxy$. In this case we denote $y$ by $x^{-1}$. We shall always assume that there exists the zero element $0 \in \mathcal{I}$. The set of \textit{idempotents} $\Idem(\cI) \subset \mathcal{I}$ is defined to be the set $\{e \in \mathcal{I} \; | \; e^{2} = e\}$.
\end{definition}

Notice that for every idempotent $e \in \Idem(\cI)$, we have $e^{-1} = e$. Moreover, notice that for any $x \in \mathcal{I}$, by uniqueness of inverses, we have that $(x^{-1})^{-1} = x$. In the next lemma, we record some more facts about abstract inverse semigroups. The following lemma is a collection of some easy facts about inverse semigroups, and we omit the proof.  
\medskip 
\begin{lemma}
\label{Lemma: facts about inverse semigroups}
    Let $\mathcal{I}$ be an inverse semigroup with set of idempotents $\Idem(\cI)$. 
    \begin{enumerate} 
        \item For any $x \in \mathcal{I}$, $(x^{-1})^{-1} = x$. In particular, for $e \in \Idem(\cI)$, $e^{-1} = e$ 
        \item The set $\Idem(\cI)$ is commutative and closed under multiplication
        \item For $x,y \in \mathcal{I}$, we have $(xy)^{-1} = y^{-1}x^{-1}$.
        \item If $e \in \Idem(\cI)$ and $x \in \cI$, then $xex^{-1} \in \Idem(\cI)$.
    \end{enumerate}
\end{lemma}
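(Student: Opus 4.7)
The plan is to establish the four items in order, with the bulk of the work going into (2); items (1), (3), and (4) are short verifications using uniqueness of inverses together with (2). For (1), the defining relations $x = xyx$ and $y = yxy$ for $y = x^{-1}$ are symmetric in $x$ and $y$, so $x$ itself satisfies the inverse relations for $y$, and uniqueness gives $(x^{-1})^{-1} = x$. For $e \in \Idem(\cI)$, one has $e = e \cdot e \cdot e$, so $e$ itself is an inverse of $e$ and hence $e^{-1} = e$ by uniqueness.

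The heart of the lemma is (2). The strategy is to show first that $x := (ef)^{-1}$ is itself an idempotent, which then forces $ef = x$ to be idempotent via (1), and to extract commutativity afterwards from the explicit formula $x = fxe$. Fix $e,f \in \Idem(\cI)$ with $ef \neq 0$, so that $ef \cdot x \cdot ef = ef$ and $x \cdot ef \cdot x = x$. Using $e^2 = e$ and $f^2 = f$, one computes
\[
(ef)(fxe)(ef) = efxef = ef, \qquad (fxe)(ef)(fxe) = f(xefx)e = fxe,
\]
so $fxe$ is also an inverse of $ef$, and uniqueness gives $x = fxe$. Next,
\[
(fxe)^2 = fx(ef)xe = f(xefx)e = fxe,
\]
so $x$ is itself an idempotent. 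Since $x$ is idempotent we have $x^{-1} = x$ by (1), while $(x^{-1})^{-1} = x$ combined with $x = (ef)^{-1}$ gives $x^{-1} = ef$; hence $ef = x \in \Idem(\cI)$, which is closure under multiplication. Running the same argument with $e$ and $f$ swapped shows $fe$ is idempotent, whence
\[
ef = x = fxe = f(ef)e = (fe)(fe) = fe,
\]
the claimed commutativity. The degenerate case $ef = 0$ is trivial: $0$ is idempotent, and $fe = 0$ follows by applying the symmetric argument.

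Parts (3) and (4) then follow by short computations that use (2). For (3), the factors $yy^{-1}$ and $x^{-1}x$ are idempotents, since $(yy^{-1})^2 = y(y^{-1}yy^{-1}) = yy^{-1}$ and similarly for $x^{-1}x$; by (2) they commute, so
\[
xy \cdot y^{-1}x^{-1} \cdot xy = x(yy^{-1})(x^{-1}x)y = x(x^{-1}x)(yy^{-1})y = xy,
\]
and the symmetric computation gives $y^{-1}x^{-1} \cdot xy \cdot y^{-1}x^{-1} = y^{-1}x^{-1}$. Uniqueness then yields $(xy)^{-1} = y^{-1}x^{-1}$. For (4), $(xex^{-1})^2 = xe(x^{-1}x)ex^{-1} = x(x^{-1}x)e^2 x^{-1} = xex^{-1}$, again using the commuting idempotents $e$ and $x^{-1}x$. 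The one genuine obstacle is (2): regularity alone does not force idempotents to commute, and the cleverness lies in recognising that $fxe$ is also an inverse of $ef$, which forces $x = (ef)^{-1}$ itself to be idempotent and thereby unlocks everything else.
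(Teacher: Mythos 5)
Your proof is correct. The paper explicitly omits the proof of this lemma (describing the items as easy facts), so there is no argument in the text to compare against; your treatment of item (2) --- showing that $fxe$ is a second inverse of $ef$ for $x=(ef)^{-1}$, concluding by uniqueness that $x=fxe$ is idempotent and hence that $ef=x^{-1}=x$ is idempotent, and then extracting commutativity from $ef = f(ef)e = (fe)^2 = fe$ --- is the standard argument, and your deductions of (1), (3) and (4) from it (using that $x^{-1}x$ and $yy^{-1}$ are commuting idempotents) are all valid.
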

\medskip
Now let $\mathcal{I}$ be an inverse semigroup with the set of idempotents $\Idem(\cI)$. We have the following natural partial order on $\Idem(\cI)$: namely $e \leq f$ iff $ef = e$. The partial order can in fact be extended to the entire semigroup $\mathcal{I}$ by imposing $x \leq y$ iff there exists $e \in \Idem(\cI)$ such that $x = ey$. In this case, writing $f = y^{-1}ey$, we see that $yf = ey = x$, so equivalently $x = yf$ for the idempotent $f$. For a nonempty finite set of elements $e_{1},e_{2},...,e_{n} \in \Idem(\cI)$, we have an infimum with respect to the partial order defined by $e_{1} \bigwedge e_{2} \bigwedge ...\bigwedge e_{n} = e_{1}e_{2}...e_{n}$. This turns $(\Idem(\cI),\bigwedge)$ into a meet-semilattice. If furthermore for any nonempty finite set $e_{1},...,e_{n} \in \Idem(\cI)$, there exists a unique supremum denoted by $u = e_{1}\bigvee e_{2} \bigvee...\bigvee e_{n} $, that satisfies $ue_{i} = e_{i}u =e_{i}$ for all $1 \leq i \leq n$, then the idempotents also form a join-semilattice and in that case we say that $(\Idem(\cI),\bigvee,\bigwedge)$ is the textit{lattice of idempotents} of $\cI$.  

Now we introduce the notion of orthogonality. For two idempotents $e,f \in \Idem(\cI)$, we say that $e \perp f$ if $ef=0$. Similarly for two general elements $x,y \in \mathcal{I}$, we say $x \perp y$ if $xx^{-1} \perp yy^{-1}$ and $x^{-1}x \perp y^{-1}y$. Suppose for every pair $x,y \in \mathcal{I}$ with $x \perp y$, there exists a unique supremum $u \in \mathcal{I}$ satisfying: 
\begin{equation}
    ux^{-1}x = x, \; xx^{-1}u = x, \; uy^{-1}y = y, \; yy^{-1}u = y \nonumber
\end{equation}
then as before, we call the element $u$ the \textit{join} $x \bigvee y$ of the pair. Recall now that from the discussion in the beginning of this subsection that in the space of projections $\mathcal{P}(A)$ of an abelian von Neumann algebra $A = L^{\infty}(X,\mu)$, there is a natural lattice structure with multiplication acting as the meet and projection to the union of sets acting as the join. 

We call a map $\phi$ between two lattices $(\mathcal{E}, \bigwedge, \bigvee)$ and $(\mathcal{F}, \bigwedge', \bigvee')$ a \textit{boolean algebra homomorphism} if $\phi(e \bigwedge f) = \phi(e) \bigwedge' \phi(f)$ and $\phi(e \bigvee f) = \phi(e) \bigvee' \phi(f)$ for any two elements $e,f \in \mathcal{E}$. If $\phi$ is a bijection we call it a \textit{boolean algebra isomorphism}. 
\medskip 
\begin{definition}
    \label{Def: complete semigroups}
    An inverse semigroup $\mathcal{I}$
    will be called \textit{complete} if any sequence $x_{n} \in \mathcal{I}$ of pairwise orthogonal elements has a supremum $u \in \mathcal{I}$ satisfying: 
    \begin{align*}
        ux_{n}^{-1}x_{n} = x_{n}, \text{ and } x_{n}x_{n}^{-1}u = x_{n} \nonumber
    \end{align*}
    for all $n \in \mathbb{N}$. As earlier, we denote the element $u$ by $\bigvee_{n}x_{n}$.
\end{definition}

In the example of the projection lattice $\cP(A)$ of an abelian von Neumann algebra $A = L^{\infty}(X,\mu)$, any sequence of pairwise orthogonal projections $p_{n}$ corresponds to Borel subsets $E_{n}$ that are pairwise disjoint up to measure zero. Taking the Borel subset $E = \bigcup_{n} E_{n}$, the corresponding projection $p$ is the supremum as in Definition \ref{Def: complete semigroups}. Hence such inverse semigroups are always complete, which motivates the following definition. 
\medskip 
\begin{definition}
    \label{Def: measured semigroups}
    A complete inverse semigroup $\mathcal{I}$ is called \textit{measured} if $(\Idem(\cI), \bigvee, \bigwedge)$ is a lattice of idempotents as above and the following are satisfied: 
    \begin{enumerate}
        \item There is a boolean algebra isomorphism $\phi: \Idem(\cI) \rightarrow \mathcal{P}(A)$ for an abelian von Neumann algebra $A$.
        \item For a pairwise orthogonal sequence of idempotents $e_{n} \in \Idem(\cI)$, we have $\phi(\bigvee_{n} e_{n}) = \bigvee_{n} \phi(e_{n})$. 
    \end{enumerate}
\end{definition}

One can check that the von Neumann algebra in Definition \ref{Def: measured semigroups} is indeed well defined. Suppose that $\cP(A_{1})$ is isomorphic to $\cP(A_{2})$ for $A_{i} = L^{\infty}(X_{i},\mu_{i})$ for $i \in \{1,2\}$ and the isomorphism preserves the suprema of orthogonal projections. Then this isomorphism is induced from a non-singular isomorphism of the underlying measure spaces $X_{1}$ and $X_{2}$ (see \cite[Theorem 15.9]{Kec95}). If $\cI$ is measured then there is an idempotent corresponding to the projection $1 \in L^{\infty}(X,\mu)$, and we shall call the idempotent 1. Clearly $1 \cdot f = f \cdot 1 = f$ for any $f \in \Idem(\cI)$. Similarly for an idempotent $e$ we have a unique idempotent $e^{c}$ satisfying $e \bigwedge e^c = 0$ and $e \bigvee e^{c} = 1$.  
\medskip
\begin{definition}
    \label{Def: separable inverse semigroups}
    We say that a subset of a complete measured inverse semigroup $\mathcal{F} \subseteq \mathcal{I}$ a \textit{generating set} if for every $x \in \mathcal{I}$, there exists a countable family of pairwise orthogonal idempotents $e_{n}$ such that $\bigvee_{n}e_{n} = x^{-1}x$, and elements $v_{n} \in \mathcal{F}$ such that $xe_{n} = v_{n}e_{n}$. This in turn implies that $x = \bigvee_{n}xe_{n} = \bigvee_{n}v_{n}e_{n}$. We call a complete measured inverse semigroup \textit{separable} if there is a countable generating set. Without loss of generality we shall always assume that $v_{0} = 1$.  
\end{definition}

In this article we shall only work with complete separable measured (csm) inverse semigroups. We now define the abstract notion of isomorphisms between such inverse semigroups. 
\medskip 
\begin{definition}
    \label{Def: Isomorphism of inverse semigroups}
    An algebraic inverse semigroup homomorphism $\psi: \mathcal{I} \rightarrow \mathcal{J}$ between csm inverse semigroups is called a lattice homomorphism if the restriction $\psi: \Idem(\cI) \rightarrow \Idem(\cJ)$ is a Boolean algebra homomorphism preserving the countable join. If such a homomorphism is bijective, it is called a \textit{lattice isomorphism}. 
\end{definition}

Notice that such an isomorphism immediately has the property that $\psi(1) = 1$ and $\psi(e^{c}) = \psi(e)^{c}$. Moreover for a countable generating set $\{v_{n}\}$ of $\cI$, it can be checked that the set $\{\psi(v_{n})\}$ is a countable generating set for $\cJ$.   
\medskip 
\begin{lemma}
    \label{Lemma: properties of isomorphism of inverse semigroups}
    Let $\phi: \mathcal{I} \rightarrow \mathcal{J}$ be an isomorphism of csm inverse semigroups. Then we have the following: 
    \begin{enumerate}
        \item For elements $x,y \in \mathcal{I}$, if $x \leq y$ then $\phi(x) \leq \phi(y)$.
        \item For elements $x,y \in \mathcal{I}$, if $x \perp y$ then $\phi(x) \perp \phi(y)$. 
        \item If $x_{n}$ is a sequence of pairwise orthogonal elements in $\mathcal{I}$, then $\phi(\bigvee_{n}x_{n}) = \bigvee_{n}\phi(x_{n})$.
    \end{enumerate}
\end{lemma}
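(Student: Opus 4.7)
The plan is to deduce all three items from two preliminary facts about any lattice isomorphism $\phi$ of csm inverse semigroups, together with the uniqueness of joins built into the csm axioms. First, $\phi$ automatically preserves inverses: applying the algebraic homomorphism $\phi$ to the defining identities $xx^{-1}x = x$ and $x^{-1}xx^{-1} = x^{-1}$ shows that $\phi(x^{-1})$ satisfies the inverse axioms for $\phi(x)$ in $\cJ$, so uniqueness of inverses forces $\phi(x^{-1}) = \phi(x)^{-1}$. Second, $\phi(0) = 0$: since $0$ is the unique absorbing element in any inverse semigroup with zero, bijectivity of $\phi$ makes $\phi(0)$ an absorbing element of $\cJ$, which must equal $0$.

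Item (1) is then immediate: unfolding $x \leq y$ as $x = ey$ for some $e \in \Idem(\cI)$ and applying $\phi$, one gets $\phi(x) = \phi(e)\phi(y)$, where $\phi(e)$ is idempotent by multiplicativity, so $\phi(x) \leq \phi(y)$. For item (2), I would rewrite orthogonality as the pair of idempotent identities $xx^{-1} \cdot yy^{-1} = 0$ and $x^{-1}x \cdot y^{-1}y = 0$ (using that meet equals product on idempotents), apply $\phi$, and invoke the two preliminary facts to obtain $\phi(x)\phi(x)^{-1} \cdot \phi(y)\phi(y)^{-1} = 0$ and $\phi(x)^{-1}\phi(x) \cdot \phi(y)^{-1}\phi(y) = 0$, which is exactly $\phi(x) \perp \phi(y)$.

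For item (3), let $u = \bigvee_n x_n$. By Definition \ref{Def: complete semigroups}, $u$ is characterized by the identities $u x_n^{-1} x_n = x_n$ and $x_n x_n^{-1} u = x_n$ for every $n$. Applying $\phi$ and invoking preservation of inverses yields exactly the same two identities with $\phi(u)$ and $\phi(x_n)$ in place of $u$ and $x_n$. By item (2), the sequence $(\phi(x_n))$ is pairwise orthogonal, so by completeness of $\cJ$ the supremum $v = \bigvee_n \phi(x_n)$ exists and is characterized by those same identities. The only step that is not mechanical bookkeeping is to then identify $\phi(u) = v$; this is where one uses the uniqueness of the element satisfying the supremum conditions that is built into the notion of a complete inverse semigroup. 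If one prefers to avoid reading uniqueness out of Definition \ref{Def: complete semigroups} directly, one can verify it by hand from the csm axioms: two elements satisfying those identities have the same source idempotent $\bigvee_n x_n^{-1}x_n$ and range idempotent $\bigvee_n x_n x_n^{-1}$ and agree on them, and such an element in a measured inverse semigroup is unique. This is the only place where genuine structural input is needed; everything else is formal manipulation using that $\phi$ respects the semigroup operation and the Boolean-algebra structure on idempotents.
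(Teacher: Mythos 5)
Your proposal is correct, and for items (1) and (3) it follows essentially the same route as the paper: unfold $x \leq y$ as multiplication by an idempotent for (1), and for (3) apply $\phi$ to the defining identities of the join and invoke uniqueness of the supremum. Where you genuinely diverge is item (2). The paper argues by contradiction, supposing there is an idempotent $f$ with $\phi(x)f = \phi(y)f$ and pulling it back through $\phi^{-1}$; this tests whether $\phi(x)$ and $\phi(y)$ \emph{agree} on an idempotent, which is not the negation of orthogonality as defined in Definition \ref{Def: inverse semigroup} and the surrounding text ($x \perp y$ means $xx^{-1}yy^{-1} = 0$ and $x^{-1}xy^{-1}y = 0$). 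Your argument --- first establishing $\phi(x^{-1}) = \phi(x)^{-1}$ from uniqueness of inverses and $\phi(0)=0$ from absorption, then applying $\phi$ directly to the two vanishing products --- is the cleaner and more literal proof of the stated claim, and I would prefer it. One caution on item (3): your ``by hand'' uniqueness check is not quite right as stated, because the identities $ux_n^{-1}x_n = x_n$ and $x_nx_n^{-1}u = x_n$ alone do not force $u^{-1}u = \bigvee_n x_n^{-1}x_n$ (in $[[\cG]]$, any bisection containing all the $U_n$ satisfies them). The uniqueness you need is that of the \emph{least} such element, i.e.\ the supremum in the partial order; since item (1) applies to both $\phi$ and $\phi^{-1}$, order is preserved in both directions and $\phi$ carries least upper bounds to least upper bounds, which closes the argument. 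This is the same implicit reading the paper relies on when it calls item (3) ``clear from Definition \ref{Def: complete semigroups}.''
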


\begin{proof}
    For idempotents $e,f \in \mathcal{I}$, this is clear since if $e \leq f$ then $f = e \bigvee (e^{c} \bigwedge f)$ and since $\phi$ is a lattice isomorphism, we have that $\phi(e) \leq \phi(f)$. Now for two general elements $x,y$, we have that $x \leq y$ if there exists an idempotent $e$ such that $x = ye$. Then $\phi(x) = \phi(y)\phi(e)$, thus proving 1. For 2, suppose there exists an idempotent $f$ in $\mathcal{J}$ such that $\phi(x)f = \phi(y)f$, then applying $\phi^{-1}$ on both sides, we have that $x\phi^{-1}(f) = y\phi^{-1}(f)$, contradicting $x \perp y$. Finally, 3 is clear from Definition \ref{Def: complete semigroups} and the fact that $\phi$ is an algebraic isomorphism.    
\end{proof}

\begin{lemma}
\label{Lemma: existence of expectation}
    Let $\mathcal{I}$ be a complete measured inverse semigroup with idempotents $\Idem(\cI)$. For any $x \in \mathcal{I}$, there exists a unique maximal element $E(x) \in \Idem(\cI)$ such that $xE(x) = E(x)$ and $E(x)x = E(x)$. 
\end{lemma}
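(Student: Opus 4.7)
The plan is to realise $E(x)$ as the join of a maximal pairwise-orthogonal family inside
\[
P_x \;\coloneqq\; \{\, e \in \Idem(\cI) \;:\; xe = e = ex \,\}.
\]
Two elementary closure properties are immediate: $0 \in P_x$, and if $e, f \in P_x$ then $x(ef) = (xe)f = ef$ and $(ef)x = e(fx) = ef$, so $P_x$ is closed under finite meets.

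The main structural step is closure of $P_x$ under countable orthogonal joins. Given pairwise-orthogonal $(e_n) \subset P_x$, completeness yields $u = \bigvee_n e_n$, characterised as the unique element of $\cI$ satisfying $u e_n = e_n u = e_n$ for every $n$. The element $xu$ satisfies the same relations, since $(xu)e_n = x(u e_n) = x e_n = e_n$ and $e_n(xu) = (e_n x)u = e_n u = e_n$; by uniqueness $xu = u$, and the symmetric computation gives $ux = u$, so $u \in P_x$. Because $\Idem(\cI) \cong \cP(A)$ for an abelian von~Neumann algebra with separable predual, the idempotent lattice satisfies the countable chain condition, so any maximal pairwise-orthogonal family of non-zero elements of $P_x$, chosen by Zorn's lemma, is at most countable; call it $(e_n)$ and set $u^\ast \coloneqq \bigvee_n e_n \in P_x$.

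It remains to show that $u^\ast$ is the maximum of $P_x$; uniqueness of $E(x)$ then follows automatically. Given $e \in P_x$, let $e' \coloneqq e(u^\ast)^c$, an idempotent orthogonal to every $e_n$; maximality of the family will force $e' = 0$, i.e.\ $e \leq u^\ast$, once I verify $e' \in P_x$. The identity $xe' = (xe)(u^\ast)^c = e(u^\ast)^c = e'$ is immediate, and the companion identity $e'x = e'$ reduces to establishing the commutation $(u^\ast)^c x = x(u^\ast)^c$. I would derive this from the two orthogonal decompositions $x = u^\ast \vee (u^\ast)^c x$ and $x = u^\ast \vee x(u^\ast)^c$, obtained by splitting the idempotents $xx^{-1}$ and $x^{-1}x$ via $u^\ast$ (using $xu^\ast = u^\ast x = u^\ast$, which implies $u^\ast \leq xx^{-1}$ and $u^\ast \leq x^{-1}x$) and the fact that left/right multiplication by $x$ distributes over orthogonal joins of sub-idempotents of $x^{-1}x$ and $xx^{-1}$; an elementary cancellation lemma then finishes the job, namely that in any csm inverse semigroup $a \vee b = a \vee c$ together with $a \perp b$ and $a \perp c$ forces $b = c$ (proved by right-multiplying by $b^{-1}b$ and $c^{-1}c$). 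I expect the main obstacle to lie exactly in this commutation: it cannot be asserted for general idempotents, since left and right multiplication by an idempotent behave rather differently in an inverse semigroup, and the argument pulls in completeness, the measured structure of $\cI$, and the hypothesis $xu^\ast = u^\ast x = u^\ast$ at the same time.
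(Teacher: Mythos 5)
Your overall skeleton is the same as the paper's: run Zorn's lemma over the set $P_x=\{e\in\Idem(\cI)\;:\;xe=e=ex\}$ and use completeness to close $P_x$ under countable orthogonal joins (the paper runs Zorn over chains, using the trace $\mu$ to extract a countable increasing cofinal sequence, rather than over maximal orthogonal families; this difference is cosmetic, and your final maximality argument actually makes the uniqueness of $E(x)$ more explicit than the paper does). The genuine gap is in the step you call immediate. The relations $v e_n = e_n v = e_n$ do \emph{not} characterise $u=\bigvee_n e_n$ uniquely: they only assert $e_n\le v$ for all $n$, i.e.\ that $v$ is an upper bound --- for instance $v=1$, and $x$ itself, satisfy them. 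So ``$xu$ satisfies the same relations, hence by uniqueness $xu=u$'' is not a valid deduction. The conclusion can be repaired: since $u$ is the \emph{least} upper bound one gets $u\le xu$, while $s(xu)=(xu)^{-1}(xu)=u\,x^{-1}x\,u=(x^{-1}x)\,u\le u=s(u)$, and in an inverse semigroup $a\le b$ together with $s(b)\le s(a)$ forces $a=b$. Some argument of this kind (or an explicit distributivity of multiplication over orthogonal joins) is needed; the paper itself merely asserts the corresponding fact, but it does not offer a false justification for it.

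Conversely, the ``main obstacle'' you flag at the end is illusory. Since idempotents commute (Lemma \ref{Lemma: facts about inverse semigroups}), for $e\in P_x$ and $e'=e(u^\ast)^c=(u^\ast)^c e$ one has $e'x=(u^\ast)^c(ex)=(u^\ast)^c e=e'$ directly from $ex=e$, symmetrically to the computation of $xe'$; no commutation of $(u^\ast)^c$ with $x$ is required, and the orthogonal-decomposition-plus-cancellation machinery you sketch there is both unnecessary and left unexecuted. In short: correct strategy, but the one delicate point ($x\bigvee_n e_n=\bigvee_n e_n$) is dispatched with an incorrect uniqueness appeal, while the point you identify as the crux is in fact trivial.
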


\begin{proof}
     Firstly notice that if there exists an element $e$ such that $e = xe = ex^{-1}$, then $xex^{-1} = xe = e$ and $ex = xex^{-1}x = xe = e$. Hence $e$ is the maximal element such that $ex =e$ iff $e$ is the maximal element such that $xe =e$. Now suppose that there exists a non-zero $e_{0} \in \Idem(\cI)$ such that $e_{0}x = xe_{0} = e_{0}$, otherwise the unique element is $0$. Let $A = \{e \in \Idem(\cI) \; | \; ex = xe = e \}$, so $A$ is non-empty. Let $A'$ be a totally ordered subset of $A$. Since $\mathcal{I}$ is measured, for any element $e \in \Idem(\cI)$ let $\mu(e)$ denote the trace of the projection corresponding to $e$. Now we consider the set $\mu(A') = \{ \mu(e) \; | \; e \in A' \} \subseteq [0,1]$. Let $c = \sup \{\mu(e) \; | \; e \in A'\}$, and let $e_{n}$ be an increasing sequence in $A'$ such that $\mu(e_{n}) \rightarrow c$. We now let $f = \bigvee_{n}e_{n}$. Clearly for any $e \in A'$, $e \leq f$. Moreover, $e_{n}x = xe_{n} = e$ for all $n$ implies that $fx = xf = f$, so $f \in A$. Now we can apply Zorn's Lemma to the set $A$ to find a unique maximal $E(x)$ in $\Idem(\cI)$ satisfying the conditions of the lemma.  
\end{proof}

Now we define a metric space structure on inverse semigroups. To begin with, we state the following easy facts. \textit{Fact 1}: $E(x^{-1}) = E(x)$. This is immediate by taking inverses of $E(x)x=E(x)$ and $xE(x) = E(x)$. \textit{Fact 2}: For two elements $x,y \in \mathcal{I}$, the element $e = E(x^{-1}y) = E(y^{-1}x)$ is the unique maximal idempotent such that $e \leq (x^{-1}x) \bigvee (y^{-1}y)$ and $xe = ye$. Similarly $f = E(xy^{-1}) = E(yx^{-1})$ is the unique maximal idempotent such that $f \leq (xx^{-1}) \bigvee (yy^{-1})$ and $fx = fy$. This is also an application of Zorn's Lemma similar to the proof of the previous lemma. \textit{Fact 3}: $E(x^{-1}y) \leq x^{-1}x \bigvee y^{-1}y$ and $E(xy^{-1}) \leq xx^{-1} \bigvee yy^{-1}$. This is also easy to check, for example notice that for $f = E(xy^{-1}) = E(yx^{-1})$, we have $fxx^{-1} = fyx^{-1} = f$ and hence $f \leq xx^{-1}$ and similarly $f \leq yy^{-1}$. \textit{Fact 4}: $E(e) = e$ for all $e \in \Idem(\cI)$. This is true as for any $f > e$, we have that $ef = e$ and hence $e$ is the unique maximal element such that $e^{2} = e$.   
\medskip
\begin{definition}
    \label{Def: metric on inverse semigroup}
    The metric $\dI$ on a csm inverse semigroup $\mathcal{I}$ is defined by $\dI(x,y) = \{\mu(x^{-1}x \bigvee y^{-1}y) - \mu(E(x^{-1}y))\} + \{\mu(xx^{-1} \bigvee yy^{-1}) - \mu(E(xy^{-1}))\}$ (where $\mu$ is as in the proof of Lemma \ref{Lemma: existence of expectation}). By Lemma \ref{Lemma: d_I is a metric} below, we know that this is well defined.
\end{definition}
\medskip
\begin{lemma}
    \label{Lemma: d_I is a metric}
    The function $d_{I}$ defined in Definition \ref{Def: metric on inverse semigroup} is a complete separable metric on $\mathcal{I}$. 
\end{lemma}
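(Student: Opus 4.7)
My plan is to verify the four metric axioms---with the triangle inequality as the main technical step---and then handle separability and completeness separately. A preliminary sharpening of Fact 3 will be useful: in fact $E(x^{-1}y) \leq x^{-1}x \bigwedge y^{-1}y$. Indeed, since $x = x \cdot x^{-1}x$ implies $xp = 0$ whenever $p \leq (x^{-1}x)^c$, the equation $xp = yp$ with $p \leq x^{-1}x \bigvee y^{-1}y$ forces (by distributivity in $\Idem(\cI)$) any piece of $p$ lying in $(x^{-1}x)^c \bigwedge y^{-1}y$ to vanish, so $p \leq x^{-1}x$, and symmetrically $p \leq y^{-1}y$. With this in hand, non-negativity, symmetry (Fact 1), and $d_\cI(x,x) = 0$ (Fact 4 applied to $x^{-1}x$ and $xx^{-1}$) are all immediate. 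For the identity of indiscernibles, if $d_\cI(x,y) = 0$ then faithfulness of $\mu$ on the idempotent lattice forces $E(x^{-1}y) = x^{-1}x \bigvee y^{-1}y$; since this idempotent dominates both $x^{-1}x$ and $y^{-1}y$, the relation $x = x \cdot x^{-1}x$ gives $xE(x^{-1}y) = x$ and similarly $yE(x^{-1}y) = y$, so the defining equation $xE = yE$ yields $x = y$.

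The main obstacle is the triangle inequality. Set $X = x^{-1}x$, $Y = y^{-1}y$, $Z = z^{-1}z$, $p = E(x^{-1}y)$, $q = E(y^{-1}z)$, $r = E(x^{-1}z)$. The key observation is $p \bigwedge q \leq r$: on $p \bigwedge q$ we have $x(p \bigwedge q) = y(p \bigwedge q) = z(p \bigwedge q)$, and maximality of $r$ gives the bound. Combining this with the sharpened bounds $p \leq X \bigwedge Y$ and $q \leq Y \bigwedge Z$, and using Boolean distributivity in $\Idem(\cI)$, I would establish
$$(X \bigvee Z) \bigwedge r^c \;\leq\; \bigl((X \bigvee Y) \bigwedge p^c\bigr) \bigvee \bigl((Y \bigvee Z) \bigwedge q^c\bigr).$$
The argument splits $(X \bigvee Z) \bigwedge r^c$ first as $(X \bigwedge r^c) \bigvee (Z \bigwedge X^c \bigwedge r^c)$ and then each half via $p \bigvee p^c$ (respectively $q \bigvee q^c$); the four resulting pieces all land on the right, using $r^c \bigwedge p \leq q^c$ (immediate from $p \bigwedge q \leq r$) together with $p \leq Y$ to route the piece $X \bigwedge r^c \bigwedge p$ into $(Y \bigvee Z) \bigwedge q^c$, and symmetrically for the other pieces. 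Applying $\mu$, monotonicity, and subadditivity then gives the source-side triangle inequality $\mu(X \bigvee Z) - \mu(r) \leq (\mu(X \bigvee Y) - \mu(p)) + (\mu(Y \bigvee Z) - \mu(q))$; the range-side analogue (using $E(xy^{-1})$ etc.) is identical, and summing yields $d_\cI(x,z) \leq d_\cI(x,y) + d_\cI(y,z)$.

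For separability, fix a countable generating set $\{v_n\}$ as in Definition \ref{Def: separable inverse semigroups} together with a countable $\mu$-dense subset $\{e^{(k)}\} \subset \Idem(\cI)$, which exists because $\Idem(\cI)$ is isomorphic to the projection lattice of the separable measure algebra $L^\infty(X,\mu)$. The countable family of finite orthogonal joins $\bigvee_{n \leq N} v_n f_n$, with the $f_n$ taken pairwise orthogonal from the countable Boolean subalgebra generated by $\{e^{(k)}\}$, is then $d_\cI$-dense: given $x = \bigvee_n v_n e_n$ and $\eps > 0$, truncate at $N$ large enough that $\mu(\bigvee_{n > N} e_n) < \eps/4$, approximate each remaining $e_n$ by a nearby element of the countable subalgebra, and orthogonalize. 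For completeness, given a $d_\cI$-Cauchy sequence $(x_n)$, pass to a fast subsequence $(x_{n_k})$ with $\sum_k d_\cI(x_{n_k}, x_{n_{k+1}}) < \infty$; a Borel--Cantelli argument on the complements of the agreement idempotents $E(x_{n_k}^{-1} x_{n_{k+1}})$ produces an increasing family of idempotents $h_k$ with $\mu(h_k^c) \to 0$ on which the subsequence is eventually constant. The completeness axiom of Definition \ref{Def: complete semigroups} then assembles these stabilized pieces into the limit $x \in \cI$, and a direct estimate gives $d_\cI(x_n, x) \to 0$.
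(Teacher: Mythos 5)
Your proof is correct, and on the metric axioms it agrees with the paper everywhere except the triangle inequality, where you take a genuinely different route. The paper rewrites $d_{\cI}(x,y) = d_0(x,y) + d_0(x^{-1},y^{-1})$ with $d_0(x,y) = \inf\{\mu(e) \; : \; xe^c = ye^c\}$, after which the triangle inequality is a one-line argument: witnesses $e$ for $(x,y)$ and $f$ for $(y,z)$ combine into the witness $e \bigvee f$ for $(x,z)$. You instead work directly with the maximal agreement idempotents $p = E(x^{-1}y)$, $q = E(y^{-1}z)$, $r = E(x^{-1}z)$, prove $p \bigwedge q \leq r$, and run a four-case Boolean computation; I checked the case analysis and it closes. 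Your sharpened form of Fact 3, namely $E(x^{-1}y) \leq x^{-1}x \bigwedge y^{-1}y$, which you need to route two of the four pieces, is legitimate --- it is exactly what the paper's own justification of Fact 3 establishes ($f \leq xx^{-1}$ and $f \leq yy^{-1}$), even though the displayed statement there uses a join. The trade-off: the paper's route is shorter but silently relies on the identity $d_{\cI} = d_0(x,y) + d_0(x^{-1},y^{-1})$, which amounts to observing that the maximal idempotent on which $x$ and $y$ agree is $E(x^{-1}y) \bigvee (x^{-1}x \bigvee y^{-1}y)^c$; your computation avoids that reformulation at the cost of more bookkeeping. For separability and completeness the paper declares the proofs routine and omits them, so your sketches are a net addition; the one point worth making explicit there is that when you assemble the limit of a Cauchy sequence from the stabilized pieces, Definition \ref{Def: complete semigroups} requires those pieces to be orthogonal in both source and range, and the range-orthogonality follows because conjugation by a fixed element is a Boolean isomorphism between the relevant idempotent sublattices.
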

\begin{proof}
    By Fact 1, it is clear that $\dI(x,y) = \dI(y,x)$. If $x = y$, then by Fact 4, $\dI(x,y) = 0$. If $\dI(x,y) = 0$, then by Fact 3, $xx^{-1} \bigvee yy^{-1} = E(xy^{-1})$ and $x^{-1}x \bigvee y^{-1}y = E(x^{-1}y)$. By Fact 2, $x(x^{-1}x \bigvee y^{-1}y) = y(x^{-1}x \bigvee y^{-1}y)$. But $x^{-1}x \bigvee y^{-1}y \leq 1$ and so $x(x^{-1}x \bigvee y^{-1}y) \leq x$. Also $x(x^{-1}x) \leq x(x^{-1}x \bigvee y^{-1}y)$ which implies $x \leq x(x^{-1}x \bigvee y^{-1}y)$. Therefore we have that $x = x(x^{-1}x \bigvee y^{-1}y)$ and running the same argument for $y$ instead of $x$, we have that $x = y$. The only thing left to show is the triangle inequality. 
    
    For two elements $x,y \in \cI$, consider the function $d_{0}$ given by $d_{0}(x,y) = \inf \{\mu(e) \; | \; e \in \Idem(\cI) \text{ and } xe^{c} = ye^{c}\}$. Then one can check that $d_{\cI}(x,y) = d_{0}(x,y)+ d_{0}(x^{-1},y^{-1})$. Therefore it suffices to check that $d_{0}$ satisfies the triangle inequality. Now for arbitrary elements $x,y,z \in \cI$, suppose there are idempotents $e$ and $f$ such that $xe^{c}=ye^{c}$ and $yf^{c}=zf^{c}$. By taking $g = e \bigvee f$ we have that $xg^{c}=yg^{c}$ and $yg^{c}=zg^{c}$. Thus $xg^{c}=zg^{c}$ and we have that
    \begin{align*}
        d_{0}(x,z) \leq \mu(g) \leq \mu(e) + \mu(f)
    \end{align*}
    Now by taking the infimum over all such idempotents $e$ and $f$, we have $d_0(x,z) \leq d_0(x,y) + d_0(y,z)$, as required. The proofs of completeness and separability are routine and are left to the reader.  
\end{proof}

\section{Correspondence with measured groupoids}
\label{Sec: The inverse semigroup - groupoid correspondence}

In this section we prove Theorem \ref{Theorem A} and show that discrete measured groupoids and csm inverse semigroups are essentially the same category. The proof of Theorem \ref{Theorem A} will follow from Propositions \ref{Prop: full pseudogroup is an inverse semigroup}, \ref{Prop: from inverse semigroup to a discrete measured groupoid} and \ref{Prop: Groupoid to inverse semigroup is injective}. In this section, we shall often denote $\Idem(\cI)$ by the letter $\cE$ for convenience of notation.  

\medskip
\begin{proposition}
    \label{Prop: full pseudogroup is an inverse semigroup}
    The full pseudogroup $[[\mathcal{G}]]$ of a discrete measured groupoid $\mathcal{G}$ with unit space $(X,\mu)$ is a csm inverse semigroup with $\Idem([[\cG]])$ isomorphic to $\cP(L^{\infty}(X,\mu))$. 
\end{proposition}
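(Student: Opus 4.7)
The plan is to verify the axioms of a csm inverse semigroup directly by working with partial bisections explicitly, where all set-theoretic identities are interpreted modulo $\mu_s$- (equivalently $\mu_t$-) null sets. I would take the product of $B, C \in [[\cG]]$ to be $B \cdot C = \{gh : g \in B, h \in C, s(g) = t(h)\}$, the inverse to be $B^{-1} = \{g^{-1} : g \in B\}$, and the zero element to be the empty set. A routine check shows that the product and inverse remain bisections, that composition is associative, and that $B = B B^{-1} B$ and $B^{-1} = B^{-1} B B^{-1}$, so $[[\cG]]$ is an inverse semigroup in the algebraic sense of Definition~\ref{Def: inverse semigroup}.

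Next I would identify the idempotents. If $B$ is a bisection with $B^2 = B$, then for any $g \in B$ we can write $g = g_1 g_2$ with $g_1, g_2 \in B$ and $s(g_1) = t(g_2)$; injectivity of $t$ on $B$ forces $g_1 = g$, whence $g_2 = s(g)$, and injectivity of $s$ on $B$ then forces $g = s(g)$. Thus every idempotent is (up to null sets) a Borel subset of $\cG^{(0)} = X$, and conversely every such subset is plainly idempotent. This gives a canonical Boolean algebra isomorphism $\phi : \Idem([[\cG]]) \to \cP(L^{\infty}(X,\mu))$ under which $\bigwedge$ and $\bigvee$ become intersection and union of Borel subsets of $X$.

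For completeness and the measured condition, I would observe that $B \perp C$ in $[[\cG]]$ is equivalent to $s(B) \cap s(C) = \emptyset = t(B) \cap t(C)$ (modulo null sets), since $BB^{-1}$ and $B^{-1}B$ correspond under $\phi$ to the subsets $t(B)$ and $s(B)$ of $X$. For a pairwise orthogonal sequence $B_n \in [[\cG]]$, the disjoint union $\bigsqcup_n B_n$ is a Borel set on which both $s$ and $t$ remain injective, hence a bisection, and it is immediate from the definitions that it is the required supremum. Restricting to idempotents, this shows that $\phi$ preserves countable joins of pairwise orthogonal elements, giving Definition~\ref{Def: measured semigroups}. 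For separability I would invoke a basis $\{B_0, B_1, B_2, \ldots\}$ of $\cG$ (with $B_0 = \cG^{(0)}$) in the sense recalled at the end of the preliminaries. Given $B \in [[\cG]]$, set $e_n = s(B \cap B_n) \subset X$; these are pairwise orthogonal idempotents with $\bigvee_n e_n = s(B) = B^{-1}B$, and injectivity of $s$ on $B_n$ yields $B_n \cdot e_n = B \cap B_n = B \cdot e_n$, so $\{B_n\}$ is a countable generating set in the sense of Definition~\ref{Def: separable inverse semigroups}.

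The main technical care needed is measure-theoretic bookkeeping: at every step one must verify that the relevant set-theoretic operations stay within the class of bisections modulo null sets, which is the only place where the discrete measured structure on $\cG$ (and not merely its Borel groupoid structure) is genuinely used. Beyond this, the argument is essentially a translation of the Borel bisection calculus into the abstract inverse-semigroup language, and no deep input is required once the existence of a countable basis is invoked for separability.
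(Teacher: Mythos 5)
Your proposal is correct and follows essentially the same route as the paper: verify the inverse semigroup axioms directly on bisections modulo null sets, identify the idempotents with Borel subsets of the unit space to get the measured lattice structure, take disjoint unions of pairwise orthogonal bisections for completeness, and use a countable basis of $\cG$ with $e_n = s(B \cap B_n)$ to produce a countable generating set. The only difference is that you spell out a few steps the paper leaves implicit (the characterization of idempotents and of orthogonality), which is harmless.
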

\begin{proof}
    Any bisection $U \in [[\cG]]$ has a unique inverse $U^{-1}$ and it is a straightforward calculation to check that $UU^{-1}U = U$ and $U^{-1}UU^{-1} = U^{-1}$, hence $[[\mathcal{G}]]$ has an inverse semigroup structure. The set of idempotents $\mathcal{E}$ is indeed given by $\{U \in [[\mathcal{G}]] \; | \; U \subseteq \mathcal{G}^{(0)}\}$, hence $\mathcal{E}$ is isomorphic to the lattice of projections in $L^{\infty}(X,\mu)$ and $[[\mathcal{G}]]$ is measured. Now if a sequence of bisections $U_{n}$ is pairwise orthogonal, then $\mu(s(U_{i}) \cap s(U_{j})) = 0$ and $\mu(t(U_{i}) \cap t(U_{j})) = 0$ for all $i \neq j$. Then letting $U = \bigcup_{n} U_{n}$, we have that up to a null set, the source and target maps restricted to $U$ are still injective. For such a sequence, $U$ is the unique supremum satisfying $UU_{n}^{-1}U_{n} = U_{n}$ and $U_{n}U_{n}^{-1}U = U_{n}$ for all $n$, hence showing that $[[\mathcal{G}]]$ is complete. 
    
    By \cite[Proposition 3.1]{ber-chak-don-kim} the groupoid $\mathcal{G}$ has a basis $\mathcal{U} = \{U_{n}\}$. Now given any bisection $U \in [[\mathcal{G}]]$, consider the bisections $U_{n} \cap U$. Letting $E_{n} = s(U_{b} \cap U)$, we have that the elements $\{E_{n}\}$ are pairwise orthogonal because if $x \in E_{n} \cap E_{m}$, then there exists $g,h \in U$ with $s(g) = s(h) = x$, which forces $n = m$. We also have that $\bigcup_{n}E_{n} = U^{-1}U = s(U)$. Now we check that $UE_{n} = U_{n}E_{n}$. Indeed if $g \in U_{n}E_{n}$ with $s(g) = x$, then $x \in E_{n}$. This means there exists $h \in U_{n} \cap U$ with $s(h) = x$, which in turn implies that $h = g$ and $g \in UE_{n}$. Similarly if $g \in UE_{n}$, it also has to be in $U_{n}E_{n}$, thus proving that $[[\mathcal{G}]]$ is separable.      
\end{proof}

Before proving the converse to Proposition \ref{Prop: full pseudogroup is an inverse semigroup}, we develop some notation.  For an idempotent $e$, we denote by $\mathcal{E}_{e}$ the set $\{f \in \mathcal{E}  \; | \; f \leq e \}$. For measured inverse semigroups, we know that $\mathcal{E}$ is isomorphic as a lattice to the projection lattice of $L^{\infty}(X,\mu)$. If we denote the Borel subset corresponding to $e$ by $E$, then $\mathcal{E}_{e}$ is isomorphic as a lattice to the projection lattice in $L^{\infty}(E)$. The set $\mathcal{E}_{e}$ forms a Boolean algebra with respect to the operations $\bigvee$, $\bigwedge$ and inverse and it is isomorphic to the $\sigma$-algebra of Borel sets $B(E)$. 

\medskip 
\begin{proposition}
    \label{Prop: from inverse semigroup to a discrete measured groupoid}
    Let $\mathcal{I}$ be a csm inverse semigroup with $\Idem(\cI)$ isomorphic to $\cP(L^{\infty}(X,\mu))$. Then there is a discrete measured groupoid $\GI$ with unit space $(X,\mu)$ such that $[[\GI]]$ is isomorphic to $\mathcal{I}$. 
\end{proposition}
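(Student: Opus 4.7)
The plan is to construct $\GI$ as a groupoid of germs on $X$, using a countable generating family for $\cI$ to endow it with a standard Borel structure. The guiding principle is that each generator should correspond to a bisection of $\GI$, and two generators agreeing on a neighbourhood (in the idempotent sense) of a point should produce the same arrow over that point. Recovering $[[\GI]] \cong \cI$ is then essentially a matter of unwinding the definitions.

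First, by Definition \ref{Def: separable inverse semigroups} I fix a countable generating set $\{v_{n}\}_{n \geq 0}$ with $v_{0} = 1$, enlarged so that it is closed under inverses. Write $s_{n} = v_{n}^{-1} v_{n}$, $t_{n} = v_{n} v_{n}^{-1}$ and let $S_{n}, T_{n} \subset X$ be the corresponding Borel sets under $\Idem(\cI) \cong \cP(L^{\infty}(X,\mu))$. The conjugation map $e \mapsto v_{n} e v_{n}^{-1}$ from $\mathcal{E}_{s_{n}}$ to $\mathcal{E}_{t_{n}}$ is a bijection intertwining meets and joins and preserving countable suprema of pairwise orthogonal idempotents; hence it descends to a boolean $\sigma$-algebra isomorphism between the measure algebras of $S_{n}$ and $T_{n}$. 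By \cite[Theorem 15.9]{Kec95} this is implemented, up to null sets, by a non-singular Borel isomorphism $\theta_{n}: S_{n} \to T_{n}$, and after passing to a suitable Borel domain I may assume $\theta_{n'} = \theta_{n}^{-1}$ whenever $v_{n'} = v_{n}^{-1}$.

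Next, I form the quotient $\GI$ of $\widetilde{\cG} := \bigsqcup_{n} \{n\} \times S_{n}$ by the equivalence $(n,x) \sim (m,x)$ iff there exists $e \in \Idem(\cI)$ with $x$ in the Borel set of $e$ and $v_{n} e = v_{m} e$. Each equivalence class is countable and, on a co-null Borel domain, the relation can be made Borel using countable exhaustions of $\Idem(\cI)$, so the quotient carries a standard Borel structure. I declare the unit space to be the image of $\{0\} \times X$, define source and target by $s[(n,x)] = x$ and $t[(n,x)] = \theta_{n}(x)$, and inversion by $[(n,x)]^{-1} = [(n', \theta_{n}(x))]$ with $v_{n'} = v_{n}^{-1}$. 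Composition on composable pairs is defined using separability: write $v_{m} v_{n} = \bigvee_{k} v_{\sigma(k)} e_{k}$ for pairwise orthogonal idempotents $e_{k}$ with join $(v_{m}v_{n})^{-1}(v_{m}v_{n})$, and set $[(m, \theta_{n}(x))] \cdot [(n,x)] = [(\sigma(k), x)]$ for the unique $k$ with $x$ in the support of $e_{k}$. Pushing $\mu$ forward through the $\theta_{n}$'s yields mutually absolutely continuous source and target measures, so $\GI$ is discrete measured.

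Finally I build the lattice isomorphism $\Phi : \cI \to [[\GI]]$. For $v \in \cI$, separability provides pairwise orthogonal idempotents $f_{k}$ with $\bigvee_{k} f_{k} = v^{-1}v$ and indices $\tau(k)$ with $v f_{k} = v_{\tau(k)} f_{k}$; I set $\Phi(v) = \bigsqcup_{k} \{[(\tau(k), x)] : x \in F_{k}\}$. The defining equivalence of $\GI$ makes $\Phi(v)$ independent of the decomposition; orthogonality of the $f_{k}$ makes it a bisection; it is plainly a semigroup homomorphism restricting on idempotents to the given identification $\Idem(\cI) \cong \cP(L^{\infty}(X,\mu))$, hence a lattice homomorphism. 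Injectivity is immediate from the equivalence relation defining $\GI$, and surjectivity is the basis argument from the proof of Proposition \ref{Prop: full pseudogroup is an inverse semigroup} applied to the countable family $\{\Phi(v_{n})\}$: any bisection is a countable disjoint union of sub-bisections of the $\Phi(v_{n})$.

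The main obstacle is measurability bookkeeping. The equivalence on $\widetilde{\cG}$, the Borel isomorphisms $\theta_{n}$, and the decompositions used for composition and for defining $\Phi$ are all specified only up to null sets; for the groupoid operations to be genuinely Borel one must make coherent Borel-measurable choices of the idempotents $e_{k}$ and indices $\sigma(k), \tau(k)$, and restrict throughout to a single co-null Borel domain on which all groupoid axioms hold pointwise. I would handle this with the Jankov-von Neumann measurable selection theorem together with the measured-field framework of \cite{WoutersVaes24}, as signalled in the introduction.
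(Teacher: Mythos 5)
Your proposal is correct and follows essentially the same route as the paper: build the groupoid of germs over $X$ from a countable generating set, realize each generator's conjugation action on idempotents as a non-singular point map via \cite[Theorem 15.9]{Kec95}, define composition through the decomposition of products $v_m v_n = \bigvee_k v_{\sigma(k)} e_k$, and read off $[[\GI]] \cong \cI$ from the (essentially unique) decompositions $v = \bigvee_k v_{\tau(k)} f_k$. The only difference is presentational: where you quotient $\bigsqcup_n \{n\}\times S_n$ by the germ equivalence (whose smoothness — via least-index representatives on the Borel sets $\{x : v_n e = v_m e,\ x \in \supp(e)\}$ — is what makes the quotient standard Borel), the paper instead disjointifies the generating set up front, replacing $v_n$ by $w_n = v_n e^c$ so that no two generators agree on a non-null idempotent and the disjoint union itself is already the transversal.
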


\begin{proof}
    We know that the set of idempotents $\mathcal{E}$ is isomorphic as a lattice to projections in $L^{\infty}(X,\mu)$. We shall construct a groupoid with unit space $(X,\mu)$. For $v \in \mathcal{I}$, let us denote the elements $v^{-1}v$ and $vv^{-1}$ by $e,f \in \mathcal{E}$ respectively. Let $\psi : \mathcal{E}_{f} \rightarrow \mathcal{E}_{e}$ be the lattice isomorphism defined by $\psi(e') = v^{-1}e'v$. Now suppose under the identification with projections in $L^{\infty}(X, \mu)$, we have that $e \leftrightarrow U$ and $f \leftrightarrow V$ for Borel sets $U$ and $V$. Then $\psi$ induces a Boolean algebra isomorphism $B(V) \rightarrow B(U)$ and by \cite[Theorem 15.9]{Kec95} we have a non-singular isomorphism $\phi_{v}: U \rightarrow V$ implementing it.   

    Now we start with a countable generating set $\mathcal{F}_{0} = \{v_{0}=1, v_{1},v_{2}... \} \subset \mathcal{I}$. We consider the countable set $\mathcal{F} = \{w_{0} =1,w_{1},w_{2},...\}$ where $w_{1} = v_{1}E(v_{1})^{c}$ (see Lemma \ref{Lemma: existence of expectation}) and $w_{n}$ is defined as follows. Let $e$ be the unique maximal element such that there exists pairwise orthogonal idempotents $e_{1},e_{2},...,e_{n-1}$ with $v_{n}e = \bigvee_{i=1}^{n-1}w_{i}e_{i}$. We then define $w_{n} = v_{n}e^{c}$. It can be checked that $\mathcal{F}$ is still a generating set, with the property that for any two elements $w_{m}$ and $w_{n}$, there exists no non-zero idempotent $e$ such that $w_{m}e = w_{n}e$. This in turn has the consequence that any element $x \in \mathcal{I}$ can be written uniquely as a join of elements $\bigvee w_{n}e_{n}$. Under the identification with $L^{\infty}(X, \mu)$, suppose we have Borel sets $w_{n}^{-1}w_{n} \leftrightarrow D_{n}$ and $w_{n}w_{n}^{-1} \leftrightarrow R_{n}$ (Note that we take $w_{0} = 1$ and hence $D_{0} = R_{0} = X$). By the previous paragraph, this induces non-singular isomorphisms $\phi_{n} : D_{n} \rightarrow R_{n}$ between Borel sets. 

    We now construct our groupoid $\mathcal{G}_\cI$ as follows: Let $D_{0}' = D_{0} = X$ and we inductively define $D_{n}'= \{x \in D_{n} \; | \; \phi_{i}(x) \neq \phi_{n}(x) \text{ for all } i < n\}$. By the properties of $\mathcal{F}$, the set $D_{n}'$ differs from $D_{n}$ only by a measure zero set. For each $n \geq 1$, let $\mathcal{G}_{n}$ consist of elements $g = (\phi_{n}(x),x)$ for all $x \in D_{n}'$ with $s(g) = x$ and $t(g) = \phi_{n}(x)$. Now we define $\GI = \bigsqcup_{n \geq 0} \mathcal{G}_{n}$. By abusing notation we will sometimes denote elements in $\mathcal{G}_{n}$ by $g_{n}$ (with a subscript $n$). We shall now define the composition and inverse operations as follows. Let $h = (\phi_{n}(x),x) \in \mathcal{G}_{n}$ and let $y = \phi_{n}(x)$. Now let $g = (\phi_{m}(y),y) \in \mathcal{G}_{m}$ and we need to define a product $g \cdot h$. We first write $w_{m}w_{n} = \bigvee_{k}w_{k}e_{k}$. This implies that $\phi_{m}\circ \phi_{n}|_{e_{k}} = \phi_{k}|_{e_{k}}$ for every $k$. Let $e_{k}$ correspond to the Borel sets $E_{k} \subset X$ and let $k_{0}$ be such that $x \in E_{k_{0}}$. Then $l = (\phi_{k_{0}}(x),x)$ is the unique element $g \cdot h$ with $s(l) = x$ and $t(l) = \phi_{k_{0}}(x) = \phi_{m}\circ \phi_{n}(x)$. Similarly for an element $g = (\phi_{m}(x),x) \in \mathcal{G}_{m}$ with $\phi_{m}(x) = y$, we first write $w_{m}^{-1} = \bigvee_{k}w_{k}e_{k}$. Hence $\phi_{m}^{-1}|_{e_{k}} = \phi_{k}|_{e_{k}}$ for every $k$. Let $k_{0}$ be such that $y \in E_{k_{0}}$, then $g^{-1} \coloneqq (\phi_{k_{0}}(y),y)$ is the unique inverse of $g$. Indeed we have that $s(g^{-1}) = y$, $t(g^{-1}) = \phi_{k_{0}}(y) = \phi_{m}^{-1}(y) = x$.
    
    Now notice that $\mathcal{G}^{(0)}$ is a standard probability space as the inverse semigroup is measured, and the Borel structure of each $\mathcal{G}_{n}$ is induced from the Borel structure of $\mathcal{G}^{(0)} \times \mathcal{G}^{(0)}$. Since $\mathcal{G}$ is a disjoint union of countably many Borel sets, we have that $\mathcal{G}$ is a standard Borel space as well. Since each $\phi_{n}$ is Borel, the source, target, inverse and composition maps are Borel. It is also clear from construction that the source and target maps are countable to one. Now let $U$ be a Borel subset of $\mathcal{G}$ such that $s(U)$ and $t(U)$ are injective. We write $U = \bigsqcup_{n} U_{n}$ where $U_{n} =  U \cap \mathcal{G}_{n}$. Hence $\mu(s(U)) = \sum_{n} \mu(s(U_{n}))$. Similarly $\mu(t(U)) = \sum_{n}\mu(t(U_{n}))$, but $t(U_{n}) = \phi_{n}(s(U_{n}))$ and hence $\mu(s(U_{n})) = 0$ if and only if $\mu(t(U_{n})) = 0$ for every $n$. Thus the measure is quasi-invariant and $\mathcal{G}$ is a discrete measured groupoid.

    We now show that the csm inverse semigroup $[[\GI]]$ is isomorphic to $\mathcal{I}$ by defining a map $\gamma : \mathcal{I} \rightarrow [[\GI]]$. Once again we use the isomorphism between $\mathcal{E}$ and $\cP(L^{\infty}(X,\mu))$. For an idempotent $e \in \mathcal{E}$ corresponding to the Borel set $e \leftrightarrow E$, we define $\gamma(e) = E$ where $E$ is the corresponding bisection in the unit space $\GI^{(0)}$. Next we define $\gamma (w_{n}) = \mathcal{G}_{n}$ for all $n$ and $\gamma (w_{n}e) = \gamma(w_{n})\gamma(e) = \mathcal{G}_{n} \cap E$. Since any element $u$ can be written uniquely as $\bigvee w_{n}e_{n}$, we can now define $\gamma(u) = \bigsqcup\gamma(w_{n}e_{n})$. By definition and the uniqueness of this expression of $u$, it is immediate that $\gamma$ is injective. Given any bisection $U \in [[\GI]]$, we write $U = \bigsqcup U_{n}$ where $U_{n} \subset \mathcal{G}_{n}$. Let $f_{n}$ be the idempotents corresponding to $s(U_{n})$ and notice that $\gamma(\bigvee w_{n}f_{n}) = U$ and hence $\gamma$ is surjective. Now for idempotents $e$ and $f$, consider the decomposition $w_{m}ew_{n}f = \bigvee w_{k}e_{k}$. Let us assume $e_{n}$ corresponds to the Borel subset $E_{n}$ and $e,f$ corresponds to the Borel subsets $E,F$ respectively. Then we have: 
    \begin{align*}
     \gamma(w_{m}e)\gamma(w_{n}f) &= (E \cap \cG_{m})\cdot (F \cap \mathcal{G}_{n}) = \bigsqcup_{k} (E_{k} \cap \mathcal{G}_{k}) \\ &= \bigsqcup_{k} \gamma (w_{k}e_{k}) = \gamma (\bigvee w_{k}e_{k}) = \gamma(w_{m}ew_{n}f)   
    \end{align*}
    
    This implies that for any two elements $u = \bigvee_{i} w_{i}e_{i}$ and $v = \bigvee_{j} w_{j}f_{j}$, we have:
\begin{align*}
    \gamma(uv) &= \gamma((\bigvee_{i} w_{i}e_{i})(\bigvee_{j}w_{j}f_{j})) = \gamma (\bigvee_{i,j}( w_{i}e_{i}w_{j}f_{j}))\\  &= \bigsqcup_{i,j}\gamma (w_{i}e_{i}w_{j}e_{j}) 
    = \bigsqcup_{i,j}\gamma(w_{i}e_{i})\gamma(w_{j}e_{j}) \\ &= (\bigsqcup_{i} \gamma(w_{i}e_{i}))(\bigsqcup_{j}\gamma(w_{j}e_{j})) = \gamma(u)\gamma(v) 
\end{align*}
Similarly for the inverse, note that: 
\begin{align*}
 \gamma(w_{m}e)^{-1} &= (\mathcal{G}_{m} \cap E)^{-1} = \bigsqcup_{k}(E_{k} \cap \mathcal{G}_{k}) \\ &= \bigsqcup_{k} \gamma (w_{k}e_{k}) = \gamma((w_{m}e)^{-1})   
\end{align*}
Therefore for $u = \bigvee_{k}w_{k}e_{k}$ we have:
\begin{align*}
    \gamma(u^{-1}) = \gamma(\bigvee_{k}(w_{k}e_{k})^{-1}) = \bigsqcup_{k}\gamma(w_{k}e_{k})^{-1} = (\bigsqcup_{k}\gamma(w_{k}e_{k}))^{-1} = \gamma(u)^{-1}. \nonumber
\end{align*}
Hence $\gamma$ is an inverse semigroup isomorphism between $\mathcal{I}$ and $[[\GI]]$. By Proposition \ref{Prop: Groupoid to inverse semigroup is injective} below, we have that the construction does not depend on the choice of the generating set, thus concluding the proof. 
\end{proof}

One checks that the map $\cG \mapsto [[\cG]]$ as in Theorem \ref{Theorem A} is indeed functorial. Notice that for an isomorphism $\phi: \cG_{1} \rightarrow \cG_{2}$ of discrete measured groupoids, we get an isomorphism of inverse semigroups $\phitilde: [[\cG_{1}]] \rightarrow [[\cG_{2}]]$ simply by mapping a bisection $B \subset \cG_{1}$ to $\phi(B) \subset \cG_{2}$. In the next lemma, we show that any such isomorphism of inverse semigroups must be implemented by an isomorphism of groupoids, thus concluding the proof of Theorem \ref{Theorem A}.  

\medskip
\begin{proposition}
\label{Prop: Groupoid to inverse semigroup is injective}
    Let $\cG_{1}$ and $\cG_{2}$ be two discrete measured groupoids such that $\Phi: [[\cG_{1}]] \rightarrow [[\cG_{2}]]$ is an isomorphism of csm inverse semigroups. Then there is an isomorphism of discrete measured groupoids $\phi: \cG_{1} \rightarrow \cG_{2}$ such that $\phitilde = \Phi$. 
\end{proposition}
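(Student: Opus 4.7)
The plan is to first extract a non-singular isomorphism of unit spaces from $\Phi|_{\Idem}$, then promote it to a groupoid isomorphism using the bisection structure, verifying the groupoid axioms and uniqueness along the way. Restricting $\Phi$ to the idempotents gives a countable-join-preserving Boolean algebra isomorphism between $\cP(L^{\infty}(\cG_{1}^{(0)},\mu_{1}))$ and $\cP(L^{\infty}(\cG_{2}^{(0)},\mu_{2}))$. By \cite[Theorem 15.9]{Kec95}, this is induced by a non-singular isomorphism $\phi_{0}: \cG_{1}^{(0)} \to \cG_{2}^{(0)}$, defined up to null sets, which must be the restriction of the hoped-for $\phi$ to the unit space.

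The key bridge is an intertwining identity for bisections: every $B \in [[\cG_{1}]]$ induces a non-singular partial bijection $\theta_{B}: s(B) \to t(B)$ sending $s(g) \mapsto t(g)$ for $g \in B$, and the corresponding $\theta_{\Phi(B)}$ on the $\cG_{2}$-side satisfies $\phi_{0} \circ \theta_{B} = \theta_{\Phi(B)} \circ \phi_{0}$ almost everywhere on $s(B)$. To see this, observe that for any idempotent $e \leq B^{-1}B$ corresponding to a Borel set $E \subseteq s(B)$, the conjugate $BeB^{-1}$ is the idempotent corresponding to $\theta_{B}(E)$; applying the algebraic homomorphism $\Phi$ together with its lattice-isomorphism restriction to idempotents yields the set-level identity $\theta_{\Phi(B)}(\phi_{0}(E)) = \phi_{0}(\theta_{B}(E))$ for every Borel $E \subseteq s(B)$, which upgrades to the pointwise equation almost everywhere.

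With this in hand, I would choose a symmetric countable basis $\{B_{n}\}$ of $\cG_{1}$ with $B_{0} = \cG_{1}^{(0)}$, and define $\phi: \cG_{1} \to \cG_{2}$ piecewise: for $g \in B_{n}$ with $s(g) = x$, let $\phi(g)$ be the unique element of $\Phi(B_{n})$ with source $\phi_{0}(x)$. To show this is independent of the basis, and hence that $\phi(B) = \Phi(B)$ for every bisection $B$, I would invoke Lemma \ref{Lemma: existence of expectation} to produce, for any other bisection $B$ with $g \in B$, an idempotent $e = E(B_{n}^{-1}B)$ satisfying $B_{n}\cdot e = B \cdot e$; applying $\Phi$ gives $\Phi(B_{n})\cdot \Phi(e) = \Phi(B)\cdot \Phi(e)$, so $\phi(g)$ also lies in $\Phi(B)$. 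This immediately delivers $\phitilde = \Phi$ and uniqueness of $\phi$.

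Finally, the groupoid axioms drop out routinely: for composable $g \in C$ and $h \in B$, the element $gh$ lies in $CB$, and both $\phi(gh)$ and $\phi(g)\phi(h)$ are the unique element of $\Phi(CB) = \Phi(C)\Phi(B)$ with source $\phi_{0}(s(h))$, with sources and targets lining up via the intertwining identity from paragraph two; inversion is analogous using $\Phi(B^{-1}) = \Phi(B)^{-1}$. Measurability of $\phi$ is inherited from the piecewise definition over the countable basis, and bijectivity with a measurable, non-singular inverse is obtained by running the same construction for $\Phi^{-1}$ to produce a two-sided inverse. The main obstacle I anticipate is the intertwining identity in paragraph two; once one can freely conjugate idempotents through bisections and reinterpret the result on Borel subsets of the unit space via Kechris's theorem, everything downstream is essentially bookkeeping.
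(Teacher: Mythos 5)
Your proposal is correct and takes essentially the same route as the paper: extract the unit-space isomorphism $\phi_{0}$ from the idempotent lattice via \cite[Theorem 15.9]{Kec95}, then build $\phi$ piecewise over a countable basis so that $\phi(B_{n}) = \Phi(B_{n})$. In fact you are more explicit than the paper on two points it leaves to the reader — the intertwining identity $\phi_{0}\circ\theta_{B} = \theta_{\Phi(B)}\circ\phi_{0}$ obtained by conjugating idempotents through bisections, and the verification (via $\Phi(CB)=\Phi(C)\Phi(B)$ and independence of the basis) that $\phi$ respects composition — both of which are correct and genuinely needed for a complete argument.
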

\begin{proof}
    Let $X$ and $Y$ be the unit spaces of $\cG_{1}$ and $\cG_{2}$ respectively and let $\{X,B_{1},B_{2},...\}$ be a basis for $\cG_{1}$. Since $\Phi$ is an isomorphism of measured inverse semigroups, we have that $\Phi(1) = 1$ and $\Phi$ preserves the lattice structure of the idempotents. Again \cite[Theorem 15.9]{Kec95} immediately gives an isomorphism $\phi_{0}: X \rightarrow Y$ that implements the isomorphism between the lattices $L^{\infty}(X)$ and $L^{\infty}(Y)$. We first claim that $\{Y,\Phi(B_{1}),\Phi(B_{2}),...\}$ forms a basis for $\cG_{2}$. We see again that for bisections $C,D \in [[\cG_{1}]]$, if $C \cap D$ is measure zero, then $\Phi(C) \cap \Phi(D)$ is measure zero. Now, given any bisection $D \in [[\cG_{2}]]$, we have that $\Phi^{-1}(D) = \bigsqcup_{n}B_{n}E_{n}$ for Borel subsets $E_{n} \subseteq X$. Since $\Phi(E_{n})$ are disjoint projections in $Y$, we have automatically that $D = \bigsqcup_{n} \Phi(B_{n})\Phi(E_{n})$. Now since $\cG_{2}$ has a basis, up to measure zero any point belongs to some bisection. Thus we have that $\bigsqcup_{n}\Phi(B_{n}) = \cG_{2}$ up to measure zero and this proves our claim.
    
    Now since each bisection $B$ has the property that the source and target maps are injective on $B$, there is a lattice structure on $B$ induced from $s(B)$. By Lemma \ref{Lemma: properties of isomorphism of inverse semigroups}, we have that $\Phi$ gives a Boolean algebra isomorphism between $B$ and $\Phi(B)$. Once again we can find a Borel isomorphism $\phi_{n}: B_{n} \rightarrow \Phi(B_{n})$. Now we can define $\phi$ as the disjoint union of $\phi_{n}$'s and since this is a countable union of Borel maps, $\phi$ is a Borel isomorphism and $\phitilde = \Phi$ by construction. 
\end{proof}

\section{Quotient groupoid of regular inclusions}
\label{Sec: Cocycle actions and regular subalgebras}

For the entirety of this section, we refer the reader to \cite{WoutersVaes24} for relevant definitions. In particular, we shall use the notions of measured fields of Polish spaces (\cite[Definition 2.4]{WoutersVaes24}), Polish groups (\cite[Definition 7.3]{WoutersVaes24}), Hilbert spaces (\cite[Definition 8.2]{WoutersVaes24}), von Neumann algebras (\cite[Definition 8.4]{WoutersVaes24}). The main goal of this section is to prove Theorem \ref{Theorem B}, i.e., there is a one-one correspondence between regular subalgebras of von Neumann algebras and cocycle actions of discrete measured groupoids on measured fields of factors. As observed in \cite[Remark A.2.28]{Lisethesis} there are multiple ways of defining actions of groupoids on measured fields of factors, and they are equivalent. So we give here the following definition:  inspired by \cite[Theorem A.2.25]{Lisethesis}. For a discrete measured groupoid $\mathcal{G}$ with unit space $(X,\mu)$, we shall denote its restriction to a Borel subset $E \subset X$ by $\mathcal{G}|_{E}$. Before defining the central objects that we deal with in this section, we make the following crucial remark: 
\medskip
\begin{remark}
    \label{Rmk: justifying borel domain for groupoids}
    From \cite[Proposition 7.8]{WoutersVaes24} and \cite[Theorem A.2.25]{Lisethesis} we know that for measured fields of factors $B = (B_{x})_{x \in X}$ and $D = (D_{x})_{x \in X}$, the unitary groups $\cU(B) = (\cU(B_{x}))_{x \in X}$ forms a measured field of Polish groups and the *-isomorphisms $\Iso(B,D) = (\Iso(B_{x},D_{x}))_{x \in X}$ forms a measured field of Polish spaces. Now for a discrete measured groupoid $\cG$ with unit space $X$, we know that $\cG$ and $\cG^{(2)}$ are standard Borel spaces. Since the source and target maps are countable-to-one, it is easy to check that all families of factors of the form $(B_{s(g)})_{g \in \cG}$, $(B_{t(g)})_{g \in \cG}$, $(B_{t(g)})_{(g,h) \in \cG^{(2)}}$, etc are measured fields of factors. Hence we get measured fields of Polish groups of the form $(\cU(B_{t(g)}))_{g \in \cG}$, $(\cU(B_{s(g)}))_{g \in \cG}$, $(\cU(B_{t(g)}))_{(g,h) \in \cG^{(2)}}$. In the same way, we also get measured fields of Polish spaces of the form $(\Iso(B_{s(g)},B_{t(g)}))_{g \in \cG}$. We shall often talk about common Borel domains of measured fields of separable structures where the base space varies over the set $\textit{dom} = \{ X, \cG , \cG^{(2)}\}$. Then we shall say that a finite set $\mathscr{S}$ of measured fields of separable structures over the standard Borel spaces in the set $\textit{dom}$ is said to have a common Borel domain $X_{0} \subset X$ if for all $S \in \mathscr{S}$, the set $X_{0}$, $\cG|_{X_{0}}$ or $\cG|_{X_{0}}^{(2)}$ forms a Borel domain for $S$ when $S$ is a measured field over the standard Borel space $X$, $\cG$ or $\cG^{(2)}$ respectively.   
\end{remark}
\medskip
\begin{definition}
    \label{2-cocycle on groupoids}
    Let $\mathcal{G}$ be a discrete measured groupoid with unit space $(X, \mu)$ and $B = (B_{x})_{x \in X}$ be a measured field of factors. Consider the measured field of Polish spaces $\Iso(B_{s(g)},B_{t(g)})_{g \in \mathcal{G}}$. Suppose that $\alpha$ is a section of $(\Iso(B_{s(g)},B_{t(g)}))_{g \in \cG}$.  Then $\alpha$ is called \textit{an action of $\mathcal{G}$ on $B$} if there is a Borel domain $X_{0} \subset X$ such that letting $\cG_{0} = \cG|_{X_{0}}$, we have that $\alpha|_{\cG_{0}}$ is a Borel section that satisfies $\alpha_{g} \circ \alpha_{h} = \alpha_{gh}$ for all $(g,h) \in \mathcal{G}_{0}^{(2)}$. 
    
    A measurable section $u$ of $(\cU(B_{t(g)}))_{(g,h) \in \cG^{(2)}}$ will be called a \textit{measured field of unitaries}. Suppose that $\alpha$ is a measurable section of the measured field of Polish spaces $(\Iso(B_{s(g)},B_{t(g)}))_{g \in \cG}$ and $u$ is a measured field of unitaries in $(\cU(B_{t(g)}))_{(g,h) \in \cG^{(2)}}$. Suppose that $X_{0}$ is a common Borel domain for $(\Iso(B_{s(g)},B_{t(g)}))_{g \in \cG}$ and $(\cU(B_{t(g)}))_{(g,h) \in \cG^{(2)}}$ such that $\alpha|_{\cG_{0}}$ and $u|_{\cG_{0}^{(2)}}$ are Borel sections and $\alpha$ and $u$ together satisfy the following: 
    \begin{enumerate}
        \item  $\alpha_{g} \circ \alpha_{h} = \Ad(u(g,h)) \circ \alpha_{gh}$ for all $(g,h) \in \mathcal{G}_{0}^{(2)}$. 
        \item $\alpha_{g}(u(h,k))u(g,hk) = u(g,h)u(gh,k)$ for all $(g,h,k) \in \mathcal{G}_{0}^{(3)}$.
        \item $\alpha_{g} = \id$ when $g \in X_{0}$.
        \item $u(g,h) = 1$ when $g \in X_{0}$ or $h \in X_{0}$. 
    \end{enumerate}
    Then $u$ is called a \textit{2-cocycle for $\alpha$} and the pair $(\alpha,u)$ is called a \textit{cocycle action} of $\mathcal{G}$ on the measured field of factors $B$. We call $X_{0}$ a \textit{Borel domain} for the cocycle action. 
\end{definition}

We now define the notion of cocycle conjugacy for two group actions. Here we shall generalize this notion to cocycle actions of groupoids.
\medskip
\begin{definition}
\label{Def: cocycle conjugacy of groupoid actions}
    Let $\cG$ be a discrete measured groupoid with unit space $(X,\mu)$ and $B = (B_{x})_{x \in X}$ and $(D_{x})_{x \in X}$ be measured fields of factors over the standard Borel space $X$. Let $(\alpha,u)$ and $(\beta,v)$ be two cocycle actions of $\cG$ on the fields $B$ and $D$ respectively. Suppose that there is a common Borel domain $X_{0}$ for $(\alpha,u)$, $(\beta,v)$, the measured field of Polish spaces $(\Iso(B_{x},D_{x}))_{x \in X}$ and the measured field of Polish groups $(\cU(D_{t(g)}))_{g \in \cG}$. Then the actions are called \textit{cocycle conjugate} if after possibly shrinking the Borel domain $X_{0}$, there is a \textit{measured field of *-isomorphisms}, i.e., a Borel section $\theta$ of the Borel field $(\Iso(B_{x},D_{x}))_{x \in X_{0}}$ and a Borel section $w$ of $(\cU(D_{t(g)}))_{g \in \cG_{0}}$ satisfying: 
    \begin{enumerate}
        \item $\theta_{t(g)} \circ \alpha_{g} \circ \theta^{-1}_{s(g)} = \Ad(w_{g}) \circ \beta_{g}$ for all $g \in \mathcal{G}_{0}$
        \item $\theta_{t(g)}(u(g,h)) = w_{g}\beta_{g}(w_{h})v(g,h)w^{*}_{gh}$ for all $(g,h) \in \mathcal{G}_{0}^{(2)}$.  
    \end{enumerate}
    If $u = v = 1$ so that $\alpha$ and $\beta$ are genuine actions, then Condition 2 becomes $w_{gh} = w_{g}\beta_{g}(w_{h})$. A measured field of unitaries $w$ satisfying this condition is called a \textit{1-cocycle} for the action $\beta$. 
\end{definition}

Recall that an automorphism $\alpha$ of a von Neumann algebra $B$ is called \textit{free} or \textit{properly outer} if for all $v \in B$, there exists an element $x \in B$ such that $\alpha(x)v \neq vx$.  If $B$ is a factor, then $\alpha$ is free if and only if it is outer. Recall that we denote by $\cG^x_x$ the isotropy groups of the groupoid. 
\medskip 
\begin{definition}
    A cocycle action $(\alpha,u)$ of a discrete measured groupoid $\cG$ with unit space $(X,\mu)$ on a measured field of factors $B = (B_{x})_{x \in X}$ is called \textit{free} if there is a Borel domain $X_{0}$ for the cocycle action such that for all $x \in X_{0}$ and for all $g \in \cG^x_x$,  $\alpha_{g}:B_{x} \rightarrow B_{x}$ is outer. In particular, a cocycle action of $\cG$ is free if and only if the restricted cocycle action of $\Isot(\cG)$ is free.  
\end{definition}

To draw a parallel with the one-one correspondence between discrete measured groupoids and csm inverse semigroups as in Section \ref{Sec: The inverse semigroup - groupoid correspondence}, we now define the notion of cocycle actions of such inverse semigroups. For a csm inverse semigroup $\cI$, and an element $v \in \cI$, we shall denote by $s(v)$ and $t(v)$ the idempotents $v^{-1}v$ and $vv^{-1}$. For any idempotent $e$, we shall denote by $p_{e}$ the corresponding projection in $\Idem(\cI)$.  
\medskip
\begin{definition}
    \label{Def: actions of inverse semigroups}
    Let $\cI$ be a csm inverse semigroups with $\Idem(\cI)$ isomorphic to $\cP(L^{\infty}(X,\mu))$. Then \textit{an action of $\cI$ on a von Neumann algebra $B$} with $L^{\infty}(X,\mu) = \mathcal{Z}(B)$ is a family of maps $\alpha_{v}: Bp_{s(v)} \rightarrow Bp_{t(v)}$ satisfying $\alpha_{v} \circ \alpha_{w} = \alpha_{vw}$ (restricting domains when necessary) for all $v,w \in \cI$ such that $vw \neq 0$ and $\alpha_{e} = 1_{p_{e}}$ for all idempotents $e$. 

    More generally, \textit{a cocycle action of $\cI$ on $B$} consists of such a family of maps $\{\alpha_{v} \; | \; v \in \cI\}$ together with a family of unitaries $(v,w) \mapsto u_{v,w} \in \cU(Bp_{t(vw)})$ such that (restricting domains when necessary) the following conditions are satisfied: 
    \begin{enumerate}
        \item $\alpha_{v} \circ \alpha_{w} = \Ad(u_{v,w})\alpha_{vw}$ for all $v,w \in \mathcal{I}$ 
        \item $\alpha_{v}(u_{w,x})u_{v,wx} = u_{v,w}u_{vw,x}$ for all $v,w,x \in \mathcal{I}$ 
        \item For any idempotent $e \in \mathcal{I}$, $\alpha_{e} = 1p_{e}$  \item For $v,w \in \mathcal{I}$, $u_{v,w} = 1p_{t(vw)}$ if $v$ or $w$ is an idempotent.
    \end{enumerate}
\end{definition}

We shall now define the crossed product of a cocycle action $(\alpha,u)$ of a discrete measured groupoid $\mathcal{G}$ with unit space $(X,\mu)$ on a measured field of factors $B = (B_{x})_{x \in X}$. This will be done in a similar way as crossed products for group actions. We shall prove in what follows that there are partial isometries $\{u(V) \; | \; V \in [[\cG]]\}$ and unitaries $\{u_{V,W} \; | \; V,W \in [[\cG]] \}$ which together with $B$ satisfy the following conditions: 

\begin{enumerate}
    \item $u(V)^{*}u(V) = 1_{s(V)}$ and $u(V)u(V)^{*} = 1_{t(V)}$
    \item $u(V)u(W) = u_{V,W}u(VW)$
    \item $u(V)bu(V)^{*} = \alpha_{V}(b)$ for all $b \in B1_{s(V)}$
    \item $E(bu(V)) = bp_{V \cap X}$ (where $p_{V \cap X}$ is the projection corresponding to the Borel subset $V \cap X$) for $b \in B$ and $V \in [[\cG]]$ is a faithful normal conditional expectation.
\end{enumerate}
\medskip 
\begin{definition}
\label{Def: cocycle crossed product for groupoid actions}
    Let $\cG$ be a discrete measured groupoid with unit space $(X,\mu)$ and $(\alpha,u)$ be a cocycle action of $\cG$ on a measured field $B = (B_{x})_{x \in X}$ of factors. Then the \textit{crossed product von Neumann algebra} denoted by $M = B \rtimes_{(\alpha,u)}\cG$ is the is the unique von Neumann algebra generated by $B$ and partial isometries $u(V)$ for all $V \in [[\cG]]$ satisfying the four conditions above. 
\end{definition}

Now we prove that we can indeed construct such a family of partial isometries. Let $B_{x} \subset \mathcal{B}(\cH_{x})$ and $\cH = (\cH_{x})_{x \in X}$ denote the corresponding measured field of Hilbert spaces. Let $\mathfrak{H} = \int_{X}^{\oplus}\cH_{x}$ denote the direct integral Hilbert space and $B = \int^{\oplus}_{X} B_{x} \subseteq \cB(\mathfrak{H})$ be the direct integral von Neumann algebra. We know that the center of $B$ is $L^{\infty}(X,\mu)$. By Proposition \ref{Prop: full pseudogroup is an inverse semigroup}, we know that $[[\cG]]$ is a csm inverse semigroup. For all $x \in X$, consider now the Hilbert space $\ell^{2}(\cG^{x})$ and let $\cK_{x} = \cH_{x} \otimes \ell^{2}(\cG^{x})$. It follows that $(\cK_{x})_{x \in X}$ is a measured field of Hilbert spaces and let $\mathfrak{K}$ be the direct integral $\mathfrak{K} = \int^{\oplus}_{X} \cK_{x}$. We can see that $B = \int_{X}^{\oplus}B_{x}$ has a faithful normal representation $\pi: B \rightarrow \cB(\mathfrak{K})$ exactly as in the case of group actions defined as follows: 
\begin{align*}
     \pi((b_{x})_{x \in X})((\xi_{x} \otimes \eta_{x})_{x \in X}) = (b_{x}\xi_{x} \otimes \eta_{x})_{x \in X} 
\end{align*}

We note now that $(\alpha,u)$ defines a cocycle action (that we still denote by $(\alpha,u)$) of the inverse semigroup $[[\cG]]$ on $B$. For an idempotent $E \subseteq X$ consider the projection $p_{E}$ as an element of $\cB(\mathfrak{K})$. Note that any element $b \in Bp_{E}$ is a decomposable operator in $\mathcal{B}(p_{E}\mathfrak{K})$ and can be written as $\int^{\oplus}_{E} b_{x}$. Now for $V \in [[\cG]]$, let $p_{s(V)}$ and $p_{t(V)}$ denote the central projections in $B$ corresponding to the Borel subsets $s(V)$ and $t(V)$ of $X$. We define the *-isomorphism $\alpha_{V}: Bp_{s(V)} \rightarrow Bp_{t(V)}$ as follows: 
\begin{align*}
    \alpha_{V}(b) = \int^{\oplus}_{t(V)} \alpha_{g}(b_{s(g)}) \; d\mu(x) 
\end{align*}
Similarly for two elements $V,W \in [[\cG]]$, denote by $g_{h}$ the unique element in $V$ such that $s(g_{h}) = t(h)$ for all elements in the Borel set $E = \{h \in W \; | \; t(h) \in s(V)\}$. It can be checked that the inverse of an injective Borel map is Borel (for example, see \cite[Section 15.A]{Kec95}). This shows since $s|_{E}$ is an injective Borel map, and hence $s^{-1} \circ t |_{E}$ given by $h \mapsto g_{h}$ is Borel. Now we define the 2-cocycle $u_{V,W} \in B1_{t(VW)}$ as follows:   
\begin{align*}
    u_{V,W} = \int^{\oplus}_{t(VW)}u(g_{h},h) \; d\mu(x)
\end{align*}

Now consider for all elements $g \in \cG$ the unitary operator $u_{g}: \cK_{s(g)} \rightarrow \cK_{t(g)}$ given by: 
\begin{align*}
    u_{g}(\xi \otimes \delta_{h}) = \xi \otimes \delta_{gh}
\end{align*}
For any bisection $V \in [[\cG]]$ we can construct a partial isometry $u(V) \in \cB(\mathfrak{K})$ with domain $\int_{s(V)}^{\oplus} \cH_{x} \otimes \ell^{2}(\cG^{x})$ and range $\int_{t(V)}^{\oplus} \cH_{x} \otimes \ell^{2}(\cG^{x})$ given by:
\begin{align*}
    u(V) = \int^{\oplus}_{V} u_{g} \; d\mu(g) 
\end{align*}
The crossed product is then the von Neumann algebra generated by $\pi(B)$ and the partial isometries $u(V) \in \cB(\mathfrak{K})$ for all $V \in [[\cG]]$. One can check that the four conditions above are satisfied, almost by construction. The direct integral von Neumann algebra $\pi(B)$ is a subalgebra of $M$ and from now on we shall abuse notation and use $B$ to denote the subalgebra, the measured field as well as the direct integral. 
\medskip

\begin{remark}
\label{Rmk: actions of groupoids and inverse semigroups are the same}
    Notice that the crossed product construction for a cocycle action of a groupoid $\cG$ with unit space $(X,\mu)$ essentially uses the action of the csm inverse semigroup $[[\cG]]$. Suppose that we started with a cocycle action $(\alpha,u)$ of a csm inverse semigroup $\cI$ on a von Neumann algebra $B$ such that $\cP(\cZ(B))$ isomorphic to $\Idem(\cI)$. Then we can similarly construct the crossed product $B \rtimes_{(\alpha,u)} \cI$. Now, let $\GI$ be the discrete measured groupoid with unit space $(X,\mu)$ from Theorem \ref{Theorem A} such that $\cI$ is isomorphic to $[[\GI]]$. Suppose that the direct integral decomposition of $B$ is $(B_{x})_{x \in X}$. We claim that the cocycle action of $[[\GI]]$ can be canonically lifted to a cocycle action of $\GI$. To see this, we first fix a basis $\cV$ for $\GI$. Let $\cV_{1}$ be the countable set of bisections formed by taking finite products of the bisections in $\cV$. Let us enumerate the elements of $\cV_{1}$ by $\cV_{1} = \{V_{0},V_{1},V_{2},...\}$. By discarding a null set $E_{n}$ from each $V_{n}$ and letting $V_{n}^{0} = V_{n} \backslash E_{n}$, we apply the disintegration of automorphisms (\cite[Corollary X.3.12]{TakesakiVolume2}) and get that the *-isomorphism $\alpha_{V}: B_{s(V)} \rightarrow B_{t(V)}$ disintegrates into a direct integral of *-isomorphisms $\alpha_{g}: B_{s(g)} \rightarrow B_{t(g)}$ for all $g \in V_{n}^{0}$. 
    
    Letting $E = \bigcup_{n} E_{n}$, we now get a *-isomorphism $\alpha_{g}: B_{s(g)} \rightarrow B_{t(g)}$ for all $g \in \GI \backslash E$. Similarly we get a null set $E'$ such that we can define the 2-cocycle $u(g,h)$ for all $g,h \in \GI \backslash E'$. Now taking $g,h,k$ outside the null set $E \bigcup E'$, notice that $\alpha_{g}(u(h,k))u(g,hk) = u(g,h)u(gh,k)$ as the corresponding 2-cocycle relation is satisfied by the action of the inverse semigroup. Similarly we have that $\alpha_{g} \circ \alpha_{h} = \Ad (u(g,h)) \circ \alpha_{gh}$ for all $g,h$ outside $E \bigcup E'$. Similarly one can check that the four points in Definition \ref{2-cocycle on groupoids} are satisfied for a.e. element $g,h \in \GI$. We therefore get a cocycle action of $\GI$ on the measured field $(B_{x})_{x \in X}$ such that $B \rtimes_{(\alpha,u)} \cI$ is isomorphic to $(B_{x})_{x \in X} \rtimes_{(\alpha,u)} \GI$, by construction of the cocycle crossed product. We shall abuse notation throughout this section and use $(\alpha,u)$ to denote the cocycle actions of both the groupoid and the inverse semigroup, depending on the context.
\end{remark}

As before, consider such a cocycle crossed product $M = B \rtimes_{(\alpha,u)} \cG$ represented on the Hilbert space $\mathfrak{K} = \int_{x}^{\oplus} \cK_{x}$ where $\cK_{x} = \ell^{2}(\cG^{x}) \otimes \cH_{x}$ and where $B$ has central decomposition $\int^{\oplus}_{x} B_{x}$ with $B_{x} \subseteq \cB(\cH_{x})$.
Denote by $\mathfrak{H}$ the direct integral $\int^{\oplus}_{X}\cH_{x}$. Since $\cK_{x}$ is also isomorphic to $\ell^{2}(\cG^{x}, \cH_{x})$, for each bisection $V \in [[\cG]]$, consider the partial isometries $P_{V}:\int_{s(V)}^{\oplus} \cK_{x} \rightarrow \int_{t(V)}^{\oplus} \cH_{x}$ where: 
\begin{align*}
    P_{V}(\xi_{s(g)}) = \xi_{t(g)}(g) \text{ for all } g \in V \text{ and } \xi = (\xi_{x})_{x \in s(V)}
\end{align*}
In particular for the unit space $X$, we have a partial isometry $P_{X}: \mathfrak{K} \rightarrow \mathfrak{H}$ given by $P_{X}(\xi_{y}) = \xi_{y}(y)$ for all $y \in X$ where by abusing notation $y$ also denotes the identity element in $\cG^{y}$. Now consider the map $E: M \rightarrow B$ given by $E(y) = P_{X}yP_{X}^{*}$. One can check, exactly as in \cite[Pg 255]{Mercer85} or \cite[Pg 365]{TakesakiVolume1} that $E$ indeed is a faithful normal conditional expectation onto $B$. 

Now we define the following topology on $M$: consider the seminorm $\zeta_{\omega}$ given by $\zeta_{\omega}(x) =\omega \circ E(x^{*}x)^{\frac{1}{2}}$ for a faithful normal state $\omega \in B_{*}$. The locally convex topology generated by the family of seminorms
\begin{align*}
    \{\zeta_{\omega} \; | \; \omega \text{ faithful normal state in} B_{*} \}    
\end{align*}
This is called the $\mathfrak{U}$-topology by Mercer in \cite{Mercer85} and to the author's knowledge, was first documented by Bures in \cite{Bures71}. Recall that every faithful normal state $\phi$ on a von Neumann algebra induces a sharp norm given by  $\|x\|_{\phi}^{\sharp} = \sqrt{|\phi(x^{*}x)| + |\phi(xx^{*})}$. One can check that a net $x_{i} \rightarrow x$ in the $\mathfrak{U}$-topology if and only $\|x_{i} \rightarrow x\|^{\sharp}_{\omega \circ E} \rightarrow 0$ for any faithful normal state $\omega \in B_{*}$. 

Suppose now that we have a symmetric basis $\cV$ for the groupoid $\cG$. For all $x \in M$, let $x_{V} = E(x^{*}u(V))$ for all $V \in \cV$. Exactly the same argument as in \cite[Proposition 3.5]{ber-chak-don-kim} shows that $\sum_{V \in \cV} x_{V}u(V)$ converges to $x$ under the sharp norm for any faithful normal state of the form $E \circ \omega$ on $M$, where $\omega$ is a faithful normal state on $B$. Notice that this is true for a general basis as well, but the calculations are much easier if we consider symmetric bases. This indeed gives a regular inclusion as in the following lemma, and a proof appears in \cite[Section 2.2]{Chakraborty24a}
\medskip
\begin{lemma}
\label{Lemma: B is regular in crossed product M}
    Let $\cG \actson_{(\alpha,u)} B = (B_{x})_{x \in X}$ be a cocycle action as above. Then $B$ is a regular subalgebra of the crossed product $M = B \rtimes_{(\alpha,u)} \cG$.  
\end{lemma}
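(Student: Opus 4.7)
The plan is to combine the Fourier-type expansion recalled immediately before the lemma with the normalization properties of the partial isometries $u(V)$, and then to upgrade each normalizing partial isometry to a unitary normalizer of $B$ in $M$. From properties (1) and (3) of Definition \ref{Def: cocycle crossed product for groupoid actions}, $u(V)^{*}u(V) = p_{s(V)}$ and $u(V)u(V)^{*} = p_{t(V)}$ lie in $\cZ(B)$, and $u(V)Bp_{s(V)}u(V)^{*} = \alpha_{V}(Bp_{s(V)}) = Bp_{t(V)}$; applying the same identity to $V^{-1}$ gives the reverse inclusion. Hence every $u(V)$ lies in the inverse semigroup $\cP$ of normalizing partial isometries from the introduction.

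Next I would fix a symmetric basis $\cV$ for $\cG$, which exists by \cite[Proposition 3.1]{ber-chak-don-kim}. The expansion recalled just before the lemma says that for every $x \in M$, the finite partial sums of $\sum_{V \in \cV} x_{V} u(V)$ converge to $x$ in the sharp norm associated to $\omega \circ E$ for any faithful normal state $\omega$ on $B$. Since such sharp norms dominate the $*$-strong operator topology on bounded sets, $M$ is generated as a von Neumann algebra by $B$ together with $\{u(V) : V \in \cV\}$.

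Finally, to replace each $u(V)$ by an honest unitary normalizer I would extend $V$ to a global bisection $\tilde V \in [\cG]$. Concretely, refine $\cV$ if necessary so that $s(V)^{c}$ and $t(V)^{c}$ can each be written as countable disjoint unions of sources (respectively targets) of bisections in $\cV$, and pair these pieces measurably to produce auxiliary bisections $W_{1}, W_{2}, \dots$ with $\bigsqcup_{n} s(W_{n}) = s(V)^{c}$ and $\bigsqcup_{n} t(W_{n}) = t(V)^{c}$; then $\tilde V = V \sqcup \bigsqcup_{n} W_{n}$ is the required global bisection, and $u(\tilde V)$ is a unitary in $\cN_{M}(B)$ with $u(V) = u(\tilde V) p_{s(V)}$. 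This expresses each $u(V)$ in terms of $B$ and $\cN_{M}(B)$, and combining with the previous step shows that $\cN_{M}(B)$ generates $M$, i.e. $B$ is regular in $M$. The main obstacle is this extension step: when $\cG$ is non-ergodic, mass must be conserved on each ergodic component when matching source and target complements, and the measurable construction of the pairing requires a Jankov--von Neumann selection carried out along the ergodic decomposition of $\cG$ in the spirit of \cite{WoutersVaes24}; in the ergodic case the transitivity of the full group on the unit space modulo null sets makes the extension immediate.
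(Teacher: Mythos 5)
Your first two steps are sound: each $u(V)$ is a partial isometry with $u(V)^{*}u(V)=1_{s(V)}$ and $u(V)u(V)^{*}=1_{t(V)}$ in $\cZ(B)$ and $u(V)Bu(V)^{*}=B1_{t(V)}$, and $M$ is generated by $B$ together with the $u(V)$ (this already follows from Definition \ref{Def: cocycle crossed product for groupoid actions} and the completeness of $[[\cG]]$, so the Fourier expansion is not really needed there). The gap is in the final upgrade to unitary normalizers: it is \emph{not} true that every bisection $V$ extends to a global bisection. Such an extension requires a bisection $W$ with $s(W)=s(V)^{c}$ and $t(W)=t(V)^{c}$, i.e.\ that $s(V)^{c}$ and $t(V)^{c}$ be equivalent in $[[\cG]]$, and this can fail: if $\cG$ is an ergodic equivalence relation of type $\mathrm{II}_{\infty}$ with infinite invariant measure $m$, choose $V$ with $m(s(V))=m(t(V))=\infty$ but $m(s(V)^{c})=1$ and $m(t(V)^{c})=2$; then no such $W$ exists. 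Your appeal to ``transitivity of the full group modulo null sets'' in the ergodic case only covers the pmp and type $\mathrm{III}$ situations, and in the non-ergodic case the ``measurable pairing'' is asserted rather than shown to exist. (For what it is worth, the paper itself does not prove this lemma but cites \cite[Section 2.2]{Chakraborty24a}.)

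The standard repair avoids global bisections altogether. Decompose $V=(V\cap X)\sqcup V_{\mathrm{iso}}\sqcup V_{\mathrm{fr}}$, where $V\cap X$ only contributes the central projection $1_{V\cap X}\in B$, where $V_{\mathrm{iso}}\subset \Isot(\cG)\setminus\cG^{(0)}$, and where $V_{\mathrm{fr}}$ is the part on which $s(g)\neq t(g)$. For $V_{\mathrm{iso}}$ one has $p:=1_{s(V_{\mathrm{iso}})}=1_{t(V_{\mathrm{iso}})}$, and $u(V_{\mathrm{iso}})+(1-p)$ is a unitary in $\cN_{M}(B)$ with $u(V_{\mathrm{iso}})=\bigl(u(V_{\mathrm{iso}})+(1-p)\bigr)p$. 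For $V_{\mathrm{fr}}$, partition $s(V_{\mathrm{fr}})$ into countably many Borel sets $E_{n}$ so that each $W_{n}:=V_{\mathrm{fr}}E_{n}$ satisfies $s(W_{n})\cap t(W_{n})=\emptyset$ (the usual Borel ``three sets'' argument for a fixed-point-free partial isomorphism); writing $p_{n}=1_{s(W_{n})}$ and $q_{n}=1_{t(W_{n})}$, which are orthogonal central projections, the element $u(W_{n})+u(W_{n})^{*}+(1-p_{n}-q_{n})$ is a self-adjoint unitary normalizing $B$, and $u(W_{n})$ equals this unitary times $p_{n}$. Summing over $n$ recovers $u(V)$ inside the von Neumann algebra generated by $B$ and $\cN_{M}(B)$, which together with your second step proves the lemma.
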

\medskip
\begin{lemma}
    \label{Lemma: B satisfies RCC in crossed product M}
    Let $\cG \actson_{(\alpha,u)} B = (B_{x})_{x \in X}$ be as above and $M$ be the crossed product. Then the relative commutant condition is satisfied, i.e.,  $B'\cap M = \mathcal{Z}(B) = L^{\infty}(X,\mu)$ if and only if the cocycle action is free. 
\end{lemma}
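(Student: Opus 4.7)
The plan is to prove both directions of the equivalence by invoking the Fourier expansion $x = \sum_{V \in \cV} x_V u(V)$ recalled just before the statement --- for a symmetric basis $\cV$ of $\cG$ containing $V_0 = X$ --- and then reducing the analysis to fibrewise computations on the measured field $(B_x)_{x \in X}$.

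For the forward direction, I assume $(\alpha, u)$ is free and pick $x \in B' \cap M$. Writing $x = \sum_{V \in \cV} x_V u(V)$ with $x_V \in B p_{t(V)}$, the identity $u(V) b u(V)^* = \alpha_V(b p_{s(V)})$ lets me compare Fourier coefficients in $bx = xb$ --- the comparison is legitimate because $E(u(V) u(W)^*) = 0$ for distinct basis elements --- yielding $b\, x_V = x_V\, \alpha_V(b p_{s(V)})$ for every $V \in \cV$ and every $b \in B$. The coefficient at $V = X$ reads $b x_X = x_X b$, so $x_X \in \cZ(B)$. For any $V \neq X$ in $\cV$, the set $V \cap X$ is null; disintegrating $x_V = \int^{\oplus}_{t(V)} x_V(y)\, d\mu(y)$, the relation at $y = t(g)$ with $g$ the unique element of $V$ over $y$ reads $b_y\, x_V(y) = x_V(y)\, \alpha_g(b_{s(g)})$. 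If $s(g) \neq y$, testing with $b$ supported at $y$ and vanishing at $s(g)$ forces $x_V(y) = 0$. If $s(g) = y$, then $g \in \cG^y_y$ is a nontrivial isotropy element, and a short adjoint computation on this relation shows that both $x_V(y)^* x_V(y)$ and $x_V(y) x_V(y)^*$ lie in $\cZ(B_y) = \C$; a nonzero $x_V(y)$ would therefore be a scalar multiple of a unitary $w \in \cU(B_y)$ with $\alpha_g = \Ad(w^*)$, contradicting freeness. Hence $x_V = 0$ for every $V \neq X$, and $x = x_X \in \cZ(B)$.

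For the converse, assume $(\alpha, u)$ is not free, so that the Borel subset $F := \{g \in \Isot(\cG) \setminus \cG^{(0)} : \alpha_g \text{ is inner on } B_{s(g)}\}$ has positive measure. Applying the Jankov--von Neumann selection theorem to the Borel set $\{(g, v) : g \in F,\; v \in \cU(B_{s(g)}),\; \Ad v = \alpha_g\}$ --- which is Borel in the framework of \cite{WoutersVaes24} --- I obtain a measurable section $g \mapsto v_g$ of unitaries implementing $\alpha_g$. Since $F$ is fibred over a Borel subset of $\cG^{(0)}$ with at most countable fibres, it contains a bisection $V \subseteq F$ of positive measure, and necessarily $s(V) = t(V)$. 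Setting $y_V := \int^{\oplus}_{t(V)} v_g^*\, d\mu(y) \in B p_{t(V)}$ and $y := y_V\, u(V) \in M$, the identity $\alpha_g = \Ad v_g$ together with a short fibrewise check gives $by = yb$ for every $b \in B$, so $y \in B' \cap M$. On the other hand $V \cap \cG^{(0)}$ is null and $y_V \neq 0$, so uniqueness of the Fourier expansion shows $y \notin B$ and in particular $y \notin \cZ(B)$.

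I expect the main technical obstacle to be twofold: (i) justifying the coefficient-matching step rigorously, which requires understanding the behaviour of $E$ on products $u(V) u(W)^*$ for distinct basis elements, and (ii) carrying out the measurable selection of the unitaries $v_g$ cleanly within the measured-field framework of \cite{WoutersVaes24}. Both are routine in spirit --- the fibrewise computations themselves are direct analogues of the group case treated in \cite{Choda79} --- but the combinatorial and measurability bookkeeping is where the actual work lies.
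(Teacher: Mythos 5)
Your proposal is correct and follows essentially the same route as the paper: Fourier decomposition over a basis, matching coefficients to get $b\,x_V = x_V\,\alpha_V(b)$, killing the coefficients over non-isotropy elements by testing against central projections separating $s(g)$ from $t(g)$, and using the factor structure of the fibres to turn a nonzero isotropy coefficient into a unitary implementing $\alpha_g$, contradicting freeness; the converse via the element $y_V\,u(V)$ is likewise the paper's argument (stated there more tersely, without spelling out the measurable selection of the $v_g$). The one bookkeeping point you flag but do not carry out --- passing from a $b$-dependent null set of $g$'s to a single null set, via a countable $\sigma$-weakly dense subset of the unit ball of $B$ --- is exactly what the paper does explicitly, and is needed before you may test with a $b$ chosen in terms of $g$.
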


\begin{proof}
    Let $\cV = \{V_{0} = X, V_{1},V_{2},...\}$ be a basis for the groupoid $\cG$. Then any element $a \in M$ admits a Fourier decomposition and can be written as $\sum_{n}a_{n}u(V_{n})$ where $a_{n} \in B$. Now if $a$ commutes with every element $b \in B$, this implies that: 
    \begin{align*}
        \sum_{n} ba_{n}u(V_{n}) = \sum_{n} a_{n}u(V_{n})b = \sum_{n}a_{n}\alpha_{V_{n}}(b)u(V_{n})
    \end{align*}
    This means $ba_{n} = a_{n}\alpha_{V_{n}}(b)$ for all $b \in B$ and for all $n \in \mathbb{N}$. Now if $a \notin \mathcal{Z}(B)$ then for some $k \neq 0$, we have that $a_{k} \neq 0$. This can happen if and only if every $b \in B$ satisfies $ba_{k} = a_{k}\alpha_{V_{k}}(b)$. Now for ease of notation let us denote $a = a_{k}$, $V = V_{k}$, and without loss of generality, assume that the right central support of $a$ is $t(V)$. Thus by looking at the decompositions of $b$ and $a$, in the ergodic decomposition of $B$, we have that for all $b \in B$, there exists a null set $W_{b} \subset V$ such that for all $g \in V \backslash W_{b}$: 
    \begin{align*}
        b_{t(g)}a_{t(g)} = a_{t(g)}\alpha_{g}(b_{s(g)})   
    \end{align*}
    Now since $B$ has a separable predual, we can pick a countable subset $B_{0}$ in the unit ball of $B$ that is $\sigma$-weakly dense. Let us denote the elements of $B_{0}$ by $\{b_{1},b_{2},...\}$. For each $i$, we then have a null set $W_{n}$ such that outside $W_{n}$, the above equation holds. Then taking $W = \bigcup_{n}W_{n}$, we have that for all $b \in B_{0}$ and for all $g \in V \backslash W$, the equation $b_{t(g)}a_{t(g)} = a_{t(g)}\alpha_{g}(b_{s(g)})$. By the $\sigma$-weak continuity of the maps $B \rightarrow B$ given by $b \mapsto ba$ and $b \mapsto a\alpha_{g}(b)$, the equation holds for all $g \in V \backslash W$ and all $b \in B$. Now suppose that for some $g \in V \backslash W$, $s(g) \neq t(g)$. Let us pick a Borel subset $E \subset X$ such that $s(g) \in E$ and $t(g) \notin E$. Let $p$ be the corresponding central projection in $B$. Therefore it follows that:  
    \begin{align*}
        p_{t(g)}a_{t(g)} = a_{t(g)}\alpha_{g}(p_{s(g)}) \implies 0 = a_{t(g)}
    \end{align*}
    Since we assumed that $a$ is supported on $t(V)$, this is a contradiction and $V$ is a bisection in $\Isot(\cG)$. Now for all $g \in V \backslash W$, since $B_{s(g)}$ is a factor, we can find a unitary $u_{s(g)}$ such that $b_{s(g)}u_{s(g)} = u_{s(g)}\alpha_{g}(b_{s(g)})$ for all $b \in B$, hence contradicting the freeness of $\alpha$.  
    
    Conversely, if the action is not free, there exists a bisection $V$ of $\Isot(\cG)$ and an element $x \in B$ such that $bx = x\alpha_{V}(b)$ for all $b \in B$. Then the element $xu(V)$ does not belong to $L^{\infty}(\cG^{(0)})$ and still commutes with every element in $B$, completing the proof.  
\end{proof}

\begin{lemma}
    \label{Lemma: ergodic groupoid if and only if crossed product factor}
    Let $(\alpha,u)$ be a free cocycle action of $\cG$ on $B = (B_{x})_{x \in X}$ as above and $M$ be the crossed product. Then $\cG$ is ergodic if and only if $M$ is a factor. 
\end{lemma}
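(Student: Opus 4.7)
The plan is to identify the center of $M$ explicitly as the algebra of $\cG$-invariant functions in $L^\infty(X,\mu)$, from which the equivalence with ergodicity is immediate by definition.

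First, I would observe that since the cocycle action is free, Lemma \ref{Lemma: B satisfies RCC in crossed product M} gives $B' \cap M = \cZ(B) = L^\infty(X,\mu)$. Since $B \subseteq M$, we have $\cZ(M) = M \cap M' \subseteq B' \cap M = L^\infty(X,\mu)$. So every central element is already a bounded Borel function on the unit space, and the problem reduces to determining which $f \in L^\infty(X,\mu)$ commute with everything in $M$.

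Next, since $M$ is generated by $B$ and the partial isometries $u(V)$ for $V \in [[\cG]]$, an element $f \in L^\infty(X,\mu)$ lies in $\cZ(M)$ if and only if $f u(V) = u(V) f$ for every bisection $V$. Using $u(V) = u(V) p_{s(V)} = p_{t(V)} u(V)$, the commutation of $f$ with elements of $B$, and the covariance relation $u(V) b u(V)^* = \alpha_V(b)$ for $b \in B p_{s(V)}$, a short computation reduces this to the condition
\begin{equation*}
f\, p_{t(V)} \;=\; \alpha_V\bigl(f\, p_{s(V)}\bigr) \qquad \text{in } \cZ(B)\, p_{t(V)}.
\end{equation*}
Now the restriction of $\alpha_V$ to $\cZ(B)\, p_{s(V)} = L^\infty(s(V))$ is, by construction of the crossed product, exactly the Borel isomorphism $L^\infty(s(V)) \to L^\infty(t(V))$ induced by the bijection $s(V) \to t(V)$ given by $s(g) \mapsto t(g)$ for $g \in V$. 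Hence $f$ commutes with $u(V)$ iff $f(s(g)) = f(t(g))$ for $\mu_s$-a.e. $g \in V$.

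Fixing a countable basis $\{B_n\}$ of $\cG$ (as in \cite{ber-chak-don-kim}), we have $\bigsqcup_n B_n = \cG$ up to a null set, and any bisection decomposes compatibly via the join operation in $[[\cG]]$. Consequently $f \in \cZ(M)$ iff $f(s(g)) = f(t(g))$ for a.e. $g \in \cG$, which is exactly the definition of $\cG$-invariance for the equivalence relation $\{(t(g),s(g)) : g \in \cG\}$. Therefore $\cZ(M) = L^\infty(X,\mu)^{\cG}$, and by the definition of ergodicity recalled in Section 2.2, this is trivial if and only if $\cG$ is ergodic, completing the proof. The only mildly subtle point, which I would double-check carefully during the write-up, is that checking commutation against the countable family $\{u(B_n)\}$ is enough, but this follows routinely from the fact that $u$ intertwines finite joins of orthogonal bisections.
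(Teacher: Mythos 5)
Your argument is correct and follows essentially the same route as the paper: the paper's proof is a two-line appeal to Lemma \ref{Lemma: B satisfies RCC in crossed product M} (freeness gives $\cZ(M)\subseteq B'\cap M=L^{\infty}(X,\mu)$) together with the observation that the central elements of $L^{\infty}(X,\mu)$ are exactly the $\cG$-invariant functions. You have merely written out the commutation computation with the $u(V)$ that the paper leaves implicit.
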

\begin{proof}
    This follows immediately from Lemma \ref{Lemma: B satisfies RCC in crossed product M} as the set of elements $a \in L^{\infty}(X,\mu)$ commuting with all of $M$ is precisely $L^{\infty}(X,\mu)^{\cG}$, the $\cG$-invariant functions, and this is equal to $\mathbb{C}$ if and only if $\cG$ is ergodic. 
\end{proof}

From the previous lemmas it is clear that for a discrete measured groupoid $\cG$ with unit space $(X,\mu)$ and a cocycle action $(\alpha,u)$ on a measured field $B = (B_{x})_{x \in X}$ of factors, denoting by $M = B \rtimes_{(\alpha,u)} \cG$ we have that $B$ is a regular subalgebra of $M$ satisfying the relative commutant condition $\mathcal{Z}(B) = B' \cap M$ and with a faithful normal conditional expectation $E: M \rightarrow B$. Now we shall prove a converse of this and construct a groupoid and a cocycle action from such a regular inclusion. 
\medskip 
\begin{theorem}
    \label{Thm: groupoid from regular inclusion}
    Let $B \subset M$ be a regular subalgebra of a von Neumann algebra with a faithful normal conditional expectation $E: M \rightarrow B$. Let $\mathcal{Z}(B) = L^{\infty}(X,\mu)$ and suppose that the relative commutant condition $\mathcal{Z}(B) = B'\cap M$ is satisfied. 
    
    Then there is a discrete measured groupoid $\cG = \cG_{B \subset M}$ with unit space $(X,\mu)$ and a free cocycle action $(\alpha,u)$ of $\cG$ on the central decomposition $B = (B_{x})_{x \in X}$ such that there is a von Neumann algebra isomorphism $\theta: M \rightarrow B \rtimes_{(\alpha,u)} \cG$ with $\theta(B) = B$ and $\theta \circ E = E_{\cG} \circ \theta$ where $E_{\cG}$ is the canonical conditional expectation from the crossed product to $B$. Moreover $\cG_{B \subset M}$ is ergodic if and only if $M$ is a factor. 
\end{theorem}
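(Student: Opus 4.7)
The plan is to extract a csm inverse semigroup $\cI$ from the normalizer of $B$ in $M$, apply Theorem \ref{Theorem A} to produce $\cG$, disintegrate to obtain a cocycle action on the central fibres, and then identify $M$ with the resulting crossed product via a Fourier-type expansion. Let $\cP \subseteq M$ be the inverse semigroup of partial isometries $v$ with $vv^*, v^*v \in \cZ(B)$ and $vBv^* = Bvv^*$. Every unitary in $\cN_M(B)$ lies in $\cP$, so regularity gives $\cP'' = M$. Quotienting $\cP$ by the relation $v \sim w$ when $vv^* = ww^*$, $v^*v = w^*w$ and $v = bw$ for some partial isometry $b \in Bww^*$ yields an inverse semigroup $\cI$ whose idempotents are in natural bijection with the projection lattice of $\cZ(B) = L^\infty(X,\mu)$. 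Countable suprema of pairwise orthogonal classes exist by summing Borel representatives supported on disjoint central projections, and separability of $M_*$ produces a countable generating set, so $\cI$ is csm. Theorem \ref{Theorem A} then produces the quotient groupoid $\cG$ with $[[\cG]] \cong \cI$.

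Next I build the cocycle action. Using the Jankov--von Neumann theorem I pick a Borel section $V \mapsto \hat V \in \cP$ of the quotient $\cP \to \cI$, and set $\alpha_V(b) = \hat V b \hat V^*$ on $Bp_{s(V)}$. Because $\hat V \hat W$ and $\widehat{VW}$ represent the same element of $\cI$, they differ by a unitary $u_{V,W} \in \cU(Bp_{t(VW)})$, giving the 2-cocycle; the axioms of Definition \ref{Def: actions of inverse semigroups} reduce to associativity in $M$. Fibrewise disintegration of automorphisms along a countable generating set of $\cI$, as in Remark \ref{Rmk: actions of groupoids and inverse semigroups are the same} and \cite[Corollary X.3.12]{TakesakiVolume2}, promotes $(\alpha,u)$ to a cocycle action of $\cG$ on the measured field of factors $(B_x)_{x \in X}$ on a common Borel domain. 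Freeness of the action is then forced by $B' \cap M = \cZ(B)$: this is essentially the reverse implication of Lemma \ref{Lemma: B satisfies RCC in crossed product M}, since any failure of outerness of some isotropy $\alpha_g$ on a fibre would produce, after measurable patching of implementing unitaries, an element of $B' \cap M \setminus \cZ(B)$.

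For the isomorphism, I fix a symmetric basis $\cV$ of $\cG$ with Borel lifts $\hat V \in \cP$ and define $\theta: M \to B \rtimes_{(\alpha,u)} \cG$ on the $*$-algebra generated by $B$ and the $\hat V$ by $\theta(b\hat V) = bu(V)$. The four defining relations preceding Definition \ref{Def: cocycle crossed product for groupoid actions} match the identities computed from $\hat V \hat W = u_{V,W}\widehat{VW}$ and $\hat V b \hat V^* = \alpha_V(b)$ together with the corresponding expression for $E(b\hat V)$, so $\theta$ is a well-defined $*$-homomorphism intertwining the two conditional expectations onto $B$. By regularity, every $x \in M$ admits a Fourier expansion with $B$-valued coefficients indexed by $\cV$, converging in the sharp norm $\|\cdot\|^\sharp_{\omega \circ E}$ as in the proof of \cite[Proposition 3.5]{ber-chak-don-kim}, and the same expansion holds inside the crossed product; this simultaneously yields injectivity, surjectivity and normality of $\theta$. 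The ergodicity statement is then Lemma \ref{Lemma: ergodic groupoid if and only if crossed product factor}.

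The main obstacle I expect is the measurable-selection bookkeeping: passing between $\cP$ and $\cI$ Borel-measurably, choosing Borel lifts $\hat V$ of a basis of $\cG$, and disintegrating $(\alpha,u)$ into fibrewise automorphisms and unitaries in such a way that all cocycle identities hold on a common full-measure Borel domain. The framework of \cite{WoutersVaes24} is tailored for exactly this, but arranging every countable identity to fail only on a single null set, and checking independence of the construction from the choices up to cocycle conjugacy, is the technical heart of the argument.
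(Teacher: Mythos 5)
Your proposal follows essentially the same route as the paper: the same inverse semigroup $\cP/\sim$ built from normalizing partial isometries with central source and range, Theorem \ref{Theorem A} to produce the quotient groupoid, a lift of $\cI$ back to $\cP$ whose failure to be multiplicative yields the 2-cocycle, disintegration to a cocycle action on the central fibres, and a Fourier-expansion identification of $M$ with the crossed product (the paper implements this last step spatially, via a unitary between the two GNS Hilbert spaces, rather than by extending a densely defined $*$-homomorphism, and it runs the freeness argument at the level of bisections rather than fibrewise, which avoids the measurable patching you anticipate). The one step you assert too quickly is that separability of $M_*$ gives a countable generating set for $\cI$: the paper takes a maximal family in $\cP$ that is pairwise orthogonal under $E$ (countable because the corresponding vectors are orthogonal in the separable space $L^2(M,\omega_0\circ E)$) and then crucially uses the relative commutant condition $B'\cap M=\cZ(B)$ to show that $E(v_n^{*}v)$ equals $v_n^{*}v$ cut down by a central projection, which is exactly what makes that maximal family a generating set.
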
 
\begin{proof}
      Let $B = (B_{x})_{x \in X}$ be the measured field of factors arising from the direct integral decomposition of $B$. Let $\cP$ denote the set of partial isometries $v \in M$ such that $s(v) \coloneqq v^{*}v \in \cZ(B)$, $r(v) \coloneqq vv^{*} \in \cZ(B)$ and $vBv^{*} = Br(v)$. For such a $v \in \cP$, we immediately have \textit{Fact 1}: If $b$ is a unitary in $\cU(Br(v))$, then $vbv^{*} = b'vv^{*}$ for some unitary $b' \in \cU(Br(v))$. We now identify two partial isometries $w \sim v$ in $\cP$ if $s(v)=s(w), r(v) = r(w)$ and there exists a partial isometry $b \in B$ with $b^{*}b = bb^{*} = r(v)$ and $w = bv$ or equivalently if there exists a unitary $b \in \cU(Br(v))$ such that $w = bv$.  \textit{Fact 2}: $v \sim w$ if and only if there exists a unitary $b' \in \cU(Bs(v))$ such that $w = vb'$. To realize this, if indeed $bv = w$ for some $b$, then $v^{*}w = v^{*}bv = b'v^{*}v$ for some $b' \in \cU(Bs(v))$ by Fact 1, and then we have that $w = vv^{*}w = vb'v^{*}v = vb'$. By a symmetric argument for the converse, we have a proof of Fact 2. 
       
       It is easy to check that $\sim$ gives an equivalence relation on $\cP$. Now let us denote the quotient by $\cI = \cP / \sim$. We will show that $\cI$ has a well defined inverse semigroup structure. Indeed if $v_{1} = b_{1}w_{1}$ and $v_{2} = b_{2}w_{2}$ then we can pick a unitary $b \in \cU(Br(w_{1}))$ such that $w_{1}b_{2}w_{1}^{*} = bw_{1}w_{1}^{*}$ and consequently:  
       \begin{align*}
           v_{1}v_{2} = b_{1}w_{1}b_{2}w_{2} = b_{1}w_{1}w_{1}^{*}w_{1}b_{2}w_{2} = b_{1}w_{1}b_{2}w_{1}^{*}w_{1}w_{2} = (b_{1}b)w_{1}w_{2}
       \end{align*}
       Similarly if $v \sim w$ and $w = bv$ then $w^{*} = v^{*}b^{*}$ and by Fact 2, we have that $v^{*} \sim w^{*}$. One can easily check that the idempotents in $\cI$ are precisely the central projections in $B$. Since any two such central projections are equivalent if and only if they are equal, we have that $\Idem(\cI)$ is $\cP(A)$ where $A = L^{\infty}(X,\mu)$ and $\cI$ is a measured inverse semigroup. For a countable family of partial isometries $v_{n}$ with orthogonal source and ranges, the sequence $\sum_{i=1}^{n}v_{k}$ converges in SOT to the limit which is also a partial isometry $v = \sum_{n} v_{n}$ thus making the semigroup complete. This is in fact well defined as if we have $v_{n} = bw_{n}$ for all $n$, then the sources and ranges of $w_{n}$'s are orthogonal and consequently we have $\sum_{n}v_{n} = (\sum_{n}b_{n})(\sum_{n}w_{n})$, as required.  
       
       We call two partial isometries $v,w \in \mathcal{P}$ orthogonal under $E$ if $E(v^{*}w) = 0$. \textit{Fact 3}: Any subset $P \subset \mathcal{P}$ consisting of elements that are pairwise orthogonal under $E$ is countable. To prove this, consider a faithful normal state $\omega_{0}$ on $B$, then $\omega = \omega_{0} \circ E$ is a faithful normal state on $M$. Consider the GNS representation of $M$ on the Hilbert space $\cH = L^{2}(M,\omega)$. Since $M$ has a separable predual, $\cH$ is a separable Hilbert space. For two partial isometries $v,w \in \cP$, if $E(v^{*}w) = 0$, then for the corresponding vectors $\hat{v}$ and $\hat{w}$ in $\cH$, we have that $\innerproduct{\hat{v}}{\hat{w}} = 0$ by definition. Thus the set $\{\hat{v} \; | \; v \in P\}$ consists of pairwise orthogonal vectors in the Hilbert space and by separability it is countable. Thus $P$ is countable, proving Fact 3. By an application of Zorn's Lemma, we can now pick a maximal countable set $P$ in $\cP$ of elements orthogonal under $E$. Let us denote by $\overline{v}$ the equivalence class of a partial isometry $v$ in $\cI$. Taking $v_{0} = 1$, we claim that the corresponding set of elements $\{\overline{v_{0}},\overline{v_{1}},\overline{v_{2}}...\}$ in $\cI$ forms a countable generating set. 
       
       To prove this, we first make \textit{Claim 4}: for all $v \in \cP$, there exists a unique central projection $z \in \cZ(B)$ such that $E(v) = vz = zv$. To prove the claim: notice that for all $b \in Bs(v)$, there exists an element $\alpha(b) \in Br(v)$ such that $vbv^{*} = \alpha(b)$. This implies that $vb = \alpha(b)v$ and hence $E(v)b = \alpha(b)E(v)$. Now one can check that $\alpha: Bs(v) \rightarrow Br(v)$ is a *-isomorphism. Let $z$ be the maximal central projection such that $\alpha|_{Bz}$ is inner. So $\alpha(b) = cbc^{*}$ for all $b \in Bz$ for some unitary $c \in \cU(Bz)$. Then $c^{*}v$ commutes with all elements in $B$, and hence $c^{*}v \in B' \cap M$. By the relative commutant condition, we have that $c^{*}v \in \cZ(B)$. Now we note that for all $b \in Bc^{*}v$, we indeed have the equality $vbv^{*} = cbc^{*}$ and hence by maximality of $z$, we have $c^{*}v = z$. It follows immediately that $E(v) = vz = zv$. Now we make \textit{Claim 5}: for all $v \in \cP$, we have $v = \bigvee_{n}v_{n}E(v_{n}^{*}v)$. By Claim 4, we have that $v_{n}E(v_{n}^{*}v) = r(v_{n})vz_{n} = vz_{n}$ for central projections $z_{n}$. One can check that since $v_{n}$ are orthogonal under $E$, the central projections $z_{n}$ are mutually orthogonal. Thus the only thing left to prove is that $\sum_{n} vz_{n} = v$. To check this suppose that $z^{c} = 1 - \sum_{n}z_{n}$ and suppose $vz^{c} \neq 0$. Then $vz^{c}$ is a partial isometry in $\cP$ which is orthogonal under $E$ to all $v_{n}$'s, thus contradicting the maximality of $P$. Now with the claim proved, let $b_{n} = E(v_{n}^{*}v)$ and note that $b_{n}$'s have pairwise orthogonal domains and ranges. We have that $v_{n}b_{n} \sim v_{n}$ and hence $\overline{v} =\bigvee_{n} \overline{v_{n}}\overline{z_{n}}$ for orthogonal idempotents $z_{n}$. Therefore $\mathcal{I}$ is separable and hence a csm inverse semigroup. 

       Next we construct a cocycle action of $\cI$ on $B$. For a partial isometry $v \in \cP$, we have a *-isomorphism $\alpha_{v}: Bs(v) \rightarrow Br(v)$ defined by $b \mapsto vbv^{*}$. We shall see now that this induces a well defined cocycle action of $\cI$ on $B$. Let $i$ be a lift from $\mathcal{I}$ to $\mathcal{P}$, i.e., for an equivalence class $V \in \mathcal{I}$, we choose an element $v = i(V) \in \mathcal{P}$ such that $v \in \overline{v}$. Moreover let us choose $i$ in such a way that for every $n$, $i(\overline{v_{n}}) = v_{n}$. In what follows, we shall denote the elements of $\cI$ by $\overline{v}$, where $v$ is the unique element such that $i(\overline{v}) = v$. As the lift is typically not unique, $i$ is not necessarily a homomorphism. However, for every pair $\overline{v},\overline{w} \in \mathcal{I}$ there exists a unitary $u_{\overline{v},\overline{w}} \in \cU(Br(v))$ such that $i(\overline{v})i(\overline{w}) = u_{\overline{v},\overline{w}}i(\overline{vw})$. We now define the cocycle action by $\alpha_{\overline{v}}(b) \coloneqq \alpha_{i(\overline{v})}(b) = \alpha_{v}(b)$ for $b \in Br(v)$. One can check that this is indeed well defined. We check now that the 2-cocycle identity is satisfied: 
       \begin{align*}
           \alpha_{\overline{v}}(u_{\overline{w},\overline{x}})u_{\overline{v},\overline{wx}} &= i(\overline{v})u_{\overline{w},\overline{x}}i(\overline{v})^{*}i(\overline{v})i(\overline{wx})i(\overline{vwx})^{*} = i(\overline{v}) i(\overline{w})i(\overline{x})i(\overline{vwx})^{*} \\ &= u_{\overline{v},\overline{w}}i(\overline{vw})i(\overline{x})i(\overline{vwx})^{*} = u_{\overline{v},\overline{x}}u_{\overline{vw},\overline{x}} 
       \end{align*}
       We also note that from the 2-cocycle identity it follows immediately that:  
        \begin{align*}
        \alpha_{\overline{v}} \circ \alpha_{\overline{w}}(b) = \alpha_{i(\overline{v})}\circ \alpha_{i(\overline{w})}(b) = \alpha_{i(\overline{v})i(\overline{w})}(b) = \alpha_{u_{\overline{v},\overline{w}}i(\overline{vw})}(b)  = \Ad (u_{\overline{v},\overline{w}})\alpha_{\overline{vw}}(b). \nonumber
         \end{align*}
        This gives a cocycle action $(\alpha,u)$ of $\cI$ on $B$. By Theorem \ref{Theorem A}, we have a discrete measured groupoid $\GI$ with unit space $(X,\mu)$ such that there is an isomorphism $\gamma: [[\GI]] \rightarrow \cI$ of csm inverse semigroups. As in Remark \ref{Rmk: actions of groupoids and inverse semigroups are the same}, we get a cocycle action $(\alpha,u)$ of $\GI$ on $(B_{x})_{x\in X}$ such that the crossed products $(B_{x})_{x \in X} \rtimes_{(\alpha,u)} \GI$ and $B \rtimes_{(\alpha,u)} \cI$ are isomorphic von Neumann algebras.
        Exploiting the inverse semigroup isomorphism, we know that for every element $U \in [[\GI]]$ there is a *-isomorphism $\alpha_{U}:B1_{s(U)} \rightarrow B1_{t(U)}$ given by $\alpha_{U}(b) = \alpha_{\gamma(U)}(b)$. Corresponding to every pair $U,V \in [[\GI]]$, the unitary element $u(U,V)$ is given by $u_{\gamma(U),\gamma(V)}$. \
        
        We claim that the cocycle action is free. If this were not true, we would have a bisection $V \in [[\Iso(\GI)]]$ with $s(V) = t(V) = E$ such that $V$ does not intersect $\cG^{(0)}$ and a partial isometry $u \in B$ with $s(u) = t(u) = p_{E}$ such that $\alpha_{V}(b) = ubu^{*}$ for all $b \in Bt(V)$. This implies that for the element $v = i(\gamma(V))$, we have that $vbv^{*} = ubu^{*}$ for all $b \in Br(v)$. This would imply that $vu^{*} \in \cZ(Br(v))$, and since $vu^{*} \in \cP$, this in turn implies that $vu^{*}$ is a central projection and hence in particular that $\overline{v}$ is an idempotent in $\cI$. However this is a contradiction as $\gamma^{-1}(\overline{v}) = V$ is not an idempotent. Letting $M_{1} = B \rtimes_{(\alpha,u)} \GI$ we are only left to show that there is an isomorphism $\theta: M_{1} \rightarrow M$ with $\theta(B) = B$. 

        As before let $\omega_{0}$ be a faithful normal state on $B$ and let $\omega = \omega_{0} \circ E$ and $\omega_{1} = \omega_{0} \circ E_{\GI}$ where $E_{\GI}$ is the canonical conditional expectation from the crossed product $M_{1}$ onto $B$.
        Let $\cH = L^{2}(M, \omega)$ and $\cH_{1} = L^{2}(M_{1},\omega_{1})$ be the GNS-representations corresponding to $(M,\omega)$ and $(M_{1},\omega_{1})$. Let $\cK_{1} \subset \cH_{1}$ be the dense subset consisting of the elements of $B$ and the partial isometries $u(V)$ for all $V \in [[\GI]]$. We define a map $\widetilde{U}$ on the dense subset $\cK_{1}$ by $\widetilde{U}(\widehat{u(V)}) \coloneqq \widehat{i(\gamma(V))}$ for all $V \in [[\cG]]$ and $\widetilde{U}(\widehat{b}) \coloneqq \widehat{b}$. Using a countable generating set and a similar argument in the previous parts of the proof, one can easily see that $E_{\GI}(u(V)) = E(i(\gamma(V)))$. As a consequence we immediately have that $\widetilde{U}$ is a norm preserving map between dense subsets of Hilbert spaces. Hence $\widetilde{U}$ extends to a unitary isomorphism between $\cH$ and $\cH_{1}$. We now define the *-isomorphism $\theta: M_{1} \rightarrow M$ by $\theta(x) \coloneqq \widetilde{U}x\widetilde{U}^{*}$. By construction $\theta$ is an isomorphism between $M_{1}$ and $M$ preserving the subalgebra $B$ and satisfying $E \circ \theta = E_{\GI}$. By Lemma \ref{Lemma: ergodic groupoid if and only if crossed product factor}, $M$ is a factor if and only if the groupoid is ergodic.
        \end{proof}

Recall that a discrete measured groupoid is called amenable if there is a norm 1 projection $m: L^\infty(\cG,\mu) \rightarrow L^{\infty}(X,\mu_0)$ such that $m(U \cdot f) = U \cdot m(f)$ for $U \in [[\cG]]$ and the canonical action of $[[\cG]]$ on the space of bounded measurable functions.
\medskip
\begin{proposition}
    \label{Prop: regular inclusion injective iff groupoid amenable}
    The discrete measured groupoid $\cG_{B \subset M}$ in Theorem \ref{Thm: groupoid from regular inclusion} is amenable if and only if $M$ is injective. 
\end{proposition}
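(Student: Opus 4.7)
The plan is to exploit the natural representation $M \subset \cB(\mathfrak{K})$ with $\mathfrak{K} = \int_X^{\oplus} \cH_x \otimes \ell^2(\cG^x)\, d\mu(x)$ used to construct the crossed product, together with the observation that $L^\infty(\cG,\mu_s)$, acting as multiplication operators on the $\ell^2(\cG^x)$ factor, commutes with $B$, which acts on the $\cH_x$ factor.

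For the forward direction, suppose $M$ is injective and fix a conditional expectation $\Phi \colon \cB(\mathfrak{K}) \to M$. Since $\Phi$ is $B$-bimodular and $L^\infty(\cG,\mu_s)$ commutes with $B$, the image $\Phi(L^\infty(\cG,\mu_s))$ lies in $B' \cap M$, which by Lemma \ref{Lemma: B satisfies RCC in crossed product M} equals $\cZ(B) = L^\infty(X,\mu)$. So $\Phi$ restricts to a ucp map $m := \Phi|_{L^\infty(\cG,\mu_s)} \colon L^\infty(\cG,\mu_s) \to L^\infty(X,\mu)$. I would then check that $m$ is an invariant mean: it is the identity on $L^\infty(X,\mu) \subset L^\infty(\cG,\mu_s)$ because the natural pullback $f \mapsto f \circ t$ identifies $L^\infty(X,\mu)$ with exactly the inclusion $L^\infty(X,\mu) = \cZ(B) \subset M \subset \cB(\mathfrak{K})$, on which $\Phi$ acts as the identity; and $m$ is $[[\cG]]$-equivariant because the $[[\cG]]$-action on $L^\infty(\cG,\mu_s)$ is implemented by conjugation by $u(V) \in M$, so $M$-bimodularity of $\Phi$ yields $\Phi(u(V) f u(V)^*) = u(V) \Phi(f) u(V)^* = \alpha_V(\Phi(f))$, where $\alpha_V$ restricted to $\cZ(B)$ is precisely the canonical $[[\cG]]$-action on $L^\infty(X,\mu)$.

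For the backward direction, assume $\cG$ is amenable. Note first that if $M$ is injective, then $B$ is automatically injective (via $E \colon M \to B$ composed with any conditional expectation $\cB(\mathfrak{K}) \to M$), so a.e.\ fiber $B_x$ is an injective factor; I therefore work under this additional hypothesis in this direction. Fix an invariant mean $m \colon L^\infty(\cG,\mu_s) \to L^\infty(X,\mu)$ and a measured field of conditional expectations $\psi_x \colon \cB(\cH_x) \to B_x$ (available from injectivity of the fibers). I would then assemble a conditional expectation $\Phi \colon \cB(\mathfrak{K}) \to M$ following the standard direct-integral analogue of the classical argument that $N \rtimes G$ is injective when $G$ is an amenable group acting on an injective $N$: compute Fourier coefficients of $T \in \cB(\mathfrak{K})$ with respect to the partial isometries $u(V)$ using the field $(\psi_x)_x$, and average along the $\ell^2(\cG^x)$-coordinates using $m$ to land the result in $M$.

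The main obstacle: the forward direction is almost immediate from the RCC once the commuting subalgebras $B$ and $L^\infty(\cG,\mu_s)$ in $\cB(\mathfrak{K})$ are identified. The harder part is the backward direction, where the measurable assembly of $\Phi$ from $(\psi_x)_x$ and $m$ must be carried out carefully over $X$ using the measurable selection framework of \cite{WoutersVaes24}; additionally, the implicit hypothesis that $B$ is injective must be addressed explicitly.
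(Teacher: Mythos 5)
Your forward direction ($M$ injective $\Rightarrow$ $\cG$ amenable) is exactly the paper's argument: restrict a conditional expectation $\cB(\mathfrak{K})\to M$ to the multiplication algebra $L^{\infty}(\cG)$ acting on the $\ell^{2}(\cG^{x})$ legs, use that this algebra commutes with $B$ so the image lands in $B'\cap M=\cZ(B)=L^{\infty}(X,\mu)$ by Lemma \ref{Lemma: B satisfies RCC in crossed product M}, and get equivariance from $M$-bimodularity. The backward direction is where you diverge: the paper does not construct the expectation by hand but simply invokes \cite[Theorem 4.2]{Yamanouchi94}, which is precisely the groupoid version of ``amenable acting on injective gives injective crossed product.'' Your plan to redo this by assembling Fourier coefficients with a measured field of expectations $\psi_{x}\colon\cB(\cH_{x})\to B_{x}$ and then averaging with the mean is the honest route, but be aware that even in the group case the naive combination fails because the $\psi_{x}$ are not equivariant for the action; the standard proofs go through an intermediate expectation onto $\cB(\cH)\rtimes\cG$ built from the mean alone (or argue on the commutant), and only then use injectivity of the fibers. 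That analytic content is exactly what the citation to Yamanouchi packages, so as written your second direction is a program rather than a proof.

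Your observation that injectivity of $B$ is needed for the ``amenable $\Rightarrow$ injective'' implication is correct and worth emphasizing: since $B\subset M$ is with expectation, $M$ injective forces $B$ injective, so for a non-injective $B$ (e.g.\ the trivial, hence amenable, groupoid acting on a non-injective factor) the stated equivalence fails. The paper's own proof quietly inserts the hypothesis ``injective von Neumann algebra $B$'' in the converse direction, so the proposition should be read with that standing assumption; you are right to flag it explicitly.
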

\begin{proof}
    If  $\cG \coloneqq \cG_{B \subset M}$ is amenable, then the crossed product $M$ is injective by \cite[Theorem 4.2]{Yamanouchi94}. We prove the converse now: suppose that $\cG$ is a discrete measured groupoid with unit space $(X,\mu)$ and $\alpha$ is an outer action of $\cG$ on an injective von Neumann algebra $B = (B_{x})_{x \in X}$ such that the crossed product $M$ is injective. Letting $(\cH_{x})_{x \in X}$ be a measured field of Hilbert spaces such that $B_{x} \subset \cB(\cH_{x})$, consider the measured field  of Hilbert spaces $(\cK_{x})_{x \in X}$ where $\cK_{x} = \cH_{x} \otimes \ell^{2}(\cG^{x})$. Let $\mathfrak{K}$ be the direct integral of this measured field. By injectivity, we know that there is a conditional expectation $P: \cB(\mathfrak{K}) \rightarrow M$. Consider the von Neumann algebra $A = L^{\infty}(\cG,\mu)$, and notice that $A$ has a faithful normal representation $\pi$ on $\cB(\mathfrak{K})$ given by $\pi(F)(G) = F \cdot G$ for $G \in L^{2}(\cG,\mu) \cong \int^{\oplus}_{X} \ell^{2}(\cG^{x})$. Consider $A$ as a von Neumann subalgebra of $\cB(\mathfrak{K})$. For any element $b \in B$, notice that since $L^{\infty}(\cG, \mu)$ acts only on the second components of the tensor product in $\mathfrak{K}$, we have: 
    \begin{align*}
        bP(f) = P(bf) = P(fb) = P(f)b \text{ for all } f \in A
    \end{align*}
    Hence for all $f \in A$, we have that $P(f) \in B' \cap M = \cZ(B) = L^{\infty}(X,\mu)$. Denoting $m = P|_{A}$, we are only left to check that $m(U \cdot f) = U \cdot m(f)$ for all bisections $U \in [[\cG]]$, but this is clear as $P$ is a conditional expectation.   
\end{proof}

The proof of Theorem \ref{Theorem B} is a combination of Lemma \ref{Lemma: B is regular in crossed product M}, Lemma \ref{Lemma: B satisfies RCC in crossed product M} and Theorem \ref{Thm: groupoid from regular inclusion}. 

\section{Strongly normal subequivalence relations}
\label{Sec: quotients of equivalence relations}

In this section we relate the correspondences established between groupoids and inverse semigroups in Section \ref{Sec: The inverse semigroup - groupoid correspondence} and between their actions and regular inclusions in von Neumann algebras in Section \ref{Sec: Cocycle actions and regular subalgebras} to the setting of equivalence relations. We first define the notion of measured fields of equivalence relations. As in \cite[Definition A.2.1]{Lisethesis}, we call a family of standard Borel spaces $(Y_x)_{x \in X}$ indexed over a standard probability space $(X,\mu)$ a \textit{measured field of standard Borel spaces} if there is a conull subset $X_0$ and a $\sigma$-algebra structure on the disjoint union $Y = \bigsqcup_x Y_x$ such that the projection map $\pi: Y \rightarrow X$ is Borel on $\pi^{-1}(X_0)$ and for all $x \in X_0$, the Borel structures on $\pi^{-1}(\{x\})$ and $Y_x$ coincide. A family of probability measures $(\mu_x \in P(Y_x))_{x \in X}$ will be called a Borel field of probability measures if for any Borel subset $E \subseteq Y$, the map $x \mapsto \mu_x(E \cap Y_x)$ is a Borel map. As usual such a family will be called a measured field if it forms a Borel field on a conull subset of $X$.

\medskip
\begin{definition}
    \label{Def: measured fields of equivalence relations}
    Let $(Z,\eta)$ be a standard probability space and $Y = (Y_{z})_{z \in Z}$ be a Borel field of standard Borel spaces. Suppose that $(\nu_{z})_{z \in Z}$ is a Borel field of probability measures on $(Y_{z})_{z \in Z}$ and $(\cR_{z})_{z \in Z}$ is a family of countable measured equivalence relations on $Y_{z}$, quasi invariant with respect to $\nu_{z}$. Then $(\cR_{z})_{z \in Z}$ is called a \textit{Borel field of countable equivalence relations} if the disjoint union $\cR = \bigsqcup_{z \in Z} \cR_{z}$ is a Borel subset of $Y \times Y$. As usual, such a family will be called a \textit{measured field of countable equivalence relations} if there is a common conull Borel domain $Z_{0} \subset Z$ such that the restriction is a Borel field of equivalence relations on a Borel field of standard Borel spaces quasi-invariant with respect to a Borel field of probability measures. 
\end{definition}

We remark here the definitions considerably simplify if we consider a Borel field of measures on a fixed standard Borel space $Y$. In that case however we need to define things differently for type I and non-type I equivalence relations. Our approach let's us define fields of equivalence relations in a more unified way.  

\medskip
\begin{definition}
    \label{Def: direct integral of equivalence relations}
    Suppose that $(Z,\eta)$ is a standard probability space, $Y = (Y_{z})_{z \in Z}$ is a Borel field of standard Borel spaces, $(\nu_{z})_{z \in Z}$ is a Borel field of probability measures on $Y_{z}$ and $(\cR_{z})_{z \in Z}$ is a Borel field of non-singular countable equivalence relations on the measured field $(Y_{z},\nu_{z})_{z \in Z}$. Consider the standard Borel space $Y$ and note that we can integrate the measures $\nu_{z}$ with respect to $\eta$ to get a $\sigma$-finite Borel measure $\nu$ on $Y$. Let $\cR $ be the equivalence relation on $Y$ consisting of the points $\{(y,y') \in \cR_{z} \text{ for some } z \in Z\}$ and note that $\cR$ is a Borel subset of $Y \times Y$ by Definition \ref{Def: measured fields of equivalence relations}. It is also clear from construction that $\cR$ has countable orbits and is quasi-invariant with respect to $\nu$. Then $\cR$ on $(Y,\nu)$ is called the \textit{direct integral of the equivalence relations} $\cR_{z}$. We shall denote the direct integral decomposition as $\cR = \int^{\oplus}_{Z} \cR_{z} \; d\eta(z)$ as in the case of von Neumann algebras. By abusing notation, we shall often drop the mention to the measure $\eta$ and simply write $\cR = \int^{\oplus}_{Z} \cR_{z}$ when the context is clear.
\end{definition}

The following result gives a counterpart of the ergodic decomposition theorem and was proved by Dang Ngoc Nghiem in \cite{Nghiem75}. Our formulation is similar to the one in \cite[Proposition 3.2]{Feldman-Moore-1} except that we do not assume the field of standard Borel spaces to be constant. We note here that there is a more general ergodic decomposition theorem for measured groupoids with a Haar system in \cite[Theorem 6.1]{Hahn78} which in the case of principal discrete measured groupoids, corresponds to our setting. As noted by Hahn in \cite{Hahn78}, the theorem was also proved independently by Ramsay in \cite{Ramsay80}. Recall that for an equivalence relation $\cR$ on $(X,\mu)$, the bounded measurable functions $L^{\infty}(X,\mu)$ form an abelian von Neumann algebra. As before, we denote by $L^{\infty}(X,\mu)^{\cR}$ the subalgebra of $\cR$-invariant functions.  
\medskip
\begin{theorem}\textup{(c.f. \cite{Nghiem75} , \cite[Proposition 3.2]{Feldman-Moore-1})}
    \label{Thm: equivalence relations admit an ergodic decomposition}
    Let $\cR$ be a countable measured equivalence relation on $(X,\mu)$. Let $(Z, \eta)$ be a standard Borel space such that the abelian von Neumann algebra $L^{\infty}(X,\mu)^{\cR}$ is isomorphic to $L^{\infty}(Z,\eta)$. Then $\cR$ is isomorphic to $\int^{\oplus}_{Z} \cR_{z} \; d\eta(z)$ for a measured field of countable non-singular equivalence relations $(\cR_{z})_{z \in Z}$ on a Borel field of standard Borel spaces and probability measures $(Y_{z},\nu_{z})_{z \in Z}$ such that (denoting the projection $\pi: Y \rightarrow Z$), any invariant Borel subset of $X$ is of the form $\pi^{-1}(A)$ for a Borel set $A \subset Z$ up to measure zero and such that for a.e. $z \in Z$, the equivalence relation $\cR_{z}$ is ergodic. 
\end{theorem}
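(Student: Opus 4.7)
The plan is to realize the desired decomposition by disintegrating $\mu$ over the Mackey-point realization of the inclusion $L^{\infty}(Z,\eta) \cong L^{\infty}(X,\mu)^{\cR} \hookrightarrow L^{\infty}(X,\mu)$. Since both $(X,\mu)$ and $(Z,\eta)$ are standard, this inclusion of von Neumann algebras is induced by an essentially unique Borel map $\pi: X \rightarrow Z$ with $\pi_{*}\mu = \eta$ (after discarding null sets). Applying the standard disintegration theorem yields a Borel field of probability measures $(\nu_{z})_{z \in Z}$, where $\nu_{z}$ is supported on $Y_{z} \coloneqq \pi^{-1}(\{z\})$, such that $\mu = \int_{Z} \nu_{z} \, d\eta(z)$. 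Taking $Y = \bigsqcup_{z} Y_{z}$ with the Borel structure inherited from $X$ via $\pi$ gives the required Borel field of standard Borel spaces and probability measures.

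Next I would define $\cR_{z} \coloneqq \cR \cap (Y_{z} \times Y_{z})$ and check that $\cR = \bigsqcup_{z} \cR_{z}$ up to a $\mu_{s}$-null set. The key point is that characteristic functions of saturations of Borel sets are $\cR$-invariant, hence pulled back from $Z$; therefore for a.e.\ $x \in X$, the entire $\cR$-orbit of $x$ is contained in $Y_{\pi(x)}$. This simultaneously shows that $\cR \subseteq \bigsqcup_{z}(Y_{z} \times Y_{z})$ up to a null set, identifies $\cR$ with $\bigsqcup_{z}\cR_{z}$ as a Borel subset of $Y \times Y$, and therefore verifies the Borel field condition of Definition \ref{Def: measured fields of equivalence relations}. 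That $\mu = \int_{Z}\nu_{z} \, d\eta$ makes this coincide with the direct-integral construction of Definition \ref{Def: direct integral of equivalence relations}.

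To prove quasi-invariance of $\cR_{z}$ with respect to $\nu_{z}$ for a.e.\ $z$, I would enumerate $\cR$ by a countable family of Borel bisections $\{B_{n}\}$ from its basis (using that $\cR$ is a discrete measured groupoid). Each $B_{n}$ gives a partial isomorphism $\phi_{n}$ preserving fibers of $\pi$ for a.e.\ starting point, and the Radon-Nikodym cocycle of $\phi_{n}$ with respect to $\mu$ disintegrates along $\pi$ to the Radon-Nikodym cocycles of $\phi_{n}|_{Y_{z}}$ with respect to $\nu_{z}$ for $\eta$-a.e.\ $z$. This follows from uniqueness of disintegration. Taking a countable intersection of conull sets in $z$, we obtain quasi-invariance of $\cR_{z}$ for a.e.\ $z$.

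The last and main step is ergodicity of a.e.\ $\cR_{z}$. Here I would argue by contradiction: if on a Borel set $Z_{1} \subset Z$ of positive $\eta$-measure, each $\cR_{z}$ admitted a non-trivial invariant set $E_{z} \subset Y_{z}$, a measurable selection (Jankov–von Neumann) would provide a Borel set $E \subset \pi^{-1}(Z_{1})$ such that $E \cap Y_{z} = E_{z}$, which is then $\cR$-invariant. The corresponding function $1_{E}$ lies in $L^{\infty}(X,\mu)^{\cR}$ but is not constant on a.e.\ fiber of $\pi$, contradicting $L^{\infty}(X,\mu)^{\cR} \cong L^{\infty}(Z,\eta)$ under the identification determined by $\pi$. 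The main obstacle will be carrying out this measurable-selection argument cleanly, i.e.\ ensuring that the ``non-trivial invariant sets'' $E_{z}$ can be picked in a jointly Borel way; this is handled by applying Jankov–von Neumann to the Borel subset of $Y \times [0,1]$ consisting of pairs $(y,t)$ with $\nu_{\pi(y)}([y]_{\cR_{\pi(y)}} \cap \{t\text{-sublevel of an enumerated invariant function}\}) \in (0,1)$, using that non-ergodicity is witnessed by a countable family of candidate invariant sets built from a countable dense subalgebra of $L^{\infty}(Y_{z})$. The statement about invariant Borel subsets of $X$ being of the form $\pi^{-1}(A)$ then follows immediately from the identification $L^{\infty}(X,\mu)^{\cR} = \pi^{*}L^{\infty}(Z,\eta)$ applied to $1_{B}$.
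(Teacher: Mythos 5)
The paper does not actually prove this statement: it is quoted as a classical result of Dang Ngoc Nghiem, in the formulation of Feldman--Moore (and Hahn/Ramsay for general measured groupoids), so there is no internal proof to compare yours against. Judged on its own terms, your skeleton --- the point realization $\pi: X \rightarrow Z$ of the inclusion $L^{\infty}(Z,\eta) \cong L^{\infty}(X,\mu)^{\cR} \subset L^{\infty}(X,\mu)$, the disintegration $\mu = \int_{Z}\nu_{z}\,d\eta(z)$, the observation that a.e.\ orbit is contained in a fiber (so $\cR = \bigsqcup_{z}\cR_{z}$ up to $\mu_{s}$-null sets), fiberwise quasi-invariance via a countable generating family of partial isomorphisms, and the identification of invariant sets with $\pi^{-1}(A)$ --- is the standard route and is essentially fine.

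The gap is in the ergodicity step, which is the only genuinely hard part. First, the set to which you propose applying Jankov--von Neumann is ill-defined: $[y]_{\cR_{\pi(y)}}$ is countable, so $\nu_{\pi(y)}$ of anything contained in it is $0$. Second, and more substantively, for a merely non-singular relation, non-ergodicity of $\cR_{z}$ is \emph{not} witnessed by ``candidate invariant sets built from a countable dense subalgebra'': saturation is not continuous for the symmetric-difference metric (the saturation of a set of arbitrarily small positive measure can be conull), and a countable dense subalgebra of the measure algebra need not contain a positive-measure subset of a given invariant set, so one cannot reduce the existence of a nontrivial invariant set to countably many Borel conditions in this naive way. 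The standard repair, compatible with the measured-fields framework used throughout the paper, is to perform the selection in the measurable field of Polish spaces $(\mathrm{MALG}(Y_{z},\nu_{z}))_{z \in Z}$ (realized, say, inside the field of spaces $L^{1}(Y_{z},\nu_{z})$): writing $\cR$ as the orbit relation of a countable group $\{\gamma_{n}\}$ of non-singular automorphisms preserving the fibers a.e., the set of pairs $(z,E)$ with $\gamma_{n}(E)=E$ modulo $\nu_{z}$-null sets for all $n$ and $0<\nu_{z}(E)<1$ is Borel in the total space of the field, so if its projection to $Z$ had positive measure, Jankov--von Neumann would produce a measurable section $z \mapsto E_{z}$, and the field of projections $(1_{E_{z}})_{z}$ assembles into an $\cR$-invariant element of $L^{\infty}(X,\mu)$ that is not fiberwise constant, contradicting $L^{\infty}(X,\mu)^{\cR} = \pi^{*}L^{\infty}(Z,\eta)$. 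With that replacement your argument closes; as written, the selection step does not.
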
 
\medskip
\begin{remark}
    \label{Rmk: measured fields of Cartan inclusions}
    Let $B = (B_{x})_{x \in X}$ be a measured field of von Neumann algebras and $A = (A_{x})_{x \in X}$ be a family of Cartan subalgebras. Then we shall call the family $(A_{x})_{x \in X}$ \textit{a measured field of Cartan subalgebras} and the family of inclusions $(A_{x} \subset B_{x})_{x \in X}$ a \textit{measured field of Cartan inclusions} if, after restricting to a suitable Borel domain, $A$ is a Borel subset of $B$ with the standard Borel structure in $B$. It can be checked that if $\cR = (\cR_{x})_{x \in X}$ is a measured field of ergodic equivalence relations on $(Y_{x}, \nu_{x})_{x \in X}$, then the corresponding Cartan inclusions $(L^{\infty}(Y_{x},\nu_{x}) \subset L(\cR_{x}))_{x \in X}$ is a measured field of Cartan inclusions. The converse is also true as observed in the proof of \cite[Theorem 1]{Feldman-Moore-2}. For convenience we state this for Cartan inclusions where the corresponding 1-cocycle vanishes, for example in the case of injective factors. Let $(A_{x} \subset B_{x})_{x \in X}$ be a measured field of Cartan inclusions. Then the measured field $(A_{x})_{x \in X}$ is isomorphic to $(L^{\infty}(Y_{x},\nu_{x}))_{x \in X}$ for a measured field of standard Borel spaces and probability measures $(Y_{x},\nu_{x})_{x \in X}$. Assume for a.e. $x \in X$, that $B_{x}$ is hyperfinite. Then we get a measured field of ergodic equivalence relations $(\cR_{x})_{x \in X}$ on $(Y_{x},\nu_{x})_{x \in X}$, a common Borel domain $X_{0}$ and a Borel section $\theta$ of $(\Iso(B_{x}, L(\cR_{x})))_{x \in X_{0}}$ such that $\theta|_{A}$ is a Borel section of  $\Iso((A_{x},L^{\infty}(Y_{x},\nu_{x}))_{x \in X_{0}}$. Of course, this does not rely at all on hyperfiniteness, we just added the condition to avoid messy definitions of `measured fields' of 1-cocycles.     
\end{remark}
\medskip
\begin{definition}
\label{Def: strong normality}
    Let $(X,\mu)$ be a standard Borel space and $\cS \subset \cR$ be equivalence relations on $(X,\mu)$. We denote by $\Aut_{\cR}(\cS)$ the full bisections of $\cR$ that normalize $\cS$, i.e., $\Aut_{\cR}(\cS) = \Aut(\cS) \cap [\cR]$. The sub-equivalence relation $\cS$ is called \textit{strongly normal} if there is a countable sequence of elements $\phi_{n} \in \Aut_{\cR}(\cS)$ such that the union of the graphs of $\phi_{n}$ is equal to $\cR$.  
\end{definition}
\medskip
\begin{remark}
\label{Remark: strong normality is enough on a countable collection}
    In various contexts throughout the literature (for example in \cite{feld-suth-zimm-88} and \cite{feld-suth-zimm-89}), subequivalence relations are defined to be strongly normal if the graphs of the elements in $\Aut_{\cR}(\cS)$ `generate' $\cR$. We clarify this notion here. Recall that in a von Neumann algebra with a separable predual, an arbitrary family of projections $(p_{i})_{i \in I}$ has a unique supremum denoted by $p = \bigvee_{i} p_{i}$. Now let $\mu_{s}$ be the measure on the measured equivalence relation $\cR$ by integrating the counting measure over $(X,\mu)$. Then any Borel subset of $\cR$ gives a projection in the abelian von Neumann algebra $L^{\infty}(\cR,\mu_{s})$. Consider the family of projections $\{p_{\phi} \; | \; \phi \in \Aut_{\cR}(\cS)\}$ and let $p$ be the supremum. Then $p$ is of the form $p_{\psi}$ for some Borel subset $\psi \subseteq \cR$. Strong normality of $\cS \subset \cR$ then means that $\psi = \cR$ up to measure zero. The reason this is equivalent to our definition is essentially because $\cR$ is a standard measure space. For each $n > 0$, we can find a finite subset $F_{n} \subset \Aut_{\cR}(\cS)$ such that denoting the set $\cR_{n} = \cR - \bigcup_{\phi \in F_{n}} \text{graph}(\phi)$, we have that $\mu_{s}(\cR_{n}) < \frac{1}{n}$. Now consider the countable set $F = \bigcup_{n} F_{n}$ and notice that up to measure zero, $\bigcup_{\phi \in F} \text{graph}(\phi) = \cR$. Thus we have that $\cR = \bigcup_{\phi \in \Aut_{\cR}(\cS)} \text{graph}(\phi)$ is equivalent to the existence of a countable subfamily $\phi_{n} \in \Aut_{\cR}(\cS)$ such that $\cR = \bigcup_{n} \text{graph}(\phi_{n})$.      
\end{remark}

The notion of strongly normal sub-equivalence relations corresponds to regular subalgebras in the context of von Neumann algebras. We make this precise now. Suppose $A \subset M$ is a Cartan inclusion isomorphic to $L^{\infty}(X,\mu) \subset L(\mathcal{R})$. Then of course there is a faithful normal conditional expectation from $E_{A}: M \rightarrow A$. Interestingly, for any intermediate von Neumann subalgebra $A \subset B \subset M$, Aoi showed in \cite[Theorem 1.1]{Aoi03} that there exists a unique faithful normal conditional expectation $E_{B}: M \rightarrow B$ and $A$ is a Cartan subalgebra of $B$. From this, it follows that intermediate von Neumann subalgebras $A\subset B \subset M$ are in one-to-one correspondence with subequivalence relations $\mathcal{S} \subset \mathcal{R}$ and the bijection is given by considering $B = L(\mathcal{S})$. We note that $B'\cap M \subset A' \cap M = A \subset B$ and hence the relative commutant condition $B'\cap M = \mathcal{Z}(B)$ is always satisfied in this situation. The following was fitst shown in \cite{Aoi03} for normal inclusions of ergodic equivalence relations. The general case was proven in \cite[Proposition 5.1]{Popa-Shlyakhtenko-Vaes20} (for II$_1$ factors) and \cite[Proposition 2.14]{Chakraborty24a} (in general). 
\medskip
\begin{proposition}
\label{Prop: Strongly normal subequivalence relation iff regular subalgbra}
Let $A \subset M$ be a Cartan inclusion isomorphic to $L^{\infty}(X) \subset L(\cR)$. Let $\cS$ be a subequivalence relation of $\cR$ and let $B = L(\cS)$. Then $B$ is a regular subalgebra of $M$ if and only if $\mathcal{S}$ is strongly normal in $\mathcal{R}$. 
\end{proposition}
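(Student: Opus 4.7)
The plan is to combine the Feldman-Moore Fourier decomposition of $M = L(\cR)$ over bisections of $\cR$ with the key observation that every unitary normalizing $B = L(\cS)$ automatically normalizes $A$, and hence corresponds to a unique full bisection of $\cR$. The forward direction is a direct decomposition argument, while the backward direction amounts to identifying the von Neumann algebra generated by $B$ and such normalizers with the space of elements of $M$ supported on the union of their graphs.

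For the forward direction, assume $\cS$ is strongly normal and fix $\phi_n \in \Aut_\cR(\cS)$ with $\cR = \bigsqcup_n \text{graph}(\phi_n)$ after disjointifying on null sets. Each unitary $u_{\phi_n} \in L(\cR)$ satisfies $u_{\phi_n} L(\cS) u_{\phi_n}^{\ast} = L(\cS)$, so $u_{\phi_n} \in \cN_M(B)$. Any partial isometry $u(V)$ associated to a bisection $V \in [[\cR]]$ decomposes as a sum $u(V) = \sum_n 1_{F_n} u_{\phi_n}$, where $F_n \subset X$ is the Borel set recording the piece of $V$ lying in $\text{graph}(\phi_n)$, and hence lies in $W^{\ast}(A, \{u_{\phi_n}\}) \subset W^{\ast}(B, \cN_M(B))$. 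Since such partial isometries generate $M$, this proves that $B$ is regular.

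For the backward direction, the key observation is that for any $v \in \cN_M(B)$,
\[
v A v^{\ast} = v \cZ(B) v^{\ast} = \cZ(v B v^{\ast}) = \cZ(B) = A,
\]
so $\cN_M(B) \subset \cN_M(A)$. Via the Feldman-Moore identification $\cN_M(A)/\cU(A) \cong [\cR]$, each $v \in \cN_M(B)$ corresponds to a unique $\phi_v \in [\cR]$, and the condition $v L(\cS) v^{\ast} = L(\cS)$ forces $\phi_v \in \Aut_\cR(\cS)$. Let $\cR_0$ be the supremum in the projection lattice of $L^{\infty}(\cR, \mu_s)$ of the graphs of all $\phi \in \Aut_\cR(\cS)$; one verifies that $\cR_0$ is itself a sub-equivalence relation of $\cR$ containing $\cS$, since $\Aut_\cR(\cS)$ is a group containing $[\cS]$. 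Consequently every element of the form $b u_\phi$ with $b \in B$ and $\phi \in \Aut_\cR(\cS)$ has Fourier support inside $\cR_0$, and so the $\ast$-algebra generated by $B$ and $\cN_M(B)$ together with its weak closure (by weak-continuity of the Fourier coefficients $x \mapsto E_A(u(V)^{\ast} x)$) sits inside the space of elements of $M$ supported on $\cR_0$. Regularity of $B$ then forces this space to coincide with $M$, so $\cR_0 = \cR$ modulo null sets, and Remark \ref{Remark: strong normality is enough on a countable collection} yields strong normality of $\cS$. The main obstacle is the support bookkeeping in this last step: one must check that $\cR_0$ is genuinely closed under $\cR$-composition so that the algebra it carries is really a subalgebra of $M$, and that the support condition is preserved under weak closure.
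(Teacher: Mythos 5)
Your forward direction is fine: disjointifying $\cR$ into graphs of the $\phi_n$, each $u_{\phi_n}$ normalizes $L(\cS)$, and every $u(V)$ for $V\in[[\cR]]$ is a sum of pieces $1_{F_n}u_{\phi_n}$ with $1_{F_n}\in A\subset B$, so $M=W^*(A,\{u(V)\})\subseteq W^*(B,\cN_M(B))$. (The paper itself does not prove this proposition but cites Aoi and \cite[Proposition 5.1]{Popa-Shlyakhtenko-Vaes20}; your forward argument matches the standard one.)

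The backward direction, however, has a genuine gap at its very first step. The chain $vAv^*=v\cZ(B)v^*=\cZ(vBv^*)=\cZ(B)=A$ rests on the identification $A=\cZ(B)$, which is false: since $A$ is maximal abelian in $B=L(\cS)$ one has $\cZ(L(\cS))=L^\infty(X)^{\cS}$, the algebra of $\cS$-invariant functions, and this equals $A$ only when $\cS$ is the identity relation. What your computation actually shows is that $v$ normalizes $L^\infty(X)^{\cS}$, which is much weaker. The asserted inclusion $\cN_M(B)\subset\cN_M(A)$ is simply false in general: take $\cS=\cR$ nontrivial and ergodic, so that $B=M$ and $\cN_M(B)=\cU(M)$, which is certainly not contained in the normalizer of the Cartan subalgebra. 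Consequently a general $v\in\cN_M(B)$ does \emph{not} correspond to a single $\phi_v\in[\cR]$, and the subsequent claim that the $*$-algebra generated by $B$ and $\cN_M(B)$ is supported on $\cR_0$ is unjustified, because you have not shown that elements of $\cN_M(B)$ are of the form $bu_\phi$ with $\phi\in\Aut_\cR(\cS)$. This missing step is the real content of the hard direction: one must show that each $v\in\cN_M(B)$ has Feldman--Moore support contained in $\cR_0$, e.g.\ by proving (as in \cite{Aoi03} and as invoked in the proof of Proposition \ref{Prop: quotient semigroup same as regular inclusion semigroup}) that $vAv^*$ and $A$, both Cartan in $B$, are conjugate by a unitary $b\in\cU(B)$ so that $bv\in\cN_M(A)\cap\cN_M(B)$, or by a direct computation with the Fourier coefficients $E_A(u_\psi^*v)$ showing that $\psi$ normalizes $\cS$ locally wherever the coefficient is nonzero. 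Your closing remarks about $\cR_0$ being a subequivalence relation and about weak closure are correct and would complete the proof once this step is supplied, but as written the argument does not go through.
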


In the remainder of this section we shall prove that given such a strongly normal inclusion of equivalence relations, one can construct the quotient groupoid which then admits a cocycle action on the subequivalence relation such that the semi-direct product gives the original equivalence relation. Moreover the groupoid is isomorphic to the groupoid constructed from the corresponding regular subalgebra inclusion. Recall that an isomorphism between two equivalence relations $\cR$ and $\cS$ on $(X,\mu)$ and $(Y,\nu)$ respectively is a nonsingular Borel isomorphism $\alpha: X \rightarrow Y$ such that $(x,y) \in \cR$ if and only if $(\alpha(x),\alpha(y)) \in \cS$. The next proposition can be proven exactly in the same way as \cite[Theorem A.2.25]{Lisethesis} and \cite[Proposition 8.9]{WoutersVaes24} and hence we omit a proof. 
\medskip
\begin{proposition}
    \label{Prop: full groups and isomorphisms are measured fields}
    Let $(Z,\eta)$ be a standard probability space and $(Y,\nu) = (Y_{z},\nu_{z})_{z \in Z}$ and $(X,\mu) = (X_{z},\eta_{z})_{z \in Z}$ be measured fields of standard Borel spaces and probability measures. Let $\nu$ and $\mu$ be the integrals of the fields of probability measures with respect to $\eta$. Let $\cR = (\cR_{z})_{z \in Z}$ and $\cS = (\cS_{z})_{z \in Z}$ be measured fields of ergodic countable measured equivalence relations quasi-invariant with respect to $\nu$ and $\mu$ respectively. Let $(A_{z} \subset B_{z})_{z \in Z}$ and $(C_{z} \subset D_{z})_{z \in Z}$ be the corresponding measured fields of Cartan inclusions. Then: 
    \begin{enumerate}
        \item The family $\Iso(\cR,\cS) = (\Iso(\cR_{z},\cS_{z}))_{z \in Z}$ of isomorphisms between the equivalence relations is a measured field of Polish spaces. Similarly $\Iso(A \subset B, C \subset D) = (\Iso(A_{z}\subset B_{z}, C_{z} \subset D_{z}))_{z \in Z}$ is a measured field of Polish spaces. Moreover the natural map $\Iso(\cR,\cS) \rightarrow \Iso(A \subset B, C \subset D)$ is a Borel map, after possibly shrinking the common Borel domain.  
        \item $\Aut(\cR) = (\Aut(\cR)_{z})_{z \in Z}$ and $[\cR] = ([\cR_{z}])_{z \in Z}$ are both measured fields of Polish groups. Similarly $\Aut(A \subset B) = (\Aut(A_{z} \subset B_{z}))_{z \in Z}$ and $\cN_{B}(A) = (\cN_{B_{z}}(A_{z}))_{z \in Z}$ is a measured field of Polish groups. Moreover the natural maps $\Aut(\cR) \rightarrow \Aut(A \subset B)$ and $[R] \rightarrow \cN_{B}(A)$ are Borel after possibly shrinking the common Borel domains.
    \end{enumerate}
\end{proposition}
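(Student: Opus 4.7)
My plan is to closely follow \cite[Theorem A.2.25]{Lisethesis} and \cite[Proposition 8.9]{WoutersVaes24}, which establish the analogous results on the von Neumann algebra side, and transfer everything along the Feldman--Moore embedding. Fiberwise, $\Iso(\cR_z, \cS_z)$ carries the Polish topology of convergence in measure, while $\Aut(\cR_z)$ and $[\cR_z]$ carry analogous Polish (group) topologies. Each of these embeds continuously into the corresponding space of morphisms of Cartan inclusions via $\alpha \mapsto \iota_z(\alpha)$, where $\iota_z(\alpha): L(\cR_z) \to L(\cS_z)$ is the *-isomorphism determined by $f \mapsto f \circ \alpha^{-1}$ on $L^{\infty}(Y_z, \nu_z)$ and $u_\phi \mapsto u_{\alpha \circ \phi \circ \alpha^{-1}}$ on the normalizing unitaries indexed by $\phi \in [\cR_z]$.

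First I would establish the measured field structure on the von Neumann side. By Remark \ref{Rmk: measured fields of Cartan inclusions}, identify $(A_z \subset B_z) \simeq (L^{\infty}(Y_z, \nu_z) \subset L(\cR_z))$ and $(C_z \subset D_z) \simeq (L^{\infty}(X_z, \mu_z) \subset L(\cS_z))$ as measured fields of Cartan inclusions. From \cite[Proposition 8.9]{WoutersVaes24}, $(\Iso(B_z, D_z))_z$ is a measured field of Polish spaces. The subfamily $(\Iso(A_z \subset B_z, C_z \subset D_z))_z$ would be obtained as a closed subfield: after picking a countable $\sigma$-weakly dense sequence $(a_{n,z})_n$ of $A_z$ varying Borel-measurably in $z$ (via Jankov--von Neumann, as in Remark \ref{Rmk: justifying borel domain for groupoids}), the condition $\theta(a_{n,z}) \in C_z$ for all $n$ is closed under pointwise ultraweak convergence. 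The same argument, applied to $\Aut(B_z)$ and to the closed subgroup $\cN_{B_z}(A_z) \subset \cU(B_z)$ (a measured field of Polish groups by \cite[Proposition 7.8]{WoutersVaes24}), would handle the von Neumann side of part 2.

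Next, to transfer the measured field structure along $\iota_z$, I would fix a countable symmetric basis $(\phi_{n,z})_n$ for $\cR_z$ varying Borel-measurably in $z$, which can be obtained via \cite[Proposition 3.1]{ber-chak-don-kim} applied fiberwise together with a measurable selection argument. Since $\iota_z(\alpha)$ is determined by its action on the countable generating set $\{f u_{\phi_{n,z}}\}$ and depends jointly Borel-measurably on $z$ and $\alpha$, Borel sections of $(\Iso(\cR_z, \cS_z))_z$ push forward to Borel sections on the Cartan side. This yields Borelness of the natural maps $\Iso(\cR, \cS) \to \Iso(A \subset B, C \subset D)$, $\Aut(\cR) \to \Aut(A \subset B)$ and $[\cR] \to \cN_B(A)$ after restricting to a common Borel domain.

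The hard part will be the reverse direction: constructing a compatible Borel structure on the equivalence relation side from the one on the Cartan side. My plan is to invoke the Lusin--Souslin theorem applied to the Borel graph $\{(z, \alpha, \theta): \iota_z(\alpha) = \theta\}$ in $Z \times \Iso(\cR, \cS) \times \Iso(A \subset B, C \subset D)$: since each $\iota_z$ is injective, the image is Borel and the fiberwise inverse is automatically Borel-measurable. Combined with Jankov--von Neumann, this would give the desired measured field structure on $(\Iso(\cR_z, \cS_z))_z$ consistent with the Cartan side. The analogous construction for $\Aut(\cR_z)$ and $[\cR_z]$ proceeds identically, completing the proof.
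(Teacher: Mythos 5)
The paper itself gives no written proof here: it states that the proposition ``can be proven exactly in the same way as'' \cite[Theorem A.2.25]{Lisethesis} and \cite[Proposition 8.9]{WoutersVaes24}, i.e.\ by running the same direct fiberwise argument (fix countable fiberwise-dense data, check the metrics are Borel in $z$, produce dense Borel sections by measurable selection) on the equivalence-relation side, and then checking separately that the comparison maps into the Cartan side are Borel. Your proposal instead proves the von Neumann side first and tries to \emph{pull the measured-field structure back} along the Feldman--Moore embedding $\iota_z$. The forward half of your argument (closed-subfield description of $\Iso(A_z\subset B_z, C_z\subset D_z)$ inside $\Iso(B_z,D_z)$, Borelness of $\alpha\mapsto\iota_z(\alpha)$ via a Borel family of bases $\phi_{n,z}$) is sound and consistent with the references.

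The gap is in your ``hard part.'' You invoke Lusin--Souslin on the graph $\{(z,\alpha,\theta):\iota_z(\alpha)=\theta\}$ inside $Z\times\Iso(\cR,\cS)\times\Iso(A\subset B,C\subset D)$, but this presupposes a standard Borel structure on the disjoint union $\Iso(\cR,\cS)=\bigsqcup_z\Iso(\cR_z,\cS_z)$ --- which is precisely the object you are trying to construct. The argument as written is circular. It can be repaired, but not ``automatically'' from injectivity of $\iota_z$: you would have to (i) show that the total image $\bigsqcup_z\iota_z(\Iso(\cR_z,\cS_z))$ is a Borel subset of the Cartan-side field (e.g.\ as the set of $\theta$ with $\theta(u_{\phi_{n,z}})$ lying in the closed subgroup $\{u_\psi:\psi\in[\cS_z]\}$ of $\cU(D_z)$, which uses that not every Cartan-pair isomorphism lies in the image --- they generically differ by a $\T$-valued $1$-cocycle); (ii) check that $\iota_z$ is a homeomorphism from the convergence-in-measure topology onto its image with the pointwise ultraweak topology; and (iii) exhibit fiberwise-dense sequences of Borel sections of the image, which a dense sequence of sections of the ambient Cartan-side field does not directly provide since those sections need not land in the image. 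Step (iii) is essentially the content of the direct argument the paper has in mind, so the detour through the von Neumann side does not actually save you this work.
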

\medskip
\begin{definition}
    \label{Def: Groupoid actions on fields of equivalence relations}
    Let $(Z,\eta)$ be a standard Borel space and $\cS = (\cS_{z})_{z \in Z}$ be a measured field of ergodic equivalence relations on a measured field of standard Borel spaces and quasi-invariant probability measures $Y = (Y_{z},\nu_{z})_{z \in Z}$. Let $\cG$ be a discrete measured groupoid with unit space $(Z, \eta)$ and $\cI$ be a csm inverse semigroup with $\Idem(\cI)$ isomorphic to $\cP(A)$ where $A = L^{\infty}(Z,\eta)$. Suppose $Z_{0} \subset Z$ is a common Borel domain for the measured fields $(\Iso(\cS_{s(g)},\cS_{t(g)}))_{g \in \cG}$ and $([\cS_{t(g)}])_{(g,h) \in \cG^{(2)}}$ and let $\cG_{0} = \cG|_{Z_{0}}$. Then a \textit{cocycle action} $(\alpha,\Phi)$ of $\cG$ on $\cS$ is given by a Borel section $\alpha$ of $(\Iso(\cS_{s(g)}, \cS_{t(g)}))_{g \in \cG_{0}}$ and a Borel section $\Phi$ of $([\cS_{t(g)}])_{(g,h) \in \cG_{0}^{(2)}}$ such that the following are satisfied: 
    \begin{enumerate}
        \item  $\alpha_{g} \circ \alpha_{h} = \Phi_{g,h} \circ \alpha_{gh}$ for all $(g,h) \in \mathcal{G}_{0}^{(2)}$. 
        \item $\alpha_{g} \circ \Phi_{h,k} \circ \alpha_{g^{-1}} = \Phi_{g,h} \circ \Phi_{gh,k} \circ \Phi_{g,hk}^{-1}$ for all $(g,h,k) \in \mathcal{G}_{0}^{(3)}$.
        \item $\alpha_{g} = 1$ when $g \in Z_{0}$.
        \item $\Phi_{g,h} = 1$ when $g \in Z_{0}$ or $h \in Z_{0}$. 
    \end{enumerate}
    Similarly a \textit{cocycle action} $(\alpha, \Phi)$ of $\cI$ on $\cS$ is given by a family of isomorphisms $\alpha_{v}: \cS|_{s(v)} \rightarrow \cS|_{r(v)}$ for all $v \in \cI$ and a family of partial isomorphisms $\Phi_{v,w} \in [\cS|_{r(v)}]$ such that by restricting domains whenever necessary, the following conditions are satisfied : 
    \begin{enumerate}
        \item $\alpha_{v} \circ \alpha_{w} = \Phi_{v,w} \circ \alpha_{vw}$ for all $v,w \in \cI$.
        \item $\alpha_{v} \circ \Phi_{w,x} \circ \alpha_{v}^{-1} = \Phi_{v,w} \circ \Phi_{vw,x} \circ \Phi_{v,wx}^{-1}$.
        \item For any idempotent $e$, we have $\alpha_{e} = 1_{s(e)}$.
        \item If $v$ or $w$ is an idempotent, then $\Phi_{v,w} = 1_{r(vw)}$.
    \end{enumerate}
\end{definition}

Now for such a cocycle action, we define a semi-direct product equivalence relation which is the counterpart of the crossed product in the setting of von Neumann algebras. 
\medskip
\begin{definition}
    \label{Def: semi-direct product equivalence relations}
    Let us assume the notation of Definition \ref{Def: Groupoid actions on fields of equivalence relations}. Then for a groupoid action, we define the \textit{semi-direct product equivalence relation} $\cR_{\cG} = \cS \rtimes_{(\alpha,\Phi)} \cG$ on the standard Borel space $(Y,\nu)$ by putting $y_{2} \sim_{\cR_{\cG}} y_{1}$ if and only if there exists $g \in \cG$ with $s(g) = \pi(y_{1})$ and $t(g) = \pi(y_2)$ and $(g \cdot y_{1}, y_{2}) \in \cS_{z_{2}}$. Similarly for an inverse semigroup action we define the \textit{semi-direct product equivalence relation} $\cR_{\cI} = \cS \rtimes_{(\alpha,\Phi)} \cI$ on $(Y,\nu)$ as the equivalence relation generated by the graphs of the elements $\{\alpha_{v} \; | \; v \in \cI \}$. As in Remark \ref{Remark: strong normality is enough on a countable collection}, this is equivalent to $\cR$ being the union of the graphs of $\alpha_{v_{n}}$ for a countable generating set $\{v_{n}\}$ of $\cI$. It can be checked that $\cR_{\cG}$ and $\cR_{\cI}$ are both nonsingular countable measured equivalence relations and that $\cR_{\cG} = \cR_{[[\cG]]}$.   
\end{definition}
\medskip
\begin{definition}
    \label{Def: outerness of groupoid action on equivalence relations }
    A cocycle action $(\alpha, \Phi)$ of a discrete measured groupoid $\cG$ with unit space $(Z, \eta)$ on a measured field of ergodic equivalence relations $(\cS_{z}, \nu_{z})_{z \in Z}$ on a measured field of standard Borel spaces $(Y_{z})_{z \in Z}$ is called \textit{free} if for a.e. $z \in Z$ an all $g \in \cG$, we have that $(y,\alpha_{g}(y)) \notin \cS_{s(g)}$ for a.e. $y \in Y$. 
\end{definition}

Notice that letting $(A_{z} \subset B_{z})_{z \in Z}$ be the corresponding measured field of Cartan inclusions, every such cocycle action $(\alpha,\Phi)$ of a discrete measured groupoid on a measured field of equivalence relations $(\cS_{z})_{z \in Z}$ induces a corresponding cocycle action of $\cG$ on the corresponding measured field of Cartan inclusions $(A_{z} \subset B_{z})_{z \in Z}$ essentially by Proposition \ref{Prop: full groups and isomorphisms are measured fields}. Indeed for a common Borel domain $Z_{0}$, the Borel section $\alpha$ induces a Borel section (which we still call $\alpha$ by abusing notation) of the Borel field $(\Iso(A_{s(g)} \subset B_{s(g)}, A_{t(g)} \subset B_{t(g)}))_{g \in \cG_{0}}$ and similarly the section $\Phi$ induces a Borel section $u$ of the Borel field $(\cN_{B_{t(g)}}(A_{t(g)}))_{(g,h) \in \cG_{0}^{(2)}}$ such that all the points in Definition \ref{2-cocycle on groupoids} are satisfied by $(\alpha,u)$. Hence a cocycle action $(\alpha,\Phi)$ as in Definition \ref{Def: Groupoid actions on fields of equivalence relations} induces a cocycle action $(\alpha,u)$ of $\cG$ on a field of Cartan inclusions $(A_{z} \subset B_{z})_{z \in Z}$. Keeping this in mind, we abstractly define a \textit{cocycle action} $(\alpha,u)$ of a discrete measured groupoid $\cG$ with unit space $(Z,\eta)$ on a measured field of Cartan inclusions $(A_{z} \subset B_{z})_{z \in Z}$ as:   
\begin{enumerate}
    \item a measured field $(\alpha_{g})_{g \in \cG}$ of *-isomorphisms $\alpha_{g}:B_{s(g)} \rightarrow B_{t(g)}$ such that $\alpha_{g}(A_{s(g)}) = A_{t(g)}$. We will call this a measured field of *-isomorphisms on Cartan inclusions $(A_{z} \subset B_{z})_{z \in Z}$
    \item a measured field of unitaries $\cG^{(2)} \ni (g,h) \mapsto u_{g,h} \in \cN_{B_{t(g)}}(A_{t(g)})$,
\end{enumerate}
such that the pair $(\alpha,u)$ satisfies the conditions of Definition \ref{2-cocycle on groupoids}. It is also immediate from definition that $(\alpha,\Phi)$ is free if and only if the corresponding cocycle action $(\alpha,u)$ is free.  
\medskip
\begin{definition}
    \label{Def: cocycle conjugacy for groupoid actions on equivalence relations}
    Let $\cG$ be a discrete measured groupoid with unit space $(Z,\eta)$ and $\cR = (\cR_{z})_{z \in Z}$ and $\cS = (\cS_{z})_{z \in Z}$ be two measured fields of ergodic equivalence relations. Suppose that $(A_{z} \subseteq B_{z})_{z \in Z}$ and $(C_{z}\subset D_{z})_{z \in Z}$ are the corresponding measured fields of Cartan inclusions. Two cocycle actions $(\alpha,u)$ and $(\beta,v)$ of $\mathcal{G}$ on the measured fields $(A_{z} \subset B_{z})_{z \in Z}$ and $(C_{z} \subset D_{z})_{z \in Z}$ respectively are said to be \textit{cocycle conjugate} if there exists a common Borel domain $Z_{0}$, a Borel section $\theta$ of $\Iso(A_{z} \subset B_{z}, C_{z} \subset D_{z})_{z \in Z_{0}}$ and a Borel section $w$ of $(\cN_{B_{t(g)}}(A_{t(g)}))_{g \in \cG_{0}}$ that satisfy:
 \begin{itemize}
     \item  $\theta_{t(g)} \circ \alpha_{g} \circ \theta^{-1}_{s(g)} = \Ad(w_{g}) \circ \beta_{g}$ for all $g \in \mathcal{G}_{0}$.
     \item  $\theta_{t(g)}(u(g,h)) = w_{g}\beta_{g}(w_{h})v(g,h)w^{*}_{gh}$ for all $(g,h) \in \mathcal{G}_{0}^{(2)}$.
 \end{itemize}
\end{definition}
\medskip
\begin{proposition}
    \label{Prop: semi-direct product of equivalence relation is crossed product of vNa's}
    Let $(Z,\eta)$ be a standard Borel space and $Y = (Y_{z})_{z \in Z}$ be a measured field of standard Borel spaces. Let $(\cS_{z},\nu_{z})_{z \in Z}$ be a measured field of ergodic equivalence relations on the field $(Y_{z})_{z \in Z}$ and let $\nu$ be the integral of $(\nu_{z})_{z \in Z}$ with respect to $\eta$. Let $\cG$ be a discrete measured groupoid with unit space $(Z, \eta)$ and $(\alpha,\Phi)$ be a free cocycle action of $\cG$ on the measured field $(\cS_{z},\nu_{z})_{z \in Z}$. Suppose $\cS$ on $(Y, \nu)$ is the direct integral of the field and $\cR = (\cS_{z})_{z \in Z} \rtimes_{(\alpha,\Phi)} \cG$. 
    
    Let $B_{z} = L(\cS_{z})$ and let $B$ denote the direct integral of the measured field of factors $(B_{z})_{z \in Z}$. Let $A_{z} = L^{\infty}(Y_{z},\nu_{z})$ be the Cartan subalgebra and let $A \subset B$ denote the direct integral of the field $(A_{z})_{z \in Z}$. Let $(\alpha,u)$ be the free cocycle action of $\cG$ on $B$ induced by $(\alpha,\Phi)$. Then letting $M \coloneqq B \rtimes_{(\alpha,u)} \cG$, $A$ is a Cartan subalgebra of $M$ and there is a *-isomorphism $\theta: L(\cR) \rightarrow M$ such that $\theta(b) = b$ for all $b \in B$. 
\end{proposition}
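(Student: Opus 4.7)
The plan is to reduce the statement to the already established correspondence between groupoid cocycle actions on fields of factors and regular inclusions, by first identifying $B$ with $L(\cS)$ for $\cS = \int^\oplus_Z \cS_z$. This identification follows from the fiberwise Feldman--Moore theorem together with the measured field framework, as indicated in Remark \ref{Rmk: measured fields of Cartan inclusions}: the direct integral $\int^\oplus_Z L(\cS_z)$ is canonically isomorphic to $L(\cS)$, and under this isomorphism $A = \int^\oplus_Z L^\infty(Y_z,\nu_z)$ corresponds to $L^\infty(Y,\nu)$, the Cartan subalgebra of $L(\cS)$ produced by Feldman--Moore. So $A \subset B$ becomes the Cartan inclusion for $\cS$.

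Next, I would construct $\theta: L(\cR) \to M$ by exploiting a symmetric basis $\cV = \{V_0 = Z, V_1, V_2, \ldots\}$ of $\cG$. Each $V_n$ gives an isomorphism $\alpha_{V_n}: \cS|_{s(V_n)} \to \cS|_{t(V_n)}$, and hence a partial Borel isomorphism $\phi_n$ of $(Y,\nu)$ whose graph is a bisection of $\cR$. Using the very definition of $\cR = \cS \rtimes_{(\alpha,\Phi)} \cG$ and a measurable-selection argument, every bisection $W$ of $\cR$ decomposes (up to measure zero) as $W = \bigsqcup_n W_n$, where $W_n$ is the composition of the graph of $\phi_n$ restricted to a Borel subset of $s(V_n)$ with a bisection of $\cS$. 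In particular, $\cS$ is strongly normal in $\cR$, and $L(\cR)$ is generated by $L(\cS)$ together with the partial isometries $u_{\phi_n}$. I would then define $\theta$ on generators by $\theta|_B = \mathrm{id}_B$ (via the identification above) and $\theta(u_{\phi_n}) = u(V_n) \in M$, extending via Fourier decomposition $\sum_n a_n u_{\phi_n} \mapsto \sum_n a_n u(V_n)$.

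The key computation is multiplicativity. In $L(\cR)$, one has $u_{\phi_m} u_{\phi_n} = u_{\phi_m \circ \phi_n}$; since $\alpha_{V_m}\circ\alpha_{V_n} = \Phi_{V_m,V_n}\circ\alpha_{V_mV_n}$, this equals the Feldman--Moore partial isometry in $L(\cS)$ corresponding to $\Phi_{V_m,V_n}$ times $u_{\alpha_{V_mV_n}}$. Under $B \cong L(\cS)$ this element is exactly the 2-cocycle $u(V_m, V_n) \in \cN_B(A)$ induced by $\Phi$, so the multiplication rules match the crossed-product relations in $M$. The $*$-structure and injectivity/surjectivity can then be verified via the unique Fourier expansion on each side. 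Alternatively, and perhaps more cleanly, one can mimic the end of the proof of Theorem \ref{Thm: groupoid from regular inclusion}: define a unitary between the GNS Hilbert spaces $L^2(L(\cR), \omega_0 \circ E_A)$ and $L^2(M, \omega_0 \circ E_A)$ that sends $\widehat{u_{\phi_n}}$ to $\widehat{u(V_n)}$, and implement $\theta$ by conjugation. That $A$ is Cartan in $M$ is then automatic, inherited from Feldman--Moore through $\theta$.

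The main obstacle is the bookkeeping of measurability in the bisection decomposition and, crucially, identifying the 2-cocycle $u_{V_m,V_n}$ defining $M$ with the element of $L(\cS) \cong B$ associated with $\Phi_{V_m,V_n}$ under the fieldwise Feldman--Moore isomorphism. This is where Proposition \ref{Prop: full groups and isomorphisms are measured fields} and the measured field framework of \cite{WoutersVaes24} become essential, allowing one to choose the identifications in a globally Borel way on a common Borel domain.
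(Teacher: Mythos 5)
Your argument is correct, and it arrives at the same underlying mechanism as the paper --- identify $B=\int^{\oplus}_{Z}L(\cS_{z})$ with $L(\cS)$ so that $A\subset B$ is the Feldman--Moore Cartan inclusion, match the partial isometries $u_{\phi_{V}}\in L(\cR)$ coming from graphs of the maps $\alpha_{V}$ with the crossed-product partial isometries $u(V)\in M$, and observe that the relations agree because the $2$-cocycle $u(V,W)$ is by definition the Feldman--Moore image of $\Phi_{V,W}$ --- but the execution differs. The paper does not pass through a basis of $\cG$ or a Fourier expansion at all: it writes down a single explicit unitary $U:L^{2}(\cR)\rightarrow\int^{\oplus}_{Z}L^{2}(\cS_{z})\otimes\ell^{2}(\cG^{z})$, namely $U(\xi)\bigl((y_{z},x_{z})\otimes g_{z}\bigr)=\xi(g_{z}^{-1}y_{z},x_{z})$, sets $\theta=\Ad(U^{*})$ (up to direction), and verifies $U\phi_{V}U^{*}=u(V)$ for \emph{every} bisection $V\in[[\cG]]$ by a two-line convolution computation; that $U$ is a well-defined unitary is exactly where freeness of $(\alpha,\Phi)$ enters, since it makes the parametrization $(g,(y,x))\mapsto(g^{-1}y,x)$ of $\cR$ essentially bijective. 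Your generators-and-relations route uses freeness in the corresponding place --- to see that the graphs of the $\phi_{n}$ are essentially disjoint in $\cR$, so that the Fourier expansions on both sides have matching, unique coefficients; you should make that disjointness explicit, since without it the map $\sum_{n}a_{n}u_{\phi_{n}}\mapsto\sum_{n}a_{n}u(V_{n})$ is not obviously well defined or isometric. Your closing GNS-unitary alternative is essentially the technique of the proof of Theorem \ref{Thm: groupoid from regular inclusion} transplanted here, and is a legitimate cleaner substitute; what the paper's explicit $U$ buys is that one never has to track the cocycle bookkeeping or choose a basis, while your algebraic route makes more transparent \emph{why} the two sets of relations coincide (the induced cocycle is the Feldman--Moore image of $\Phi$). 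Either way, the conclusion that $A$ is Cartan in $M$ is inherited through $\theta$ exactly as you say.
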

\begin{proof}
    Let $N = L(\cR)$, we will now construct a unitary operator realizing an isomorphism between the Hilbert spaces $\mathfrak{H} = \int^{\oplus}_{Z} L^{2}(\cS_{z}) \otimes \ell^{2}(\cG^{z})$ and $\mathfrak{K} = L^{2}(\cR)$. Let $U: \mathfrak{K} \rightarrow \mathfrak{H}$ be the map defined on $\xi \in \mathfrak{K}$ as follows: 
    \begin{align*}
        U(\xi)(((y_{z},x_{z}) \otimes g_{z})_{z \in Z}) = \int_{Z} \xi(g_{z}^{-1}y_{z}, x_{z}) \; d\eta(z) 
    \end{align*}
    where $g_{z} \in \cG^{z}$ and $(x_{z},y_{z}) \in \cS_{z}$ for all $z \in Z$. It is a routine check that $U(\xi)$ extends to a unitary operator giving an isomorphism between $\mathfrak{K}$ and $\mathfrak{H}$. Notice that for $\xi \in L^{2}(\cS) \subset \mathfrak{K}$, we have that $U(\xi) = \int^{\oplus}_{Z}\xi_{z}$. We claim that the map $x \mapsto UxU^{*}$ defines a *-isomorphism $\theta: N \rightarrow M$. Note that automatically by the previous observation, $\theta(b) = (b_{z})_{z \in Z}$ for all $b \in L(\cS)$ where $(b_{z})_{z \in Z}$ is the unique decomposition of $b$ in $\int^{\oplus}_{Z} L(\cS_{z})$. 
    
    Since $\cR$ is generated by the graphs of $\alpha_{V}$ for all $V \in [[\cG]]$, consider the elements in $[[\cR]]$ given by $\{(x,\alpha_{V}(x)) \; | \; x \in s(V)\}$ and let $\phi_{V}$ be their characteristic functions. We know that $\phi_{V} \in L^{2}(\mathfrak{K})$ acts as follows: 
    \begin{align*}
        \phi_{V}(\eta)(y,x) = (\phi_{V} * \eta) (y,x) = \sum_{w \in \cR(y)} \phi_{V}(y,w) \eta(w,x) = \eta(h^{-1}y,x)
    \end{align*}
    where $h \in V$ is the unique element with $t(h) = \pi(y)$. Let $u(V)$ be the corresponding partial isometries generating $M$. We claim that for all $V \in [[\cG]]$, $\theta(\phi_{V}) = u(V)$. Let us denote by $h_{z}$ the unique element in $V$ with $s(h_{z}) = z$. We now calculate this on a generating set as follows: 
    \begin{align*}
        u(V)U(\xi)((y_{z},x_{z}) \otimes g_{z}) &= U(\xi)((y_{z},x_{z}) \otimes h_{z}g_{z}) \\ = \int_{Z} \xi(h_{z}^{-1}g_{z}^{-1}y_{z},x_{z}) \; d\eta(z)
        &= \int_{Z}(\phi_{V} * \xi)(g_{z}^{-1}y_{z},x_{z}) \; d\eta(z) \\ = U\phi_{V}(\xi)&((y_{z},x_{z}) \otimes g_{z})
    \end{align*}
    Hence $\theta(\phi_{V}) = u(V)$ for all $V \in [[\cG]]$ and thus the unitary $U$ implements an isomorphism between dense *-subalgebras of $N$ and $M$, which extends to an isomorphism between the corresponding von Neumann algebras, as claimed.
\end{proof}

Combined with Lemma \ref{Lemma: ergodic groupoid if and only if crossed product factor} and Proposition \ref{Prop: Strongly normal subequivalence relation iff regular subalgbra}, this immediately gives the following corollary.
\medskip
\begin{corollary}
    \label{corollary:strong normality in semi-direct product}
     With the notation in Proposition \ref{Prop: semi-direct product of equivalence relation is crossed product of vNa's}, we have that $\cS \subset \cR$ is a strongly normal subequivalence relation. Moreover if $(\alpha,\Phi)$ is free, then $\cR$ is ergodic.  
\end{corollary}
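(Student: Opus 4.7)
The plan is to deduce the corollary directly from Proposition \ref{Prop: semi-direct product of equivalence relation is crossed product of vNa's}, which already transports the equivalence-relation picture into the von Neumann algebra framework, and then apply the three ingredients from Section \ref{Sec: Cocycle actions and regular subalgebras} (Lemma \ref{Lemma: B is regular in crossed product M}, Lemma \ref{Lemma: B satisfies RCC in crossed product M}, Lemma \ref{Lemma: ergodic groupoid if and only if crossed product factor}) together with Aoi's bijective correspondence between intermediate subalgebras and subequivalence relations encoded in Proposition \ref{Prop: Strongly normal subequivalence relation iff regular subalgbra}.

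For strong normality, I would first observe that the *-isomorphism $\theta: L(\cR) \to M = B \rtimes_{(\alpha,u)} \cG$ constructed in Proposition \ref{Prop: semi-direct product of equivalence relation is crossed product of vNa's} is the identity on $B = L(\cS)$, and therefore maps the inclusion $L(\cS) \subset L(\cR)$ isomorphically onto $B \subset M$. Lemma \ref{Lemma: B is regular in crossed product M} ensures that $B$ is a regular subalgebra of the crossed product $M$, so transporting regularity back through $\theta$ shows that $L(\cS)$ is a regular subalgebra of $L(\cR)$. Since $L^\infty(Y,\nu) \subset L(\cR)$ is a Cartan inclusion corresponding to $\cR$, Proposition \ref{Prop: Strongly normal subequivalence relation iff regular subalgbra} applies directly and yields that $\cS$ is strongly normal in $\cR$.

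For the ergodicity statement, the observation just before Definition \ref{Def: cocycle conjugacy for groupoid actions on equivalence relations} already notes that $(\alpha,\Phi)$ is free if and only if the induced cocycle action $(\alpha,u)$ on the measured field of Cartan inclusions $(A_z \subset B_z)_{z\in Z}$ is free in the sense of Section \ref{Sec: Cocycle actions and regular subalgebras}. Under this freeness, Lemma \ref{Lemma: ergodic groupoid if and only if crossed product factor} gives that $M = B\rtimes_{(\alpha,u)}\cG$ is a factor precisely when $\cG$ is ergodic, and via the isomorphism $L(\cR) \cong M$ this is equivalent to $\cR$ being ergodic. Thus, assuming $\cG$ is ergodic (as is implicit in the statement of the corollary), freeness of $(\alpha,\Phi)$ yields ergodicity of $\cR$.

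There is no substantial obstacle here: the real work has already been done in building the isomorphism $\theta$ of Proposition \ref{Prop: semi-direct product of equivalence relation is crossed product of vNa's} and in the von Neumann algebra lemmas. The only point that requires a brief check is the equivalence of freeness between $(\alpha,\Phi)$ and $(\alpha,u)$, which follows from the fact that an automorphism of $L(\cS_z)$ coming from an element of $\Aut(\cS_z)$ is inner (as a normalizer of the Cartan) exactly when the underlying equivalence-relation automorphism is trivial modulo $\cS_z$. This is essentially the content of the discussion surrounding Definition \ref{Def: outerness of groupoid action on equivalence relations }, and requires no new argument.
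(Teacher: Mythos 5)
Your proposal is correct and follows essentially the same route as the paper, which deduces the corollary by combining the isomorphism $\theta$ of Proposition \ref{Prop: semi-direct product of equivalence relation is crossed product of vNa's} with Lemma \ref{Lemma: B is regular in crossed product M}, Lemma \ref{Lemma: ergodic groupoid if and only if crossed product factor} and Proposition \ref{Prop: Strongly normal subequivalence relation iff regular subalgbra}. Your remark that the ergodicity conclusion additionally requires $\cG$ to be ergodic (left implicit in the statement) is a fair and accurate observation.
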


We have shown now that given a cocycle action of a discrete measured groupoid, we get a strongly normal inclusion of equivalence relations and a corresponding regular inclusion of von Neumann algebras. In Section \ref{Sec: Cocycle actions and regular subalgebras}, we proved a converse to this: a construction of a groupoid and a cocycle action from a regular inclusion. Now we show that the associated groupoid can be constructed directly from a strongly normal inclusion of equivalence relations as a `quotient groupoid' together with a cocycle action such that the semi-direct product is isomorphic to the original equivalence relation, as one would expect. Now we describe the construction of such a quotient groupoid. 
\medskip
\begin{construction}
    \label{Construction: quotient inverse semigroup and groupoid}   
    Let $\cR$ be an ergodic equivalence relation on $(X,\mu)$ and $\cS$ be a strongly normal subequivalence relation. Let $(Z, \eta)$ be a standard probability space such that $L^{\infty}(Z, \eta)$ is isomorphic to the abelian von Neumann algebra $L^{\infty}(X,\mu)^{\cS}$ and let $\cS = \int_{Z}^{\oplus} \cS_{z} \; d \eta(z)$ be the ergodic decomposition of $\cS$ such that $\cS_{z}$ is an ergodic equivalence relation on measured field of standard Borel spaces $(Y_{z},\nu_{z})$. We then have that $(X,\mu)$ is isomorphic to $(Y, \nu)$ with $\nu = \int_{Z}^{\oplus}\nu_{z} \; d\eta(z)$. By a \textit{partial automorphism} of $\cS$, we mean a nonsingular Borel bijection $\psi: \pi^{-1}(A)  \rightarrow \pi^{-1}(B)$ for non-null Borel subsets $A,B \subset Z$ such that $(x,x') \in \cS$ if and only if $(\psi(x),\psi(x')) \in \cS$. For such a partial automorphism $\psi$, we denote by $D(\psi)$ the domain $\pi^{-1}(A)$ and by $R(\psi)$ the range $\pi^{-1}(B)$. Let $\cP$ be the set of all partial automorphisms $\psi$ such that for a.e. $x \in D(\psi)$, we have that $(x, \psi(x)) \in \cR$. As in the case of regular inclusions, we say that two such partial isomorphisms are equivalent and denote $\phi \sim \psi$ if up to a null set they have the same source and range and there is a partial isomorphism $\omega \in [[\cS]]$ with $D(\xi) = R(\xi)$ such that $\phi = \omega \circ \psi$. Just as in the case of von Neumann algebras, one can check that this is indeed an equivalence relation. 

    We will now see that the quotient $\cI \coloneqq \cP/ \sim$ is a csm inverse semigroup. Exactly as in the von Neumann algebra setting, one can check that the equivalence relation $\sim$ is preserved under taking products and inverses in $\cP$ and consequently $\cI$ has an inverse semigroup structure. We shall denote the equivalence class of an element $\psi \in \cP$ in $\cI$ by $\overline{\psi}$. The set of idempotents in $\cP$ are precisely the `central projections', i.e., the maps $1_{\pi^{-1}(B)}$ for non-null Borel subsets $B \subset Z$. Note that two such idempotents are equivalent under $\sim$ if and only if they are projections to the same Borel subsets of $Z$ up to measure zero. Therefore $\cI$ is measured and the $\Idem(\cI)$ is isomorphic to $\cP(L^{\infty}(Z,\eta))$. For a sequence of elements $\psi_{n} \in \cP$ with orthogonal domains and ranges, the join $\psi = \bigvee_{n} \psi_{n}$ defined pointwise is still a partial Borel isomorphism in $\cP$ and hence $\mathcal{I}$ is complete. Now let $\phi_{n} \in \Aut_{\cR}(\cS)$ be a countable set such that the graphs generate $\cR$. Given any $\psi \in \cP$, we have that $\text{graph}(\psi) = \bigcup_{n} (\text{graph}(\psi) \cap \text{graph}(\phi_{n}))$ up to measure zero. Since $\phi$ and $\psi$ are both partial isomorphisms, by taking the projection to the first coordinate we have a Borel subsets $A_{n} \subset Z$ such that $\text{graph}(\psi) \cap \text{graph}(\phi_{n}) = \text{graph}(\phi_{n}|_{\pi^{-1}(A_{n})}) = \text{graph}(\psi_{n}|_{\pi^{-1}(A_{n})})$. Thus $\psi = \bigvee_{n} (\phi_{n} \circ 1_{\pi^{-1}(A_{n})})$ and since multiplication is preserved by $\sim$, we have that $\cI$ is separable and hence a csm inverse semigroup. 
\end{construction}
\medskip
\begin{definition}
    \label{Def: quotient semigroup and groupoid for strongly normal inclusion}
    Le $\cR$ be an ergodic equivalence relation on $(X,\mu)$ and $\cS \subset \cR$ be a strongly normal subequivalence relation. Let $(Z, \eta)$ be a standard Borel space such that $L^{\infty}(Z, \eta)$ is isomorphic to $L^{\infty}(X,\mu)^{\cS}$. The csm inverse semigroup $\cI_{\cS \subset \cR}$ with $\Idem(\cI_{\cS \subset \cR})$ isomorphic to $\cP(L^{\infty}(Z,\eta))$ constructed above will be called the \textit{quotient inverse semigroup} for the strongly normal inclusion. The associated discrete measured groupoid $\cG_{\cS \subset \cR}$ with unit space $(Z,\eta)$ by Theorem \ref{Theorem A} will be called the \textit{quotient groupoid} of the inclusion $\cS \subset \cR$.   
\end{definition}
\medskip
\begin{proposition}
\label{Prop: quotient semigroup same as regular inclusion semigroup}
    Let $\cS \subset \cR$ be a strongly normal inclusion (with $\cR$ ergodic) and let $\cI = \cI_{\cS \subset \cR}$ be the quotient inverse semigroup. Let $B = L(\cS) \subset L(\cR) = M$ be the corresponding regular inclusion and let $\cJ = \cI_{B \subset M}$ be the csm inverse semigroup associated to the inclusion. Then $\cI$ is isomorphic to $\cJ$ as csm inverse semigroups and consequently $\cG_{\cS \subset \cR}$ is isomorphic to $\cG_{B \subset M}$.  
\end{proposition}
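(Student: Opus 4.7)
The strategy is to construct an explicit isomorphism $\Psi : \cI \to \cJ$ of csm inverse semigroups; the isomorphism $\cG_{\cS \subset \cR} \cong \cG_{B \subset M}$ of quotient groupoids then follows by applying Theorem \ref{Theorem A}. The map $\Psi$ should send the class of a partial automorphism $\psi \in \cP_{\cS \subset \cR}$ with $D(\psi) = \pi^{-1}(A)$ and $R(\psi) = \pi^{-1}(C)$ to the class of the characteristic partial isometry $v_\psi := 1_{\mathrm{graph}(\psi)} \in L(\cR) = M$. One checks that $v_\psi \in \cP_{B \subset M}$: the source $v_\psi^* v_\psi = 1_{\pi^{-1}(A)}$ and range $v_\psi v_\psi^* = 1_{\pi^{-1}(C)}$ lie in $\cZ(B) = L^{\infty}(X)^{\cS}$ because $D(\psi)$ and $R(\psi)$ are $\cS$-invariant, and conjugation by $v_\psi$ translates the canonical generators of $B = L(\cS)$ by $\psi$, which preserves $\cS$ by hypothesis.

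For well-definedness and multiplicativity, if $\psi_1 = \omega \circ \psi_2$ with $\omega \in [[\cS]]$ having $D(\omega) = R(\omega) = R(\psi_2)$, then $v_{\psi_1} = v_\omega v_{\psi_2}$ with $v_\omega \in B$ a partial isometry whose source and range both equal $r(v_{\psi_2})$, so $v_{\psi_1} \sim v_{\psi_2}$ in $\cJ$. The identity $v_{\psi_1}v_{\psi_2} = v_{\psi_1 \circ \psi_2}$ is convolution of indicator functions in $L(\cR)$, and under the canonical identification of $\Idem(\cI)$ and $\Idem(\cJ)$ with $\cP(L^\infty(Z,\eta))$, $\Psi$ is the identity on idempotents and manifestly preserves countable joins. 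Injectivity is immediate: if $v_{\psi_1} = b v_{\psi_2}$ for $b \in B$ a partial isometry, then $b = v_{\psi_1} v_{\psi_2}^* = 1_{\mathrm{graph}(\psi_1 \circ \psi_2^{-1})}$, which lies in $L(\cS)$ only when the graph is contained in $\cS$ up to a null set, i.e., $\psi_1 \sim \psi_2$ in $\cI$.

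The main obstacle is surjectivity. By strong normality (Remark \ref{Remark: strong normality is enough on a countable collection}), I would choose a countable family $\{\phi_n\}_{n \geq 0} \subset \Aut_\cR(\cS)$ with $\phi_0 = \id$ whose graphs cover $\cR$; each $\phi_n$ sits in $\cP_{\cS \subset \cR}$ with full source and range. Mimicking the recursive reorganization from the proof of Proposition \ref{Prop: from inverse semigroup to a discrete measured groupoid} (the passage from $\mathcal{F}_{0}$ to $\mathcal{F}$), I would replace this family by $\{\psi_n\} \subset \cP_{\cS \subset \cR}$ obtained as restrictions of the $\phi_n$'s to $\cS$-invariant Borel subsets of $Z$, arranged so that $\{v_{\psi_n}\}$ is pairwise orthogonal under $E : M \to B$; the identity $E(v_{\psi_n}^* v_{\psi_m}) = 1_{\mathrm{graph}(\psi_n^{-1}\psi_m) \cap \cS}$ makes this condition concrete, and strong normality ensures the resulting family is a \emph{maximal} such family in $\cP_{B \subset M}$, since any $E$-orthogonal extension would produce a bisection of $\cR$ not captured up to $\cS$ by the $\phi_n$'s. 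Then for any $v \in \cP_{B \subset M}$, the arguments of Claim 4 and Claim 5 in the proof of Theorem \ref{Thm: groupoid from regular inclusion} produce orthogonal central projections $z_n \in \cZ(B)$ with $v z_n = v_{\psi_n} \cdot E(v_{\psi_n}^* v)$ and $\bigvee_n z_n = s(v)$, whence $\overline v = \bigvee_n \overline{v_{\psi_n}} \cdot \overline{z_n}$ in $\cJ$, placing $\overline v$ in the image of $\Psi$. Applying Theorem \ref{Theorem A} to $\Psi$ then delivers $\cG_{\cS \subset \cR} \cong \cG_{B \subset M}$.
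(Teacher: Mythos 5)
Your proof is correct and uses the same underlying map as the paper --- sending $\overline{\psi}$ to the class of the characteristic partial isometry $1_{\mathrm{graph}(\psi)}$ --- with the same well-definedness, multiplicativity and injectivity arguments; but your surjectivity argument is genuinely different. The paper handles surjectivity ``locally'': given $v \in \cP_{B \subset M}$, it invokes the structure theory behind Proposition \ref{Prop: Strongly normal subequivalence relation iff regular subalgbra} to find a unitary $b \in \cU(Br(v))$ with $bvAv^*b^* = Ar(v)$ for the Cartan subalgebra $A = L^\infty(X,\mu)$, and then applies Feldman--Moore to write $bv = u_\omega u_\phi$; this is short but leans on external results about normalizers of Cartan subalgebras. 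You instead argue ``globally'': you disjointify the strong-normality witnesses $\phi_n$ into an $E$-orthogonal family $\{v_{\psi_n}\}$ and feed it into Claims 4 and 5 of Theorem \ref{Thm: groupoid from regular inclusion} to decompose an arbitrary $v$ as $\bigvee_n \overline{v_{\psi_n}}\,\overline{z_n}$, which keeps the proof internal to the machinery already developed in Section \ref{Sec: Cocycle actions and regular subalgebras}. The one step you state too loosely is the maximality of $\{v_{\psi_n}\}$ among $E$-orthogonal families in $\cP_{B\subset M}$: the precise reason is that $E(v_{\psi_n}^* v)$ is the restriction to $\cS$ of the function $(y,x) \mapsto \hat v(\psi_n(y),x)$, so $E(v_{\psi_n}^* v)=0$ for all $n$ forces the Fourier transform $\hat v$ to vanish on $\bigcup_n \mathrm{graph}(\psi_n)\circ\cS$, and your recursive disjointification (using that the sets $\{x : \phi_n(x)\sim_{\cS}\phi_m(x)\}$ are $\cS$-saturated, since the $\phi_n$ normalize $\cS$) guarantees this union is all of $\cR$ up to null sets, whence $v=0$. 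With that computation spelled out, your argument is complete and arguably more self-contained than the paper's, at the cost of redoing a disjointification that the Cartan-normalizer structure theorem gives for free.
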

\begin{proof}
    Let us assume the same notation as in Construction \ref{Construction: quotient inverse semigroup and groupoid}. Suppose that $\mathcal{Q}$ is the set of partial isometries $v$ in $M$ such that $s(v)=v^{*}v$ and $r(v)= vv^{*}$ belong to $\mathcal{Z}(B)$ and $vBv^{*}=Br(v)$. Recall that $\cJ$ is the inverse semigroup arising from the inclusion $B \subset M$ after identifying $v \sim w$ in $\cQ$ if and only if $v = bw$ for some partial isometry $b \in B$ with $s(b)=t(b) = t(w)$. Let $\phi \in \mathcal{P}$, then the graph of $\phi$ denoted by $\text{graph}(\phi)$ is a Borel subset of $\mathcal{R}$. We denote the characteristic function of $\text{graph}(\phi)$ in $M_{b}(\mathcal{R})$ by $S_{\phi}$ and denote the corresponding bounded operator $L_{S_{\phi}}$ on $L^{2}(\mathcal{R},\mu)$ as $u_{\phi}$. We define a map $\Omega: \mathcal{P} \rightarrow \cQ$ by $\Omega(\phi) = u_{\phi}$. Since $\phi \in [[\mathcal{R}]]$, we have that $u_{\phi}$ is a partial isometry such that $u_{\phi}^{*}u_{\phi}$ and $u_{\phi}u_{\phi}^{*}$ are projections in $L^{\infty}(X,\mu)$ corresponding to the characteristic functions of the domain $D(\phi)$ and range $R(\phi)$. Since $\phi \in \Aut(\cS)$, we have that $u_{\phi}$ normalizes $L(\mathcal{S})$, and $\Omega$ is well defined. Now suppose $\phi = \omega \circ \psi$ for some $\omega \in [[\mathcal{S}]]$, then we have $u_{\phi} = u_{\omega \circ \psi} = u_{\omega} \circ u_{\psi}$. Since $u_{\omega}$ is a partial isometry in $L(\cS)$, we have that $u_{\phi}\sim u_{\psi}$. So we have that $\Omega$ passes to the quotients $\Omega: \cI \rightarrow \cJ$. We can check that $\Omega$ preserves multiplication and inverses as: 
    \begin{align*}
        \Omega(\overline{\phi \circ \psi}) = \overline{u_{\phi_{\circ \psi}}} &= \overline{u_{\phi}u_{\psi}} = \overline{u_{\phi}}\overline{u_{\psi}} = \Omega(\overline{\phi}) \circ \Omega(\overline{\psi}) \\
        \Omega(\overline{\phi^{-1}}) &= \overline{u_{\phi^{-1}}} = \overline{u_{\phi}^{*}} = \Omega(\overline{\phi})^{-1}   
    \end{align*}
    By definition $\Omega$ restricted to the idempotents induces the identity map on the lattices of idempotents. What remains to show that $\Omega$ is a lattice isomorphism is that it is indeed a bijection. Let $v$ be a representative in $\cQ$ for the element $\overline{v} \in \cJ$. Exactly as in the proof of Proposition \ref{Prop: Strongly normal subequivalence relation iff regular subalgbra}, we can get a unitary $b \in \cU(Br(v))$ such that $bv \in M$ is a partial isometry satisfying $bvAv^{*}b^{*} = Ar(v)$ for the Cartan subalgebra $A = L^{\infty}(X,\mu)$. Therefore $bv$ is of the form $u_{\omega}u_{\phi}$ for some partial isomorphism $\phi \in \cP$ and $\omega \in [[\cS]]$. Since $b \sim v$, we have that $\Omega(\overline{\phi}) = \overline{v}$, thus proving surjectivity. To check that $\Omega$ is injective, suppose $\phi$ and $\psi$ are elements of $\mathcal{P}$ such that $\Omega(\overline{\phi}) = \Omega(\overline{\psi})$, hence $u_{\phi}= bu_{\psi}$ for some partial isometry $b \in B$. We note then that $b = u_{\phi}u_{\psi}^{*} = u_{\phi \circ \psi^{-1}}$. Since $u_{\phi \circ \psi^{-1}} \in B$ and $B = L(\mathcal{S})$, we have that $\phi \circ \psi^{-1} \in [[\mathcal{S}]]$ and hence $\overline{\phi} = \overline{\psi}$.
\end{proof}

To close the circle of ideas, we state the following theorem which gives a complete correspondence between strongly normal inclusions and cocycle actions of groupoids, such that it canonically fits in the framework of regular inclusions with a Cartan subalgebra. This should be thought of as an equivalence relation version of Theorem \ref{Thm: groupoid from regular inclusion}. 
\medskip
\begin{theorem}
    \label{Thm: strong normailty correspondence with cocycle actions}
    Let $\cR$ be an ergodic equivalence relation on $(X,\mu)$. Let $\cS \subset \cR$ be a strongly normal subequivalence relation with $L^{\infty}(Z,\eta)$ isomorphic to $L^{\infty}(X,\mu)^{\cS}$. Then there is a free cocycle action $(\alpha, \Phi)$ of the quotient groupoid $\cG = \cG_{\cS \subset \cR}$ on the measured field of equivalence relations $(\cS_{z})_{z \in Z}$ obtained from the ergodic decomposition of $\cS$, such that the semi-direct product $(\cS_{z})_{z \in Z} \rtimes_{(\alpha,\Phi)} \cG$ is isomorphic to $\cR$. 
\end{theorem}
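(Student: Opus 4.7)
The proof plan is to reduce the claim to the von Neumann algebra setting already developed in Section~\ref{Sec: Cocycle actions and regular subalgebras} and then transfer the resulting data back to the equivalence relation side via the Feldman--Moore correspondence and Proposition~\ref{Prop: semi-direct product of equivalence relation is crossed product of vNa's}.

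First, set $B = L(\cS)$ and $M = L(\cR)$. By Proposition~\ref{Prop: Strongly normal subequivalence relation iff regular subalgbra}, the inclusion $B \subset M$ is regular; together with the canonical trace-preserving conditional expectation $E: M \to B$ and the relative commutant condition $B' \cap M = \cZ(B) = L^{\infty}(X,\mu)^{\cS} = L^{\infty}(Z,\eta)$ noted above Proposition~\ref{Prop: Strongly normal subequivalence relation iff regular subalgbra}, Theorem~\ref{Thm: groupoid from regular inclusion} produces a free cocycle action $(\alpha,u)$ of the groupoid $\cG_{B \subset M}$ on the measured field of factors $(L(\cS_{z}))_{z \in Z}$, together with an isomorphism $M \cong B \rtimes_{(\alpha,u)} \cG_{B \subset M}$ of extensions of $B$ which intertwines the conditional expectations. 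By Proposition~\ref{Prop: quotient semigroup same as regular inclusion semigroup}, $\cG_{B \subset M}$ is canonically isomorphic to $\cG \coloneqq \cG_{\cS \subset \cR}$, and I identify them from here on.

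Second, I would show that $(\alpha,u)$ can be chosen to descend to a cocycle action on the measured field of equivalence relations. Recall from Construction~\ref{Construction: quotient inverse semigroup and groupoid} and Proposition~\ref{Prop: quotient semigroup same as regular inclusion semigroup} that every class in $\cI_{\cS \subset \cR} \cong \cI_{B \subset M}$ admits a representative of the form $u_{\psi}$ for some partial automorphism $\psi$ of $\cR$ normalizing $\cS$, and that such $u_{\psi}$ automatically lies in $\cN_{M}(A)$ for the Cartan subalgebra $A = L^{\infty}(X,\mu)$. Picking a countable generating set $\{\psi_{n}\} \subset \cP$, whose existence is guaranteed by strong normality (Remark~\ref{Remark: strong normality is enough on a countable collection}), I choose the lift $i: \cI \to \cQ$ used in the proof of Theorem~\ref{Thm: groupoid from regular inclusion} to take values in $\{u_{\psi} : \psi \in \cP\}$; measurability is handled as in that proof by Jankov--von Neumann selection inside the Borel fields $(\Iso(\cS_{s(g)},\cS_{t(g)}))_{g \in \cG}$ and $([\cS_{t(g)}])_{(g,h) \in \cG^{(2)}}$, whose Borelness is provided by Proposition~\ref{Prop: full groups and isomorphisms are measured fields}. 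With this choice, after ergodic disintegration each $\alpha_{g}: L(\cS_{s(g)}) \to L(\cS_{t(g)})$ maps the Cartan subalgebra into the Cartan subalgebra and therefore restricts to an isomorphism $\alpha_{g} \in \Iso(\cS_{s(g)},\cS_{t(g)})$, while each $u(g,h) \in \cN_{L(\cS_{t(g)})}(A_{t(g)})$ implements an element $\Phi_{g,h} \in [\cS_{t(g)}]$. The cocycle identities in Definition~\ref{2-cocycle on groupoids} translate directly into those of Definition~\ref{Def: Groupoid actions on fields of equivalence relations}.

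Third, Proposition~\ref{Prop: semi-direct product of equivalence relation is crossed product of vNa's} applied to $(\alpha,\Phi)$ gives a canonical isomorphism of Cartan pairs $A \subset L((\cS_{z})_{z \in Z} \rtimes_{(\alpha,\Phi)} \cG) \cong A \subset B \rtimes_{(\alpha,u)} \cG$; composing with the isomorphism $A \subset B \rtimes_{(\alpha,u)} \cG \cong A \subset L(\cR)$ from Theorem~\ref{Thm: groupoid from regular inclusion}, I obtain an isomorphism of untwisted Cartan pairs. Feldman--Moore uniqueness \cite{Feldman-Moore-2} then forces an isomorphism of equivalence relations $\cR \cong (\cS_{z})_{z \in Z} \rtimes_{(\alpha,\Phi)} \cG$ that restricts to the identity on $\cS$. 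Freeness of $(\alpha,\Phi)$ is inherited from freeness of $(\alpha,u)$: if some non-unit isotropy bisection $V$ satisfied $\alpha_{V} \in [[\cS]]$, then the implementing element would lie in $\cN_{M}(A) \cap B$ and would exhibit $\alpha_{V}$ as inner on the corresponding factor, contradicting outerness given by Theorem~\ref{Thm: groupoid from regular inclusion}.

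The main obstacle is the second step: arranging the measurable selection of representatives so that the lift produced by Theorem~\ref{Thm: groupoid from regular inclusion} genuinely normalizes the Cartan subalgebra on a common Borel domain, thereby producing Borel sections $\alpha$ and $\Phi$ rather than merely their von Neumann algebraic counterparts. This is where the measured-field framework of \cite{WoutersVaes24} and the generating set provided by strong normality are essential.
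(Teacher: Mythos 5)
Your proposal is correct and follows essentially the same route as the paper: both arguments hinge on choosing lifts of the quotient inverse semigroup by Cartan-normalizing representatives $u_{\psi}$ with $\psi \in \cP$ (the paper lifts $\cI_{\cS\subset\cR}\to\cP$ and pushes forward along $\Omega$, while you lift $\cI_{B\subset M}\to\cQ$ constrained to the image of $\cP$ — equivalent choices by the surjectivity of $\Omega$ in Proposition \ref{Prop: quotient semigroup same as regular inclusion semigroup}), and both then conclude via Theorem \ref{Thm: groupoid from regular inclusion}, Proposition \ref{Prop: semi-direct product of equivalence relation is crossed product of vNa's} and Feldman--Moore uniqueness. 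Your freeness transfer and the identification of the measurable-selection step as the delicate point also match the paper's treatment.
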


\begin{proof}
    Let $\cP, \cQ, \cI, \cJ$ be as in Construction \ref{Construction: quotient inverse semigroup and groupoid} and let $\Omega: \cI \rightarrow \cJ$ be the lattice isomorphism in Proposition \ref{Prop: quotient semigroup same as regular inclusion semigroup}. For any $\psi \in \cP$, consider the map $\alpha_{\psi}: D(\psi) = R(\psi)$ given by $\alpha_{\psi}(x) = \psi(x)$. Exactly as in the proof of Theorem \ref{Thm: groupoid from regular inclusion}, we can take a lift $\cI \rightarrow \cP$ and the non-uniqueness of the lift gives a 2-cocycle action $(\alpha, \Phi)$ of $\cI$ on $\cS$. Now note that $\alpha_{\phi}$ induces the *-isomorphism $\alpha_{v}: Bs(v) \rightarrow Br(v)$ given by $b \mapsto vbv^{*}$ where $v = u_{\psi} \in \cQ$. Similarly for $\psi = u_{v}$ and $\xi = u_{w}$, we have that $\Phi_{\psi,\xi} \in [\cS_{R(\psi)}]$ induces an inner automorphism $\Ad(u_{v,w}): Br(v) \rightarrow Br(v)$ for a corresponding unitary $u_{v,w} \in Br(v)$. It follows from $(\alpha,u)$ being a cocycle action of $\cI$ that via the lattice isomorphism $\Omega$, the pair $(\alpha,u)$ is a cocycle action of $\cJ$ on $B$. Also, note that taking $A = L^{\infty}(X,\mu)$, $\alpha_{v}(A) = A$ for all $v \in \cJ$ and $\Omega_{v,w}$ normalizes $A$ for all $v,w \in \cJ$. Note that Theorem \ref{Thm: groupoid from regular inclusion} does not depend on the choice of lifts and using the theorem, we get an *-isomorphism $\theta: L(\cS) \rtimes_{(\alpha,u)} \cG \rightarrow L(\cR)$ such that $\theta(L(\cS)) = L(\cS)$ and $\theta(A) = A$, where $\cG$ is the unique groupoid from Proposition \ref{Prop: quotient semigroup same as regular inclusion semigroup}. By Proposition \ref{Prop: semi-direct product of equivalence relation is crossed product of vNa's}, we have another *-isomorphism $\theta': L(\cS \rtimes_{(\alpha,\Phi)} \cG) \rightarrow L(\cS) \rtimes_{(\alpha,u)} \cG$ that preserves $L(\cS)$ and $A$. Thus $\theta \circ \theta' : L(\cS \rtimes_{(\alpha,\Phi)} \cG) \rightarrow L(\cR)$ is a *-isomorphism fixing $A$. By the uniqueness part of \cite[Theorem 1]{Feldman-Moore-2}, we have an isomorphism between countable measured equivalence relations $\cS_{(\alpha,\Phi)} \rtimes \cG$ and $\cR$.    
\end{proof}

The proof of Theorem \ref{Theorem C} now follows from Corollary \ref{corollary:strong normality in semi-direct product}, Construction \ref{Construction: quotient inverse semigroup and groupoid} and Theorem \ref{Thm: strong normailty correspondence with cocycle actions}. 

\printbibliography

\end{document}